   \newcommand{\inBlackBoardFont}{\mathbb}}{
   \newcommand{\inBlackBoardFont}{\mathbf}}
   \newcommand{\inScriptFont}{\mathscr}}{
   \newcommand{\inScriptFont}{\mathfrak}}
\newcommand{\C}{{\inBlackBoardFont C}}
\newcommand{\R}{{\inBlackBoardFont R}}
\newcommand{\Acl}{{\inScriptFont A}}
\newcommand{\acl}{{\mathfrak a}}
\newcommand*{\Hcc}[1]{H^*_{#1}}
\newcommand*{\Hcb}[1]{(H^*_{#1})'}
   \providecommand{\coloneqq}{\mathrel{\mathop:}=}}
\newcommand{\dd}{\,d}
\newcommand{\ds}{\dd s}
\newcommand{\dx}{\dd x}
\DeclareMathOperator{\Real}{Re}
\DeclareMathOperator{\Lip}{Lip}
\newcommand*{\Hypo}[1]{\textup{($\mathbf{#1}$)}}
\newcommand{\parameter}{\,\cdot\,}
\newcommand*{\Norm}[1]{\mathopen\lVert#1\mathclose\rVert}
\newcommand*{\Abs}[1]{\mathopen\lvert#1\mathclose\rvert}
\newcommand*{\Curly}[1]{\mathopen\{#1\mathclose\}}
\newcommand*{\Scalr}[1]{(#1)}
\newcommand*{\Scalb}[1]{[#1]}
\providecommand{\bysame}{\leavevmode\hbox to3em{\hrulefill}\thinspace}
\numberwithin{equation}{section}
\numberwithin{figure}{section}
\newtheorem{theorem}{Theorem}[section]
\newtheorem{lemma}[theorem]{Lemma}
\newtheorem{proposition}[theorem]{Proposition}
\newtheorem{corollary}[theorem]{Corollary}
\theoremstyle{definition}
\newtheorem{definition}[theorem]{Definition}
\newtheorem{remark}[theorem]{Remark}
\newtheorem{example}[theorem]{Example}
\let\originalref\ref
\renewcommand*{\ref}[1]{\textup{\originalref{#1}}}
\newcommand*{\iref}[1]{\textup{(\ref{#1})}}
\newcommand*{\iiref}[1]{\textup{(\ref{#1})}}
\begin{document}
\title[Stability for Semilinear Parabolic Problems]{Stability
for Semilinear Parabolic Problems in $L_2$, $W^{1,2}$, and
interpolation Spaces}

\author{Pavel Gurevich}
\address{Pavel Gurevich\\
Free University of Berlin\\
Dept.\ of Mathematics (WE1)\\
Arnimallee 3\\
D-14195 Berlin\\
Germany;
Peoples' Friendship University of Russia\\
Miklukho-Maklaya 6\\
117198 Moscow\\
Russia}
\email{gurevichp@gmail.com}

\author{Martin V\"{a}th}
\address{Martin V\"{a}th\\
Mathematical Institute\\
Academy of Sciences of the Czech Republic\\
\v{Z}itn\'{a} 25\\
115\,67 Prague 1\\
Czech Republic}
\email{martin@mvath.de}

\thanks{The paper was partially written in the framework of the DFG projects
SFB\,647 and SFB\,910, and partially while the second author stayed at
the Academy of Sciences of the Czech Republic. Financial support by the DFG
and by the Academy of Sciences of the Czech Republic is gratefully
acknowledged.}

\subjclass[2010]{primary
34G25, 
35K55, 
35K90, 
37L15, 
secondary:
47A07, 
47A57, 
47A60, 
47B44, 
47D06, 
47J35; 
}

\keywords{asymptotic stability, existence, uniqueness, parabolic PDE,
strongly accretive operator, sesquilinear form,
fractional power, Kato's square root problem}

\begin{abstract}
An asymptotic stability result for parabolic semilinear problems
in $L_2(\Omega)$ and interpolation spaces is shown. Some known results
about stability in $W^{1,2}(\Omega)$ are improved for semilinear
parabolic mixed boundary value problems. The approach is based on
Amann's power extrapolation scales.
In a Hilbert space setting, a better understanding of this approach
is provided for operators satisfying Kato's square root problem;
as a side result some equivalent characterizations of these operators
are obtained.
\end{abstract}

\maketitle

\section{Introduction}

To the authors knowledge, results dealing with linear stability of
semilinear equations $u_t+Au=f(u)$ always make use of semigroup techniques.
In the simplest of these results for $C_0$-semigroups~\cite{Smoller},
the nonlinearity $f$ is assumed to act (and be e.g.\ differentiable)
in the same Banach space $H$ in which the semigroup acts.
In the case of heat equations or reaction-diffusion systems, i.e., when the
semigroup is (essentially) given by the Laplace operator, the classial
choices of the space $H$ are e.g.\ $W^{1,p}(\Omega)$ (or subspaces taking
some boundary conditions into account) or $L_p(\Omega)$.
However, in these cases, the nonlinearity given by a superposition operator
is differentiable if and only if it is affine, see e.g.~\cite{KrasTop}.

One possible solution of this problem is to work in spaces of continuous
functions, see~\cite{Lunardi}. However, this is not possible if one wants
to consider Sobolev or $L_p$ spaces. In this case, another approach can
be found in~\cite{Henry}, where the nonlinearity
is assumed to act only from a space $H_\alpha$ with $\alpha\in[0,1)$
into $H$ with $H_\alpha$ being the domain of a fractional power
of the (negative of the) generator of the semigroup.
This idea can be extended to somewhat more general interpolation
spaces, which in some cases avoids the problem that the space depends on
the operator (which is important for quasilinear problems), see~\cite{Amann}.
The classical folklore way to apply this result is to work in $H=L_p(\Omega)$,
and one obtains that $H_\alpha$ is for sufficiently large $p$
embedded into $C(\overline\Omega)$, hence differentiability of the
nonlinearity is not an issue anymore. However, one obtains
asymptotic stability only in the space $H_\alpha$ with large $\alpha>0$
since otherwise one ends up with very restrictive (or in case $\alpha=0$ even
degenerate) hypotheses about the nonlinearity $f$.

Results obtained in this way are usually not comparable with
instability results for e.g.\ obstacle problems where one sometimes
obtains instability in the $W^{1,2}(\Omega)$ or $L_2(\Omega)$ topology
by completely different methods. In order to compare the
problems with and without obstacles, we should thus know something about
their linear stability in $W^{1,2}(\Omega)$ and $L_2(\Omega)$.
Now the folklore way to do this is rather suboptimal.
For a stability result for the Laplace operator with
Neumann boundary conditions in the $W^{1,2}(\Omega)$ topology, we would need
to consider $H=L_2(\Omega)$ and get $H_{1/2}=W^{1,2}(\Omega)$, hence our
nonlinearity has to act from $W^{1,2}(\Omega)$ into $L_2(\Omega)$, which
(in space dimension $N>1$) amounts to a certain growth hypotheses on the
function generating the superposition operator; a corresponding result
for a reaction diffusion system was formulated e.g.\
in~\cite{VaethQuittnerProc}. Moreover, to get a stability
result in the $L_2(\Omega)$-topology in this way, one would have to choose
$\alpha=0$, that is, one would need to consider the nonlinearity acting from
$H$ into itself. As mentioned above, this means that one cannot consider
differentiable nonlinearities of superposition type at all.

Note that, in contrast, if one is interested in stationary solutions,
i.e.\ in solutions of the corresponding elliptic problem,
a natural approach is to consider
the superposition operator acting from $W^{1,2}(\Omega)$ into the antidual
space with respect to the $L_2$-scalar product, i.e., into the antidual space
$W^{1,2}(\Omega)'$. Since $L_p(\Omega)\subseteq W^{1,2}(\Omega)'$
for some $p<2$, this approach requires a milder growth condition than
if the nonlinearity acts from $W^{1,2}(\Omega)$ into $L_2(\Omega)$.
It would be nice to have also a corresponding result with weaker
growth hypothesis for the parabolic case.

It is perhaps not so well known that Amann's technique of
power extrapolation spaces can be used to solve both problems simultaneously.
One can obtain results about linear stability in $W^{1,2}(\Omega)$
under the ``natural'' acting conditions as in the elliptic problem
(that is, for subcritical growth of the nonlinearity),
thus relaxing the growth hypothesis supposed in
e.g.~\cite{VaethQuittnerProc}. Moreover, simultaneously, one can obtain
stability result in the $L_2(\Omega)$ topology which is really applicable for
superposition operators.

We note that stable manifolds using extrapolation spaces have also been
introduced in~\cite{FiedlerVishik} to obtain similarly a H\"{o}lder condition
with respect to an averaging parameter. For particular parabolic equations
similar approaches in $L_p(\Omega)$ with $p$ close to~$2$ have
been studied by K. Gr\"{o}ger, J. Rehberg, and others (see
e.g.~\cite{HallerRehberg}, particularly the proof of Lemma~5.3).
The authors thank J. Rehberg for pointing out references to
corresponding abstract results (personal communication).

The purpose of this paper is to carry out this technique, which is not
straightforward, since e.g.\ spectral properties of perturbed
operators do not carry over immediately to ``extrapolated'' operators.
We begin with a Banach space setting and then
concentrate on the case of an operator $A$ generated by a
``strongly accretive'' form in a Hilbert space. For such an operator, one
obtains an abstract extension $\Acl$ in a natural manner.
We will show that $\Acl$ is generated by a ``strongly accretive'' form
if and only if $A$ solves Kato's square root problem.
Moreover, this is the case if and only if $\Acl$ is the
``extrapolated'' operator of $A$ of order $-1/2$, and in this case all
extrapolated\slash interpolated operators (of any negative or positive order)
solve Kato's square root problem, too.

The plan of the paper is as follows. In Section~\ref{s:classical},
we recall (slight extensions of) the classical results related to
stability from~\cite{Henry}. In Section~\ref{s:results}, we extend
these results under milder hypotheses about the nonlinearities
in terms of Amann's extrapolated power scales. The rest of the paper is devoted
to the Hilbert space setting, where the technique is particularly fruitful.
In Section~\ref{s:Hilbert}, we clarify the relation between these extrapolated
power scales, strongly accretive operators, and Kato's square root problem.
Applications to semilinear parabolic problems are given in Section~\ref{s:pde};
in particular, stability of a reaction-diffusion system is obtained,
for which instability is known under obstacles~\cite{VaethQuittner}.
In the appendix, we briefly discuss a sufficient condition for an operator
to solve Kato's square root problem, which follows as a by-result of our
main theorem of Section~\ref{s:Hilbert}.

\section{Summary of Classical Results}\label{s:classical}

Our main interest lies in some dynamical assertions about stability
of equilibria for semilinear parabolic equations, which we formulate now.
We start by summarizing (slight extensions of) well-known
results which can be found in e.g.~\cite{Henry}.

Here and in the following, $(H,\Abs\parameter)$ denotes a complex Banach space,
and $A\colon D(A)\to H$ a (densely defined closed) sectorial operator in $H$
in the sense of~\cite{Henry}, that is, $-A$ generates an
analytic $C_0$-semigroup.
Moreover, we assume that the spectrum of $A$ is disjoint from $(-\infty,0]$.
The latter implies that $A$ is positive (of positive type)
in the sense of~\cite{Triebel} (or~\cite{Amann}), and it is actually no
loss of generality, since it can be arranged by adding a corresponding
multiple of the identity to $A$, if necessary.

Since $A$ is of positive type, one can define fractional power operators
$A^\alpha$, $\alpha\in\C$. Here, we use Komatu's characterization of
fractional power operators~\cite{KomatsuII}, which coincides with that
of~\cite[Section~1.15.1]{Triebel} and that of~\cite{Amann}.
For real $\alpha\ge0$, we denote by $H_\alpha$ the domain of
$D(A^\alpha)\subseteq H$, endowed with the norm
\begin{equation}\label{e:Halpha}
\Norm u_{H_\alpha}\coloneqq\Abs{A^{\alpha}u}\text,
\end{equation}
which is equivalent to the graph norm.

In this section, we fix $\alpha\in[0,1)$; the case $\alpha=0$, that is,
$H_\alpha=H$ is explicitly admissible.

Given a subset $U\subseteq\R\times H_\alpha$ and a function
$f\colon U\to2^H$ (we include multi-valued $f$ for completeness),
we consider the problem
\begin{equation}\label{e:semilinear}
u'(t)+Au(t)\in f(t,u(t))\text.
\end{equation}

\begin{definition}\label{d:strongH}
We call $u\in C([t_0,t_1),H)$ a \emph{strong\slash mild solution}
of~\eqref{e:semilinear} if there is a function $f_0\colon(t_0,t_1)\to H$ with
$f_0\in L_1((t_0,\tau),H)$ for every $\tau<t_1$ such that the following holds
for every $t\in(t_0,t_1)$: $(t,u(t))\in U$, $f_0(t)\in f(t,u(t))$, and
\begin{description}
\item[(strong solution)]
$u'(t)\in H$ exists in the sense of the norm of $H$,
$u(t)\in D(A)$, and $u'(t)+Au(t)=f_0(t)$.
\item[(mild solution)]
\begin{equation}\label{e:mild}
u(t)=e^{-(t-t_0)A}u(t_0)+\int_{t_0}^te^{-(t-s)A}f_0(s)\ds\text.
\end{equation}
\end{description}
\end{definition}

\begin{theorem}[Classical Regularity]\label{t:classreg}
Every strong solution is a mild solution, and the converse holds if $f_0$
in Definition~\ref{d:strongH} is locally H\"{o}lder continuous.
Moreover, if $u\colon[t_0,t_1)\to H$ satisfies~\eqref{e:mild} for all
$t\in(t_0,t_1)$ then
\begin{enumerate}
\item if $f_0\in L_1((t_0,\tau),H)$ for every $\tau\in(t_0,t_1)$ then
$u\in C([t_0,t_1),H)$.
\item if for every $\tau\in(t_0,t_1)$ there is $p>1/(1-\alpha)$ with
$f_0\in L_p((t_0,\tau),H_{-\gamma})$,
then $u\colon(t_0,t_1)\to H_\alpha$ is locally H\"{o}lder continuous, and
$u\in C([t_0,t_1),H_\alpha)$ if and only if $u(t_0)\in H_\alpha$.
\end{enumerate}
\end{theorem}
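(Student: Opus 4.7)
The plan is to follow the standard analytic-semigroup playbook (Henry, Pazy, Lunardi), relying throughout on two fundamental ingredients: the strong continuity of $e^{-tA}$ on $H$ and the smoothing estimate $\Norm{A^\beta e^{-tA}}\le C_\beta t^{-\beta}$ for $t>0$ and $\beta\ge 0$, which is available because $A$ is sectorial and positive.

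For \emph{strong $\Rightarrow$ mild}, I would fix a strong solution $u$ with selection $f_0$, and for each $t\in(t_0,t_1)$ differentiate, on $s\in(t_0,t)$, the identity
\begin{equation*}
\tfrac{d}{ds}e^{-(t-s)A}u(s)=e^{-(t-s)A}(u'(s)+Au(s))=e^{-(t-s)A}f_0(s).
\end{equation*}
Integrating from $t_0$ to $t$ yields~\eqref{e:mild}; the endpoint behaviour at $s=t_0$ uses strong continuity of the semigroup, while at $s=t$ it uses $u\in C([t_0,t_1),H)$ together with uniform boundedness of $\Norm{e^{-\tau A}}$ for small $\tau$. For the partial converse, assuming $f_0$ locally H\"older continuous of exponent $\theta>0$, I would write
\begin{equation*}
\int_{t_0}^t e^{-(t-s)A}f_0(s)\ds
=\int_{t_0}^t e^{-(t-s)A}(f_0(s)-f_0(t))\ds+A^{-1}(I-e^{-(t-t_0)A})f_0(t).
\end{equation*}
The second summand lies manifestly in $D(A)$, and the first does too because $\Norm{Ae^{-(t-s)A}}\le C/(t-s)$ combined with the H\"older bound $\Abs{f_0(s)-f_0(t)}\le L(t-s)^\theta$ produces an integrable singularity. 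Differentiating in $t$ and applying $A$ then delivers $u'(t)+Au(t)=f_0(t)$; this is the standard Henry/Pazy calculation.

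For part~(i), the semigroup term $t\mapsto e^{-(t-t_0)A}u(t_0)$ is continuous on $[t_0,t_1)$ by strong continuity, and the convolution is continuous by splitting $\int_{t_0}^t=\int_{t_0}^{t-\delta}+\int_{t-\delta}^t$, applying dominated convergence to the first piece and using $L_1$-absolute continuity (plus uniform boundedness of $\Norm{e^{-\tau A}}$) on the second. For part~(ii), I would apply $A^\alpha$ under the integral and invoke H\"older's inequality with conjugate exponents $p$ and $p/(p-1)$, together with $\Norm{A^\alpha e^{-(t-s)A}}\le C(t-s)^{-\alpha}$ (interpreted in the appropriate extrapolated scale so as to absorb the target space in the hypothesis). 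The assumption $p>1/(1-\alpha)$ is precisely the one that makes
\begin{equation*}
\int_{t_0}^t (t-s)^{-\alpha p/(p-1)}\ds
\end{equation*}
finite. Local H\"older continuity into $H_\alpha$ then follows by estimating $A^\alpha(u(t+h)-u(t))$ through the standard analyticity inequality $\Norm{A^\alpha(e^{-hA}-I)e^{-\tau A}}\le Ch^\beta\tau^{-\alpha-\beta}$ for a small auxiliary $\beta>0$, combined with a second H\"older estimate on the remainder $\int_t^{t+h}$. Continuity up to $t_0$ in the $H_\alpha$-topology clearly requires $u(t_0)\in H_\alpha$, and conversely this suffices because then $e^{-(t-t_0)A}u(t_0)\to u(t_0)$ in $H_\alpha$ as $t\downarrow t_0$ by strong continuity of the semigroup on $H_\alpha$.

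The main obstacle is the refined local H\"older estimate in $H_\alpha$ for part~(ii): since only an $L_p$ hypothesis (and not continuity) is available for $f_0$, one must delicately combine the analyticity estimate, H\"older's inequality, and the strict bound $p>1/(1-\alpha)$, and then show the resulting bound is uniform on compact subintervals of $(t_0,t_1)$ — which is what turns the pointwise estimate into genuine local H\"older regularity. Everything else is the routine splitting and dominated-convergence machinery familiar from analytic-semigroup theory.
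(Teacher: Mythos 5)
Your proposal is correct and follows essentially the same route as the paper, which simply cites Pazy for the strong/mild equivalence and reduces parts (i)--(ii) to the smoothing bound $\Norm{e^{-tA}}_{H\to H_\alpha}\le C_0t^{-\alpha}$, the standard weakly singular integral estimate (where $p>1/(1-\alpha)$ is exactly the condition making $(t-s)^{-\alpha p/(p-1)}$ integrable), and continuity of $t\mapsto e^{-tA}u_0$ in $H_\alpha$ at $t=0$ precisely when $u_0\in H_\alpha$. Your write-up just carries out explicitly the computations the paper delegates to the references.
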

\begin{proof}
The first assertions can be found as
e.g.~\cite[Corollary~4.2.2]{Pazy}.
The remaining assertions follow by a standard calculation for weakly singular
integrals (see e.g.~\cite[Satz~6.12]{VaethAnalysis} for the scalar case)
by using that $e^{-tA}\colon H\to H_\alpha$ is bounded for $t>0$
by $C_0/t^\alpha$ with $C_0$ independent of $t\ge0$,
that the function $g_{u_0}\colon[0,\infty)\to H_\alpha$,
$g_{u_0}(t)\coloneqq e^{-tA}u_0$ is locally H\"{o}lder continuous
on $(0,\infty)$ if $u_0\in H$ by~\cite[Theorem~2.6.3]{Pazy}, and
continuous at $0$ if $u_0\in H_\alpha$, because for
$u_1\coloneqq A^\alpha u_0$ there holds $A^\alpha g(t)=e^{-tA}u_1$, see
e.g.~\cite[Theorem~2.6.13(b,c)]{Pazy}.
\end{proof}

Concerning existence results, we will for simplicity only consider
single-valued $f$ in which case we also get uniqueness and regularity.
We say that $f$ satisfies a \emph{right local H\"{o}lder-Lipschitz} condition
if for each $(t_0,u_0)\in U$ there is a (relative) neighborhood
$U_0\subseteq[t_0,\infty)\times H_\alpha$ of $(t_0,u_0)$ with $U_0\subseteq U$
such that there are constants $L<\infty$ and $\sigma>0$ with
\begin{equation}\label{e:Lipschitz}
\Abs{f(t,u)-f(s,v)}\le L\cdot(\Abs{t-s}^\sigma+\Norm{u-v}_{H_\alpha})
\quad\text{for all $(t,u),(s,v)\in U_0$.}
\end{equation}
We call $f$ \emph{left-locally bounded into $H$} if for each $t_1>t_0$ and
each bounded $M\subseteq H_\alpha$ there is some $\varepsilon>0$ such that
$f\bigl(U\cap([t_1-\varepsilon,t_1)\times M)\bigr)$ is bounded in $H$.

\begin{theorem}[Classical Uniqueness, Existence, Maximal Interval]%
\label{t:classglobal}
\begin{enumerate}
\item\label{i:classlocal}
If $f\colon U\to H$ satisfies a right local H\"{o}lder-Lipschitz condition,
then for every $(t_0,u_0)\in U$ and $t_1\in(t_0,\infty]$ there is at most
one mild solution $u\in C([t_0,t_1),H_\alpha)$
of~\eqref{e:semilinear} satisfying $u(t_0)=u_0$.
\item
Moreover, such a strong solution exists with some $t_1>t_0$,
and if $f$ is left-locally bounded into $H$, then
some maximal $t_1>t_0$ can be chosen such that either $t_1=\infty$ or
$\Norm{u(t)}_{H_{\alpha}}\to\infty$ as $t\to t_1$ or the limit
$u_1=\lim_{t\to t_1^-}u(t)$ exists in $H_\alpha$ with $(t_1,u_1)\notin U$.
\end{enumerate}
\end{theorem}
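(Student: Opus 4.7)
The plan is to adapt the classical Picard-type argument to the $H_\alpha$-setting, relying throughout on the analytic-semigroup estimate $\Norm{A^\alpha e^{-tA}}_{H\to H}\leq C_\alpha/t^\alpha$ for $t>0$ (cf.\ Pazy, Theorem~2.6.13) and on the regularity information already collected in Theorem~\ref{t:classreg}.

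For uniqueness, let $u_1,u_2\in C([t_0,t_1),H_\alpha)$ both satisfy~\eqref{e:mild} with $u_1(t_0)=u_2(t_0)=u_0$. First pick $\tau>t_0$ so close to $t_0$ that $(s,u_i(s))\in U_0$ for all $s\in[t_0,\tau]$ and $i=1,2$, where $U_0$ is as in~\eqref{e:Lipschitz}. Subtracting the mild formulas and applying $A^\alpha$ under the integral gives
\begin{equation*}
\Norm{u_1(t)-u_2(t)}_{H_\alpha}\leq LC_\alpha\int_{t_0}^t(t-s)^{-\alpha}\Norm{u_1(s)-u_2(s)}_{H_\alpha}\ds
\end{equation*}
on $[t_0,\tau]$. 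Since $\alpha<1$, the singular Gronwall inequality forces $u_1\equiv u_2$ there, and a standard connectedness argument then propagates uniqueness over all of $[t_0,t_1)$.

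For existence, I would apply Banach's fixed-point theorem to
\begin{equation*}
(Tu)(t)\coloneqq e^{-(t-t_0)A}u_0+\int_{t_0}^te^{-(t-s)A}f(s,u(s))\ds
\end{equation*}
on an appropriate closed ball in $C([t_0,t_0+\delta],H_\alpha)$ centered at $t\mapsto e^{-(t-t_0)A}u_0$. Shrinking $\delta$, the local bound and Lipschitz constant supplied by~\eqref{e:Lipschitz} combine with the estimate $\Norm{A^\alpha e^{-\tau A}}\leq C_\alpha\tau^{-\alpha}$ to yield both self-mapping and the contraction factor $LC_\alpha\delta^{1-\alpha}/(1-\alpha)<1$. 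The fixed point is a mild solution in $C([t_0,t_0+\delta],H_\alpha)$, hence by Theorem~\ref{t:classreg}(2) locally H\"{o}lder continuous into $H_\alpha$ on $(t_0,t_0+\delta]$. The H\"{o}lder-Lipschitz estimate~\eqref{e:Lipschitz} then transfers to $t\mapsto f(t,u(t))$, so Theorem~\ref{t:classreg} upgrades $u$ to a strong solution.

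Finally, let $t_1$ be the supremum of those $\tau>t_0$ on which a strong solution exists; uniqueness patches these into a single maximal solution $u$. Assume $t_1<\infty$ and that $\Norm{u(t)}_{H_\alpha}$ remains bounded on $[t_0,t_1)$. Then left-local boundedness of $f$ yields some $M$ with $\Abs{f(t,u(t))}\leq M$ on $[t_1-\varepsilon,t_1)$. Choosing $t_0'\in(t_1-\varepsilon,t_1)$ and rewriting~\eqref{e:mild} based at $t_0'$, the ``initial'' term $e^{-(t-t_0')A}u(t_0')$ is continuous up to $t=t_1$ in $H_\alpha$ because $u(t_0')\in H_\alpha$, while a standard singular-convolution argument (split the integral near the endpoint and use dominated convergence together with integrability of $(t_1-s)^{-\alpha}$) shows that the convolution term also converges in $H_\alpha$ as $t\to t_1^-$. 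Hence $u_1\coloneqq\lim_{t\to t_1^-}u(t)$ exists in $H_\alpha$, and if $(t_1,u_1)\in U$ then the local existence step applied at $(t_1,u_1)$ contradicts maximality. The main obstacle is precisely this endpoint convergence in the $H_\alpha$-topology rather than in $H$, where $\alpha<1$ and the integrability of the singular kernel are essential.
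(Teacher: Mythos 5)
Your argument is the classical one from Henry's Theorems~3.3.3 and~3.3.4, which is precisely what the paper invokes (its proof is a citation plus the remark that local uniqueness implies global uniqueness by standard arguments). The singular Gronwall step for uniqueness, the contraction on a ball around $t\mapsto e^{-(t-t_0)A}u_0$ with contraction factor $LC_\alpha\delta^{1-\alpha}/(1-\alpha)$, and the bootstrap to a strong solution via Theorem~\ref{t:classreg} are all correct and match the intended route.

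The one place where your write-up does not yet deliver the stated trichotomy is the maximal-interval step. You assume $t_1<\infty$ \emph{and} that $\Norm{u(t)}_{H_\alpha}$ is bounded on $[t_0,t_1)$, and conclude that the limit exists with $(t_1,u_1)\notin U$. But the negation of ``$\Norm{u(t)}_{H_\alpha}\to\infty$'' only gives a sequence $t_n\to t_1^-$ along which the norm stays bounded; a priori it could oscillate, and then none of the three stated alternatives is established by your argument. To close this, show that boundedness along a sequence already forces boundedness on a left neighborhood of $t_1$: pick $R$ with $\Norm{u(t_n)}_{H_\alpha}\le R$, let $C_0$ bound $e^{-\tau A}$ on $H_\alpha$ for $\tau\in[0,1]$, invoke left-local boundedness of $f$ on the ball of radius $R'\coloneqq C_0(R+1)+1$ to get $\Abs{f(t,u(t))}\le K$ while $u$ stays in that ball, and observe that the mild formula based at $t_n$ then yields $\Norm{u(t)}_{H_\alpha}\le C_0R+KC_\alpha(t-t_n)^{1-\alpha}/(1-\alpha)<R'$ for $t-t_n<\delta$ with $\delta$ depending only on $R,K,C_0,\alpha$; since $t_n+\delta>t_1$ for large $n$, the solution is bounded near $t_1$ after all. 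With that supplement your endpoint-convergence argument applies and the trichotomy is complete.
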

\begin{proof}
The result is shown in the proofs of~\cite[Theorems~3.3.3 and~3.3.4]{Henry}.
We recall that local uniqueness implies global uniqueness by
standard arguments.
\end{proof}

Theorem~\ref{t:classglobal} is only the motivation for the subsequent
classical asymptotic stability result.

We formulate this result even for multi-valued $f\colon U\to2^H$, since the
proof is practically the same as in the classical single-valued case.
We call $u_0\in D(A)$ an \emph{equilibrium} of~\eqref{e:semilinear}
if $0\in Au_0+f(t,u_0)$ for all $t>0$ and make the following hypothesis:

\begin{description}
\item[\Hypo{B}] Let $u_0$ be an equilibrium, $U_1\subseteq H_\alpha$ an
open neighborhood of $u_0$ and $[0,\infty)\times U_1\subseteq U$.
Assume that there is a bounded linear map $B\colon H_\alpha\to H$ such that
the function $g(t,u)\coloneqq f(t,u_0+u)+Au_0-Bu$ satisfies
\[\lim_{\Norm u_{H_\alpha}\to0}\,
\frac{\sup\bigl\{\Abs v:v\in g\bigl((0,\infty)\times\Curly u\bigr)\bigr\}}%
{\Norm u_{H_\alpha}}=0\text.\]
(Here, we use the convention $\sup\emptyset\coloneqq0$.)
\end{description}

If $f(t,\parameter)$ is single-valued in a neighborhood of $u_0$,
then $Au_0=-f(t,u_0)$ so that hypothesis~\Hypo{B} means that
$f(t,\parameter)$ is Fr\'echet differentiable at $u_0$ with derivative $B$,
uniformly with respect to $t\in[0,\infty)$. We denote by $\sigma(A-B)$
the spectrum of $A-B$ in $H$.

\begin{theorem}[Classical Asymptotic Stability]\label{t:classass}
Under hypothesis~\Hypo{B}, assume that there is $\lambda_0>0$ such that
$\sigma(A-B)\subseteq\Curly{\lambda\in\C:\Real\lambda>\lambda_0}$.

Then there exist $M_1,M_2>0$ such that if
$t_1>t_0\ge0$ and if $u\in C([t_0,t_1),H_\alpha)$ is a
mild solution of~\eqref{e:semilinear} on $[t_0,t_1)$ with
$\Norm{u(t_0)-u_0}_{H_\alpha}\le M_1$,
then $u$ satisfies the asymptotic stability estimate
\begin{equation}\label{e:asymptest}
\Norm{u(t)-u_0}_{H_\alpha}\le
M_2e^{-\lambda_0(t-t_0)}\Norm{u(t_0)-u_0}_{H_\alpha}
\quad\text{for all $t\in[t_0,t_1)$.}
\end{equation}
If $f$ satisfies in addition the hypotheses of part~\iref{i:classlocal} of
Theorem~\ref{t:classglobal}, then additionally for every $t_0\ge0$
and every $u_1\in H_\alpha$ with $\Norm{u_1-u_0}\le M_1$ a unique
strong solution $u\in C([t_0,\infty),H_\alpha)$ with $u(t_0)=u_1$ exists
and satisfies~\eqref{e:asymptest} with $t_1=\infty$.
\end{theorem}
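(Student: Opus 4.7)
The plan is to reduce the problem to an a priori estimate for the linearization $L \coloneqq A-B$ and then close it via Henry's weakly singular Gronwall inequality. Setting $v(t) \coloneqq u(t)-u_0$ and subtracting the equilibrium identity from~\eqref{e:mild}, the mild formulation rewrites as
\[
v(t)=e^{-(t-t_0)L}v(t_0)+\int_{t_0}^{t}e^{-(t-s)L}g_0(s)\ds,
\]
for some selection $g_0(s)\in g(s,v(s))$. This identity follows either from Duhamel's formula applied to the bounded perturbation $B\colon H_\alpha\to H$, or equivalently by inserting the term $\pm Bv$ into~\eqref{e:mild} and invoking the analytic semigroup perturbation series in~\cite[Section~3.2]{Henry}.

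Since $B\in\mathcal{L}(H_\alpha,H)$ and $\alpha<1$, standard perturbation theory shows that $L$ is sectorial with domain $D(A)$, and that $L^\alpha$ and $A^\alpha$ share the same domain with equivalent graph norms. The spectral hypothesis $\sigma(L)\subseteq\Curly{\lambda\in\C:\Real\lambda>\lambda_0}$ then permits a deformation of the defining Dunford contour for $e^{-tL}$ into a sector lying strictly to the right of the vertical line $\Real\lambda=\lambda_0$, producing constants $C_0,C_\alpha$ such that
\[
\Norm{e^{-tL}}_{H\to H}\le C_0e^{-\lambda_0 t}\quad\text{and}\quad\Norm{e^{-tL}}_{H\to H_\alpha}\le C_\alpha t^{-\alpha}e^{-\lambda_0 t}
\]
for all $t>0$. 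Establishing the exponential rate with the correct constant $\lambda_0$ is the main technical point, since $B$ may enlarge the spectral angle and one has to verify that the resulting sector still admits the desired resolvent bounds.

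Given $\eta>0$ to be chosen, hypothesis~\Hypo{B} furnishes $\delta>0$ such that whenever $\Norm{w}_{H_\alpha}\le\delta$, every $g_0\in g((0,\infty)\times\Curly{w})$ satisfies $\Abs{g_0}\le\eta\Norm{w}_{H_\alpha}$. Writing $\varphi(t)\coloneqq e^{\lambda_0(t-t_0)}\Norm{v(t)}_{H_\alpha}$ and inserting the semigroup bounds into the mild formula, a short computation shows that as long as $\Norm{v(s)}_{H_\alpha}\le\delta$ on $[t_0,t]$,
\[
\varphi(t)\le C_\alpha\Norm{v(t_0)}_{H_\alpha}+C_\alpha\eta\int_{t_0}^{t}(t-s)^{-\alpha}\varphi(s)\ds.
\]
Henry's singular Gronwall lemma (e.g.~\cite[Lemma~7.1.1]{Henry}) then yields $\varphi(t)\le M_2\Norm{v(t_0)}_{H_\alpha}$ with $M_2$ independent of $t$ provided $\eta$ is small enough. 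A choice $M_1\le\delta/M_2$ ensures $\Norm{v(t)}_{H_\alpha}\le\delta$ for all $t\in[t_0,t_1)$ by a standard continuation/bootstrap argument, which vindicates the a priori hypothesis and delivers~\eqref{e:asymptest}.

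For the last statement, the right local H\"{o}lder--Lipschitz condition combined with Theorem~\ref{t:classglobal} produces, for each $u_1$ with $\Norm{u_1-u_0}_{H_\alpha}\le M_1$, a unique strong solution on some maximal interval $[t_0,t_{\max})$. The estimate~\eqref{e:asymptest} confines $u(t)$ to a bounded neighborhood of $u_0$ in $H_\alpha$ contained in $U_1\subseteq U$, so none of the blow-up alternatives in Theorem~\ref{t:classglobal} can materialize in finite time, forcing $t_{\max}=\infty$.
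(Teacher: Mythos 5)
Your overall strategy is exactly the one the paper intends: its proof is a one-line reference to~\cite[Theorem~5.1.1]{Henry}, and your write-up is a faithful expansion of that argument (Duhamel with respect to the linearization $L=A-B$, exponential semigroup bounds from the spectral hypothesis, smallness of the remainder supplied by~\Hypo{B}, singular Gronwall, continuation). The reduction to $L$, the identification of $D(L^\alpha)$ with $D(A^\alpha)$, and the final globalization step are all fine.

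There is, however, a genuine gap at the decisive step. You set $\varphi(t)=e^{\lambda_0(t-t_0)}\Norm{v(t)}_{H_\alpha}$, derive
\[
\varphi(t)\le C_\alpha\Norm{v(t_0)}_{H_\alpha}+C_\alpha\eta\int_{t_0}^{t}(t-s)^{-\alpha}\varphi(s)\ds\text,
\]
and claim that Henry's singular Gronwall lemma yields $\varphi(t)\le M_2\Norm{v(t_0)}_{H_\alpha}$ with $M_2$ independent of $t$ once $\eta$ is small. That conclusion is false: for \emph{any} $\eta>0$ the lemma only gives $\varphi(t)\le C\Norm{v(t_0)}_{H_\alpha}E(t-t_0)$ with $E$ growing like $e^{\theta(t-t_0)}$, $\theta=(C_\alpha\eta\Gamma(1-\alpha))^{1/(1-\alpha)}>0$; already the case $\alpha=0$ (the classical Gronwall inequality, where the bound $ae^{b(t-t_0)}$ is sharp) shows that no smallness of the coefficient produces a $t$-uniform bound. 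The missing idea is to exploit the strictness of the spectral hypothesis: since $L$ is sectorial and $\sigma(L)$ lies in the \emph{open} half-plane $\Real\lambda>\lambda_0$, the part of $\sigma(L)$ in any strip $\Real\lambda\le\lambda_0+1$ is compact, hence $\inf\Real\sigma(L)>\lambda_0$ and your contour argument delivers the semigroup bounds with some rate $\mu>\lambda_0$. Running the same computation with $\varphi(t)=e^{\mu(t-t_0)}\Norm{v(t)}_{H_\alpha}$ and choosing $\eta$ so small that $\theta\le\mu-\lambda_0$, the exponential growth coming from the Gronwall lemma is absorbed into $e^{-(\mu-\lambda_0)(t-t_0)}$ and one obtains~\eqref{e:asymptest} with the stated rate $\lambda_0$. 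With this correction (and your continuation argument to propagate $\Norm{v(t)}_{H_\alpha}\le\delta$), the proof closes and coincides with the paper's intended route.
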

\begin{proof}
The result is proved analogously to~\cite[Theorem~5.1.1]{Henry}.
\end{proof}

The above classical results have several disadvantages.
In the lack of a local H\"{o}lder-Lipschitz condition or, even more,
in the multi-valued case, there may be solutions of~\eqref{e:semilinear}
in a weaker sense which are not covered in Theorem~\ref{t:classass}.
Moreover, in the most important case $H=L_2(\Omega)$ and when $f$ is
generated by a superposition operator, the choice $\alpha=0$ is not possible,
that is, one cannot obtain a nontrivial stability criterion
in $H_0=L_2(\Omega)$ by Theorem~\ref{t:classass}.
Indeed, it is well known that any differentiable (single-valued)
superposition operator $f$ in $L_2(\Omega)$ is actually affine,
see e.g.~\cite{KrasTop}.

In addition, even just the acting condition $f\colon U\to H$ in the
spaces $H_\alpha=V=W^{1,2}(\Omega)$ and $H=L_2(\Omega)$ leads to a growth
condition on $f$ which appears unnecessarily restrictive. In the study
of stationary solutions, one typically only requires that $f\colon V\to V'$
is continuous (and usually compact) which is satisfied under a much
milder growth condition.

A solution of this problem is to replace the image space $H$ in
Theorems~\ref{t:classglobal} and~\ref{t:classass} by a larger space
with a weaker topology. This can be done using Amann's extrapolated
power scales.

\section{Results Using Extrapolated Power Scales}\label{s:results}

In this section, we make the same general hypotheses about $A$ as in the
previous section, that is, $A$ is a densely defined sectorial operator with
spectrum disjoint from $(-\infty,0]$.
We define the norm~\eqref{e:Halpha} on $H$ also in case $\alpha<0$.
In general, $H$ is not complete with respect to this norm, and so we define
$H_\alpha$ for $\alpha<0$ as the corresponding completion. With this notation,
Amann's extrapolated power scale theory (see~\cite{AmannExtrapol}
or~\cite[Chapter~V]{Amann}) provides the following results.

All embeddings $H_\beta\subseteq H_\alpha$ with $\alpha<\beta$ are
dense; they are all compact if and only if one of these embeddings is compact,
and this is the case if and only if $A$ has a compact resolvent.

For $\alpha\in\R$, $A$ induces by graph closure (or restriction in case
$\alpha\ge0$) isomorphisms $A_\alpha\colon H_{1+\alpha}\to H_\alpha$
(hence $A_\alpha$ is closed as an operator in $H_\alpha$
by~\cite[Lemma~I.1.1.2]{Amann}).  For $\beta>\alpha$, $A_\beta$ is the
$H_\beta$-realization of $A_\alpha$, that is,
$A_\beta=A_\alpha|_{D(A_\beta)}$ with
$D(A_\beta)=A_\alpha^{-1}(H_\beta)=H_{\beta+1}$.
All $A_\alpha$ are thus densely defined operators in $H_\alpha$.
They have the same spectrum as $A$ and are sectorial in $H_\alpha$
(hence of positive type).
In particular, $-A_\alpha$ generates an analytic semigroup in $H_\alpha$.
The corresponding semigroups correspond to each other by restriction
or (unique) continuous extension, respectively.

It is remarkable that for the following result it is sufficient that $A$ is a
densely defined operator in a Banach space $H$ of positive type.
It follows by combining Proposition~V.1.2.6 with Theorem~V.1.3.9
(and their proofs) from~\cite{Amann},
cf.\ e.g.~\cite[Corollary~V.1.3.9]{Amann}.

\begin{lemma}\label{l:Jalpha}
There is a family of isometric isomorphisms
$J_{\alpha,\beta}\colon H_\alpha\to H_\beta$ for $\alpha,\beta\in\R$ with
$A_\alpha=J_{\alpha,\beta}^{-1}A_\beta J_{\alpha+1,\beta+1}$. In fact,
$J_{\alpha,\beta}=(A_\alpha)^{\alpha-\beta}$ for $\alpha\le\beta$, and
$J_{\alpha,\beta}=J_{\beta,\alpha}^{-1}=A_\beta^{\alpha-\beta}$ for
$\alpha\ge\beta$. Moreover, if $\gamma\ge0$ then
$J_{\alpha+\gamma,\beta+\gamma}=J_{\alpha,\beta}|_{H_{\alpha+\gamma}}$
is the $H_{\beta+\gamma}$-realization of $J_{\alpha,\beta}$, that is,
$H_{\alpha+\gamma}=J_{\alpha,\beta}^{-1}(H_{\beta+\gamma})$.
\end{lemma}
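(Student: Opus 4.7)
The plan is to construct the isomorphisms $J_{\alpha,\beta}$ explicitly as (possibly negative) fractional powers of the extrapolated operators and then verify isometry, the intertwining identity, and the restriction property by reducing everything to the semigroup law $(A_\alpha)^{z_1}(A_\alpha)^{z_2} = (A_\alpha)^{z_1+z_2}$ and to the fact that $A_\beta$ is a restriction of $A_\alpha$ whenever $\beta \ge \alpha$. The guiding heuristic is the formal computation
\[
J_{\alpha,\beta}^{-1} A_\beta J_{\alpha+1,\beta+1} = A^{\beta-\alpha}\, A\, A^{\alpha-\beta} = A,
\]
which now has to be executed on the correct domains.

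First I would treat the case $\alpha \le \beta$. Setting $s \coloneqq \beta - \alpha \ge 0$ and $J_{\alpha,\beta} \coloneqq (A_\alpha)^{-s}$, this is a bounded operator on $H_\alpha$ because $A_\alpha$ is sectorial on $H_\alpha$ of positive type. The entire point of Amann's extrapolation construction (Proposition V.1.2.6 together with Theorem V.1.3.9 in~\cite{Amann}, already quoted in the text) is the isometric identification $H_\beta = D((A_\alpha)^s)$ with $\Norm v_{H_\beta} = \Norm{(A_\alpha)^s v}_{H_\alpha}$. This immediately yields
\[
\Norm{J_{\alpha,\beta} u}_{H_\beta} = \Norm{(A_\alpha)^s (A_\alpha)^{-s} u}_{H_\alpha} = \Norm u_{H_\alpha},
\]
so $J_{\alpha,\beta}\colon H_\alpha \to H_\beta$ is an isometric isomorphism. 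For $\alpha \ge \beta$ I would set $J_{\alpha,\beta} \coloneqq (A_\beta)^{\alpha-\beta}$, which is the inverse of $J_{\beta,\alpha}$ by the semigroup law and hence also isometric; for $\alpha = \beta$ both formulas collapse to the identity, establishing consistency.

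For the intertwining identity, the key auxiliary fact is that $(A_{\alpha+1})^{-s}$ coincides with $(A_\alpha)^{-s}|_{H_{\alpha+1}}$: this is just the compatibility of fractional powers with restriction to an invariant subspace, using $A_{\alpha+1} = A_\alpha|_{H_{\alpha+2}}$ and the Dunford-integral representation of the fractional power via the resolvent. Combined with $A_\beta = A_\alpha|_{H_{\beta+1}}$ and $J_{\alpha,\beta}^{-1} = (A_\alpha)^s$ acting from $H_\beta$ to $H_\alpha$, the semigroup law then gives, for $u \in H_{\alpha+1}$,
\[
J_{\alpha,\beta}^{-1} A_\beta J_{\alpha+1,\beta+1}\, u = (A_\alpha)^s\, A_\alpha\, (A_\alpha)^{-s} u = A_\alpha u.
\]
The same restriction-compatibility delivers $J_{\alpha+\gamma,\beta+\gamma} = J_{\alpha,\beta}|_{H_{\alpha+\gamma}}$ and $H_{\alpha+\gamma} = J_{\alpha,\beta}^{-1}(H_{\beta+\gamma})$ for $\gamma \ge 0$, since $J_{\alpha,\beta}$ is bijective and its restriction maps onto $H_{\beta+\gamma}$ by applying the fractional-power identification once more, this time with base point $\alpha + \gamma$.

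The main obstacle is not a hard calculation but a bookkeeping one: everything rests on the twin facts that (i)~$H_\beta$ for $\beta \ge \alpha$ is isometrically identified with $D((A_\alpha)^{\beta-\alpha})$, and (ii)~fractional powers of the restriction $A_\beta = A_\alpha|_{H_{\beta+1}}$ are the restrictions of the fractional powers of $A_\alpha$. Both facts are built into the Amann results cited in the text; once they are in hand, the rest of the proof is a careful tracking of which operator acts on which scale, with the semigroup law doing all the work.
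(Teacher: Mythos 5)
Your proposal is correct and follows essentially the same route as the paper, which proves this lemma simply by citing Amann's Proposition~V.1.2.6 and Theorem~V.1.3.9 (the isometric identification $H_\beta\cong D((A_\alpha)^{\beta-\alpha})$ and the compatibility of fractional powers with realizations); you have merely unpacked the bookkeeping that those citations compress. The explicit verification of the intertwining and restriction identities via the semigroup law is exactly what ``combining the propositions and their proofs'' amounts to.
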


\begin{corollary}\label{c:realize}
Let $\alpha\in\R$, $\gamma\ge0$. If $\sigma\le0$, then
$A_{\alpha+\gamma}^\sigma=A_\alpha^\sigma|_{H_{\alpha+\gamma}}$.
If $\sigma\ge0$ then
$A_{\alpha+\gamma}^\sigma=A_\alpha^\sigma|_{H_{\alpha+\gamma+\sigma}}$ is the
$H_{\alpha+\gamma}$-realization of $A_\alpha^\sigma$, that is,
$H_{\alpha+\gamma+\sigma}=(A_\alpha^\sigma)^{-1}(H_{\alpha+\gamma})$.
\end{corollary}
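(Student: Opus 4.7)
The plan is to reduce both claims to the shift-compatibility property stated at the end of Lemma~\ref{l:Jalpha}. The key observation is that the identities $J_{\alpha,\beta}=A_\alpha^{\alpha-\beta}$ (for $\alpha\le\beta$) and $J_{\alpha,\beta}=A_\beta^{\alpha-\beta}$ (for $\alpha\ge\beta$) already express every fractional power appearing in the corollary as one of the isomorphisms $J_{\cdot,\cdot}$. Concretely, when $\sigma\le0$ one has $A_\alpha^\sigma=J_{\alpha,\alpha-\sigma}$, so that $A_\alpha^\sigma$ is the isometric isomorphism $H_\alpha\to H_{\alpha-\sigma}\subseteq H_\alpha$; when $\sigma\ge0$ one has $A_\alpha^\sigma=J_{\alpha+\sigma,\alpha}$, so that $A_\alpha^\sigma$, viewed as a closed operator in $H_\alpha$ with domain $H_{\alpha+\sigma}$, is the isometric isomorphism $H_{\alpha+\sigma}\to H_\alpha$. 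The analogous identification holds for $A_{\alpha+\gamma}^\sigma$ after replacing $\alpha$ by $\alpha+\gamma$.

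For $\sigma\le0$, I would apply the last sentence of Lemma~\ref{l:Jalpha} to the pair $(\alpha,\alpha-\sigma)$ with shift $\gamma$, obtaining
\[
J_{\alpha+\gamma,\alpha-\sigma+\gamma}=J_{\alpha,\alpha-\sigma}|_{H_{\alpha+\gamma}}\text.
\]
Translating back through the dictionary above immediately yields $A_{\alpha+\gamma}^\sigma=A_\alpha^\sigma|_{H_{\alpha+\gamma}}$, which is the first assertion.

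For $\sigma\ge0$, the same application to the pair $(\alpha+\sigma,\alpha)$ shifted by $\gamma$ gives $J_{\alpha+\sigma+\gamma,\alpha+\gamma}=J_{\alpha+\sigma,\alpha}|_{H_{\alpha+\sigma+\gamma}}$, and Lemma~\ref{l:Jalpha} simultaneously furnishes $H_{\alpha+\sigma+\gamma}=J_{\alpha+\sigma,\alpha}^{-1}(H_{\alpha+\gamma})$. Translating back via the dictionary then yields both $A_{\alpha+\gamma}^\sigma=A_\alpha^\sigma|_{H_{\alpha+\gamma+\sigma}}$ and $H_{\alpha+\gamma+\sigma}=(A_\alpha^\sigma)^{-1}(H_{\alpha+\gamma})$, which together say that $A_{\alpha+\gamma}^\sigma$ is the $H_{\alpha+\gamma}$-realization of $A_\alpha^\sigma$, as claimed.

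The only genuinely delicate point is to keep straight that the two viewpoints on $A_\alpha^\sigma$, as a Komatu fractional power and as one of the isometric isomorphisms $J$ from Lemma~\ref{l:Jalpha}, refer to the same operator with the same domain and codomain in each of the sign regimes $\sigma\le0$ and $\sigma\ge0$. Since this compatibility is already built into Lemma~\ref{l:Jalpha}, the ``main obstacle'' here reduces to careful bookkeeping of indices rather than to any additional analytic work.
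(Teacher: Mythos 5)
Your proposal is correct and is essentially the paper's own argument: the paper states Corollary~\ref{c:realize} without further proof as an immediate consequence of Lemma~\ref{l:Jalpha}, and your identifications $A_\alpha^\sigma=J_{\alpha,\alpha-\sigma}$ for $\sigma\le0$ and $A_\alpha^\sigma=J_{\alpha+\sigma,\alpha}$ for $\sigma\ge0$, combined with the shift property in the last sentence of that lemma, are exactly the intended reduction.
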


We need to apply Amann's theory in different scales of spaces.
The crucial observation for us is that there is a relation between these
different scales. We already remarked that all our hypotheses
which we assumed for $(H,A)$ are also satisfied with the choice
$(H_{-\gamma},A_{-\gamma})$. Starting with this couple instead, we obtain by
the above definition a corresponding family of spaces $(H_{-\gamma})_\alpha$.
For instance, we have $(H_{-\gamma})_0=H_{-\gamma}$.
The following lemma states that these spaces are related to our original
spaces $H_\alpha$.

\begin{lemma}\label{l:main}
If $\alpha,\gamma\in\R$ then $H_\alpha=(H_{-\gamma})_{\alpha+\gamma}$.
\end{lemma}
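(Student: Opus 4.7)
The plan is to realize $H_\alpha$ and $(H_{-\gamma})_{\alpha+\gamma}$ as Banach spaces that share a common dense subspace on which the two defining norms coincide, so that the two spaces are forced to be equal. A natural candidate is $H_\infty \coloneqq \bigcap_{n\ge 0} H_n$. The analytic semigroup $e^{-tA_\alpha}$ maps $H_\alpha$ into $H_\infty$ for $t>0$ and satisfies $e^{-tA_\alpha} u \to u$ in $H_\alpha$ as $t\to 0^+$, so $H_\infty$ is dense in $H_\alpha$; applying the same reasoning to the semigroup $e^{-tA_{-\gamma}}$ on the tilde scale shows that $\bigcap_{m\ge 0} (H_{-\gamma})_m$ is dense in $(H_{-\gamma})_{\alpha+\gamma}$. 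A short argument using Corollary~\ref{c:realize} together with the isometric isomorphisms $J_{-\gamma,0}=A_{-\gamma}^{-\gamma}\colon H_{-\gamma}\to H$ (when $\gamma\ge 0$) and $J_{0,-\gamma}=A^\gamma\colon H\to H_{-\gamma}$ (when $\gamma\le 0$) from Lemma~\ref{l:Jalpha} identifies this intersection with $H_\infty$.

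With the common dense subspace in hand, the next step is to verify the norm identity on $H_\infty$. For $u\in H_\infty$, Corollary~\ref{c:realize}, applied in the appropriate sign regime of $\gamma$ and of $\sigma=\alpha+\gamma$, asserts that one of the fractional powers $A^{\alpha+\gamma}$ and $A_{-\gamma}^{\alpha+\gamma}$ is a restriction of the other, and since $u$ lies in every subspace entering these restrictions, the two operators agree on $u$. Combining this with the definition of the $H_{-\gamma}$-norm (equivalently, with the fact that $J_{0,-\gamma}$ is isometric from Lemma~\ref{l:Jalpha}) and with Komatsu's composition rule $A^{-\gamma}A^{\alpha+\gamma}=A^\alpha$, valid on $H_\infty$, one obtains
\[\Norm{u}_{(H_{-\gamma})_{\alpha+\gamma}} = \Norm{A_{-\gamma}^{\alpha+\gamma}u}_{H_{-\gamma}} = \Abs{A^{-\gamma}A^{\alpha+\gamma}u} = \Abs{A^\alpha u} = \Norm{u}_{H_\alpha}\text.\]

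Since both $H_\alpha$ and $(H_{-\gamma})_{\alpha+\gamma}$ are complete in their respective norms and contain $H_\infty$ as a dense subspace with matching norm, the two spaces coincide. The main technical obstacle is the bookkeeping required for the various sign cases of $\alpha$, $\gamma$, and $\alpha+\gamma$: Corollary~\ref{c:realize} comes in two versions according to whether its exponent parameter $\sigma$ is negative or nonnegative, so the identifications $A_{-\gamma}^{\alpha+\gamma}|_{H_\infty}=A^{\alpha+\gamma}|_{H_\infty}$ and $H_\infty=\bigcap_{m\ge 0}(H_{-\gamma})_m$ must be checked separately in each regime. Once these case distinctions are settled, the central equality becomes a routine application of the composition rules for fractional powers.
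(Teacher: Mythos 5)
Your argument is correct, but it is organized differently from the paper's. The paper sets $\beta\coloneqq\alpha+\gamma$ and reads the statement off from Lemma~\ref{l:Jalpha} almost directly: for $\beta\ge0$ the map $J_{\alpha,-\gamma}=A_{-\gamma}^{\beta}$ is a norm-preserving bijection of $H_\alpha$ onto $H_{-\gamma}$, so the set equivalence $u\in H_\alpha\iff A_{-\gamma}^{\beta}u\in H_{-\gamma}\iff u\in(H_{-\gamma})_{\beta}$ and the norm identity are immediate, with no density argument at all; for $\beta\le0$ it verifies the norm identity on the dense subspace $H_{-\gamma}$ (via Corollary~\ref{c:realize}) and then appeals to uniqueness of completions. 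You instead run a single completion argument over the smaller core $H_\infty=\bigcap_{n\ge0}H_n$ in all cases, paying for this uniformity with two extra steps: establishing density of $H_\infty$ by analytic smoothing of the semigroup, and invoking Komatsu's composition rule $A^{-\gamma}A^{\alpha+\gamma}=A^{\alpha}$ pointwise on $H_\infty$. Both routes rest on the same underlying facts (Lemma~\ref{l:Jalpha}, Corollary~\ref{c:realize}, and the additivity of fractional powers); the paper's version is shorter and, when $\beta\ge0$, yields genuine set equality inside $H_{-\gamma}$ without any completion, while yours avoids the case split at the level of the overall architecture (the sign distinctions survive only in the bookkeeping for Corollary~\ref{c:realize}, as you note). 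One small remark: your final step, like the paper's $\beta\le0$ case, identifies two complete spaces sharing a dense isometric core, which strictly speaking gives equality only up to the canonical isometry; this is harmless here, in the usual sense of Amann's theory, because both spaces embed continuously into a common extrapolation space and the isometry restricts to the identity on the core, but it is worth saying explicitly.
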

\begin{proof}
Set $\beta\coloneqq\alpha+\gamma$.
In case $\beta\ge0$, we obtain from Lemma~\ref{l:Jalpha} that
$A_{-\gamma}^{\beta}=J_{\alpha,-\gamma}$ is norm-preserving
from $H_\alpha$ onto $H_{-\gamma}$. Hence, by the definition of
$(H_{-\gamma})_{\beta}$, we obtain
\[u\in H_\alpha\iff A_{-\gamma}^{\beta}u\in H_{-\gamma}\iff
u\in(H_{-\gamma})_{\beta}\text,\]
and the norm equality
\[\Norm u_{H_\alpha}=\Norm{A_{-\gamma}^{\beta}u}_{H_{-\gamma}}=
\Norm u_{(H_{-\gamma})_{\beta}}\text.\]
In case $\beta\le0$, we obtain from Lemma~\ref{l:Jalpha} that
$A_{\alpha}^\beta=J_{\alpha,-\gamma}$ is norm-preserving from
$H_\alpha$ onto $H_{-\gamma}$. Using Corollary~\ref{c:realize}, we obtain
\[\Norm u_{H_\alpha}=
\Norm{A_{\alpha}^{\beta}u}_{H_{-\gamma}}=
\Norm{A_{-\gamma}^{\beta}u}_{H_{-\gamma}}=
\Norm u_{(H_{-\gamma})_{\beta}}\]
for all $u\in H_{-\gamma}$. Since $H_{-\gamma}$ is densely embedded into
$H_\alpha$ as well as into $(H_{-\gamma})_{\beta}$, the assertion follows.
\end{proof}

Fixing now, throughout this section,
\begin{equation}\label{e:gammaalpha}
\alpha\in[0,1)\text,\qquad\gamma\in[0,1-\alpha)\text,
\end{equation}
we relax the acting condition of $f$ by replacing $H$ by $H_{-\gamma}$ in
the results of Section~\ref{s:classical}, that is, we require now only
$f\colon U\to2^{H_{-\gamma}}$ with $U\subseteq\R\times H_\alpha$.

\begin{definition}\label{d:weak}
We call $u\in C([t_0,t_1),H_{-\gamma})$ a
\emph{$\gamma$-weak\slash mild solution}
of~\eqref{e:semilinear} if there is some $f_0\colon(t_0,t_1)\to H_{-\gamma}$
with $f_0\in L_1((t_0,\tau),H_{-\gamma})$ for every $\tau\in(t_0,t_1)$
such that the following holds for every $t\in(t_0,t_1)$: $(t,u(t))\in U$;
$f_0(t)\in f(t,u(t))$, and
\begin{description}
\item[($\gamma$-weak solution)]
$u'(t)\in H_{-\gamma}$ exists in the sense of the norm of $H_{-\gamma}$,
$u(t)\in D(A_{-\gamma})$, and $u'(t)+A_{-\gamma}u(t)=f_0(t)$.
\item[($\gamma$-mild solution)]
\begin{equation}\label{e:mildgamma}
u(t)=e^{-(t-t_0)A_{-\gamma}}u(t_0)+
\int_{t_0}^te^{-(t-s)A_{-\gamma}}f_0(s)\ds\text.
\end{equation}
\end{description}
\end{definition}

\begin{remark}
Since the semigroups are restrictions of each other, we can
replace~\eqref{e:mildgamma} equivalently by
\begin{equation*}
u(t)=e^{-(t-t_0)A_{-\gamma_0}}u(t_0)+
\int_{t_0}^te^{-(t-s)A_{-\gamma_0}}f_0(s)\ds
\end{equation*}
for every $\gamma_0\ge\gamma$. We point this out, because in the subsequent
Hilbert space setting, the operator $A_{-1/2}$ is ``explicitly'' given, and so
it is natural to choose $\gamma_0=1/2$ in case $\gamma\le1/2$.
\end{remark}

The purpose of relaxing the acting condition of $f$ is that we can
also relax the corresponding continuity hypotheses.
We replace~\eqref{e:Lipschitz} by
\begin{equation}\label{e:goodLip}
\Norm{f(t,u)-f(s,v)}_{H_{-\gamma}}\le
L\cdot(\Abs{t-s}^\sigma+\Norm{u-v}_{H_\alpha})
\quad\text{for all $(t,u)\in U_0$.}
\end{equation}
Similarly, we call $f$ \emph{left-locally bounded into $H_{-\gamma}$}
if for each $t_1>t_0$ and each bounded $M\subseteq H_\alpha$ there is some
$\varepsilon>0$ such that $f\bigl(U\cap([t_1-\varepsilon,t_1)\times M)\bigr)$
is bounded in $H_{-\gamma}$. Then we obtain the following generalization of
Theorem~\ref{t:classreg}.

\begin{theorem}[Regularity]
Every $\gamma$-weak solution is a $\gamma$-mild solution, and the
converse holds if $f_0$ in Definition~\ref{d:weak} is
locally H\"{o}lder continuous. Moreover, if $u\colon[t_0,t_1)\to H_{-\gamma}$
satisfies~\eqref{e:mildgamma} for all $t\in(t_0,t_1)$ then
\begin{enumerate}
\item if $f_0\in L_1((t_0,\tau),H_{-\gamma})$ for every $\tau\in(t_0,t_1)$ then
$u\in C([t_0,t_1),H_{-\gamma})$.
\item if for every $\tau\in(t_0,t_1)$ there is $p>1/(1-\alpha)$ with
$f_0\in L_p((t_0,\tau),H_{-\gamma})$,
then $u\colon(t_0,t_1)\to H_\alpha$ is locally H\"{o}lder continuous, and
$u\in C([t_0,t_1),H_\alpha)$ if and only if $u(t_0)\in H_\alpha$.
\end{enumerate}
\end{theorem}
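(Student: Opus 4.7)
The plan is to reduce each assertion to its counterpart in Theorem~\ref{t:classreg} by reinterpreting the extrapolated scale as a classical one centered at $H_{-\gamma}$. The operator $A_{-\gamma}$ is densely defined and sectorial in $H_{-\gamma}$ and shares its spectrum with $A$ (disjoint from $(-\infty,0]$), so all hypotheses of Section~\ref{s:classical} are inherited by the pair $(H_{-\gamma},A_{-\gamma})$. Applying Amann's construction to this pair yields a family of spaces $(H_{-\gamma})_\beta$, $\beta\in\R$, and Lemma~\ref{l:main} identifies this family with a shift of the original scale, $(H_{-\gamma})_\beta=H_{\beta-\gamma}$. In particular $(H_{-\gamma})_0=H_{-\gamma}$ and $(H_{-\gamma})_{\alpha+\gamma}=H_\alpha$, while~\eqref{e:gammaalpha} guarantees $\alpha+\gamma\in[0,1)$, the admissible range of the parameter in Theorem~\ref{t:classreg}.

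Next I would verify that the notions of solution are compatible. Since $-A_{-\gamma}$ generates in $H_{-\gamma}$ precisely the analytic $C_0$-semigroup appearing in~\eqref{e:mildgamma}, a direct comparison of Definitions~\ref{d:strongH} and~\ref{d:weak} shows that a $\gamma$-weak (respectively $\gamma$-mild) solution of~\eqref{e:semilinear} is the same object as a strong (respectively mild) solution in the sense of Definition~\ref{d:strongH} applied to $(H_{-\gamma},A_{-\gamma})$, with $f$ regarded as a multi-valued map into $H_{-\gamma}$. The equivalence of $\gamma$-weak and $\gamma$-mild solutions under local H\"{o}lder continuity of $f_0$ is then immediate from its classical analogue, and part~(1) transfers literally because the base space $H_{-\gamma}$ is the same on both sides.

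Part~(2) follows by applying Theorem~\ref{t:classreg}(2) to the pair $(H_{-\gamma},A_{-\gamma})$ with the parameter $\tilde\alpha\coloneqq\alpha+\gamma$; by Lemma~\ref{l:main} its conclusion yields exactly local H\"{o}lder continuity into $(H_{-\gamma})_{\tilde\alpha}=H_\alpha$, together with continuity on all of $[t_0,t_1)$ precisely when $u(t_0)\in H_\alpha$. I expect the only real obstacle to be the bookkeeping around the shift $\beta\mapsto\beta-\gamma$: one must track that each ingredient used in the classical proof — the operator-norm bound $\Norm{e^{-tA_{-\gamma}}}_{H_{-\gamma}\to H_\alpha}\le C_0/t^{\alpha+\gamma}$ driving the weakly singular integral estimate, the H\"{o}lder regularity of the free-semigroup trajectory, and its continuity at $t_0$ when $u(t_0)\in H_\alpha$ — corresponds under the shift to the correct statement in the new scale. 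Each such translation follows mechanically from Lemma~\ref{l:main} together with the properties of the operators $A_\alpha$ and their semigroups recalled at the beginning of Section~\ref{s:results}.
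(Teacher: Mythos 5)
Your proposal is correct and follows essentially the same route as the paper: the paper's proof is precisely the reduction to Theorem~\ref{t:classreg} applied to $(H_{-\gamma},A_{-\gamma})$ with parameter $\beta=\alpha+\gamma$, using Lemma~\ref{l:main} to identify $(H_{-\gamma})_{\alpha+\gamma}$ with $H_\alpha$ and the fact that the semigroup generated by $A_{-\gamma}$ extends that generated by $A$. Your additional bookkeeping of the semigroup estimates under the shift is consistent with, though more detailed than, what the paper records.
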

\begin{proof}
This is essentially Theorem~\ref{t:classreg} with $(H,A,\alpha)$ replaced by
$(H_{-\gamma},A_{-\gamma},\beta)$ with $\beta\coloneqq\alpha+\gamma$.
Note that the semigroup generated by $A_{-\gamma}$ is indeed an extension of
the semigroup generated by $A$. Moreover,
by Lemma~\ref{l:main}, the space $(H_{-\gamma})_\beta$ in the corresponding
assertion of Theorem~\ref{t:classreg} is indeed the same as the space
$H_\alpha$ in the assertion of Theorem~\ref{t:classreg}.
\end{proof}

In exactly the same way, the following result follows from
Theorem~\ref{t:classglobal}.

\begin{theorem}[Uniqueness, Existence, Maximal Interval]\label{t:existence}
Suppose~\eqref{e:gammaalpha}.
\begin{enumerate}
\item\label{i:uniqueness}
If $f\colon U\to H_{-\gamma}$ satisfies a right local H\"{o}lder-Lipschitz
condition in the sense~\eqref{e:goodLip}, then for every $(t_0,u_0)\in U$
and $t_1\in(t_0,\infty]$ there is at most one
$\gamma$-mild solution $u\in C([t_0,t_1),H_\alpha)$
of~\eqref{e:semilinear} satisfying $u(t_0)=u_0$.
\item
Moreover, a $\gamma$-weak solution exists with some $t_1>t_0$, and
if $f$ is left-locally bounded into $H_{-\gamma}$, then
some maximal $t_1>t_0$ can be chosen such that either $t_1=\infty$ or
$\Norm{u(t)}_{H_{\alpha}}\to\infty$ as $t\to t_1$ or the limit
$u_1=\lim_{t\to t_1^-}u(t)$ exists in $H_\alpha$ with $(t_1,u_1)\notin U$.
\end{enumerate}
\end{theorem}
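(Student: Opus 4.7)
The plan is to apply Theorem~\ref{t:classglobal} verbatim, but to the shifted scale $(H_{-\gamma},A_{-\gamma},\beta)$ in place of $(H,A,\alpha)$, where $\beta\coloneqq\alpha+\gamma$. Condition~\eqref{e:gammaalpha} is tailored precisely so that $\beta\in[0,1)$, which is the range of the fractional exponent demanded by Theorem~\ref{t:classglobal}. All standing assumptions of Section~\ref{s:classical} transfer to the shifted data: as recalled at the beginning of this section, $A_{-\gamma}$ is a densely defined sectorial operator in $H_{-\gamma}$ with the same spectrum as $A$ (hence disjoint from $(-\infty,0]$), and the semigroup $e^{-tA_{-\gamma}}$ on $H_{-\gamma}$ is the continuous extension of $e^{-tA}$, so~\eqref{e:mildgamma} is exactly~\eqref{e:mild} read in the shifted scale.

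The link between the two scales is Lemma~\ref{l:main}, which identifies $(H_{-\gamma})_\beta=H_\alpha$ isometrically. Under this identification, the continuity hypothesis~\eqref{e:goodLip} is precisely~\eqref{e:Lipschitz} for $(H_{-\gamma},(H_{-\gamma})_\beta)$, and the property ``left-locally bounded into $H_{-\gamma}$'' translates to ``left-locally bounded into $H$'' in the shifted language. Similarly, a $\gamma$-mild (resp.\ $\gamma$-weak) solution in the sense of Definition~\ref{d:weak} is exactly a mild (resp.\ strong) solution of $u'+A_{-\gamma}u\in f(t,u)$ in $H_{-\gamma}$ in the sense of Definition~\ref{d:strongH} applied to the shifted data.

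With these dictionaries in place, part~\iref{i:uniqueness} is immediate from the corresponding statement in Theorem~\ref{t:classglobal}: two candidate $\gamma$-mild solutions in $C([t_0,t_1),H_\alpha)=C([t_0,t_1),(H_{-\gamma})_\beta)$ satisfy the same integral equation in $H_{-\gamma}$, so the classical uniqueness result applies verbatim. For the second part, Theorem~\ref{t:classglobal} furnishes a maximal strong solution with the stated trichotomy, where the norm in the blow-up alternative is the one of $(H_{-\gamma})_\beta$; a second application of Lemma~\ref{l:main} rewrites this as the $H_\alpha$-norm and rewrites the limit condition $u_1\in(H_{-\gamma})_\beta$ as $u_1\in H_\alpha$, producing exactly the stated conclusion.

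The argument is therefore a direct substitution, and no new estimate is needed. The only point requiring genuine care is the bookkeeping: one must verify that~\eqref{e:gammaalpha} gives strict inequality $\beta<1$ (so that Theorem~\ref{t:classglobal} really applies), and that the topology in the blow-up alternative is the intrinsic $H_\alpha$-topology~\eqref{e:Halpha} rather than some graph norm for $D(A_{-\gamma}^\beta)$. Both points are settled by the norm identity in Lemma~\ref{l:main}, which is the only substantive ingredient beyond Theorem~\ref{t:classglobal} itself.
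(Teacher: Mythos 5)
Your proposal is correct and is exactly the paper's argument: the paper proves this theorem by applying Theorem~\ref{t:classglobal} to $(H_{-\gamma},A_{-\gamma},\beta)$ with $\beta=\alpha+\gamma\in[0,1)$ and invoking Lemma~\ref{l:main} to identify $(H_{-\gamma})_\beta$ with $H_\alpha$. Your write-up merely spells out the bookkeeping that the paper leaves implicit.
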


To generalize Theorem~\ref{t:classass}, we note that we assume now
$f\colon U\to2^{H_{-\gamma}}$ so that we have to generalize some notions.

\begin{definition}
An element $u_0\in H_{1-\gamma}$ is called a
\emph{$\gamma$-weak equilibrium} of~\eqref{e:semilinear}
if $A_{-\gamma}u_0\in f(t,u_0)$ for every $t>0$.
\end{definition}

Since the operators are extensions of each other, we have:

\begin{remark}\label{r:weakequi}
If $0\le\widetilde\gamma\le\gamma$, then each
$\widetilde\gamma$-weak equilibrium is a $\gamma$-weak equilibrium.
Conversely, if $u_0$ is a $\gamma$-weak equilibrium with
$A_{-\gamma}u_0\in H_{-\widetilde\gamma}$, that is, if
$u_0\in H_{1-\widetilde\gamma}$, then $u_0$ is a
$\widetilde\gamma$-weak equilibrium.
Moreover, ``$0$-weak equilibrium'' means the same as ``equilibrium''.
In particular, each equilibrium $u_0$ is a $\gamma$-weak equilibrium,
and the converse holds if $A_{-\gamma}u_0\in H_{-0}=H$, that is, if
$u_0\in H_1=D(A)$.
\end{remark}

We will make the following hypothesis:

\begin{description}
\item[\Hypo{B_\gamma}]
Let $u_0$ be a $\gamma$-weak equilibrium, $U_1\subseteq H_\alpha$ an
open neighborhood of $u_0$, and $[0,\infty)\times U_1\subseteq U$.
Assume that there is a bounded linear map $B\colon H_\alpha\to H_{-\gamma}$
such that the function
$g(t,u)\coloneqq f(t,u_0+u)+A_{-\gamma}u_0-Bu$ satisfies
\[\lim_{\Norm u_{H_\alpha}\to0}\,
\frac{\sup\bigl\{\Norm v_{H_{-\gamma}}:
v\in g\bigl((0,\infty)\times\Curly u\bigr)\bigr\}}%
{\Norm u_{H_\alpha}}=0\text.\]
\end{description}

Note that $H_{1-\gamma}\subseteq H_\alpha$, and so
$B|_{H_{1-\gamma}}\colon H_{1-\gamma}\to H_{-\gamma}$ is bounded.

In the following result, we consider
$A_{-\gamma}-B\colon H_{1-\gamma}\to H_{-\gamma}$ as an operator in
$H_{-\gamma}$ with domain $H_{1-\gamma}\subseteq H_{-\gamma}$, and we
denote the spectrum of this operator by $\sigma(A_{-\gamma}-B)$.

\begin{theorem}[Asymptotic Stability]\label{t:stability1}
Assume~\eqref{e:gammaalpha}. Let hypothesis~\Hypo{B_\gamma} be satisfied.
Suppose that there is $\lambda_0>0$ such that
$\sigma(A_{-\gamma}-B)\subseteq\Curly{\lambda\in\C:\Real\lambda>\lambda_0}$.

Then there exist $M_1,M_2>0$ such that if $t_1>t_0\ge0$
and $u\in C([t_0,t_1),H_\alpha)$ is a $\gamma$-mild solution
of~\eqref{e:semilinear} with $\Norm{u(t_0)-u_0}_{H_\alpha}\le M_1$,
then $u$ satisfies the asymptotic stability estimate~\eqref{e:asymptest}.

If $f$ satisfies in addition the hypotheses of part~\iref{i:uniqueness} of
Theorem~\ref{t:existence}, then additionally for every $t_0\ge0$
and every $u_1\in H_\alpha$ with $\Norm{u_1-u_0}\le M_1$ a unique
$\gamma$-weak solution $u\in C([t_0,\infty),H_\alpha)$ with $u(t_0)=u_1$
exists and satisfies~\eqref{e:asymptest} with $t_1=\infty$.
\end{theorem}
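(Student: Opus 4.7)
The plan is to reduce the assertion to the classical stability result, Theorem~\ref{t:classass}, by passing from the couple $(H,A,\alpha)$ to the couple $(H_{-\gamma},A_{-\gamma},\beta)$ with $\beta\coloneqq\alpha+\gamma$. By~\eqref{e:gammaalpha} we have $\beta\in[0,1)$, and the general hypotheses of Section~\ref{s:classical} are satisfied by $A_{-\gamma}$: it is densely defined and sectorial in $H_{-\gamma}$, and its spectrum coincides with $\sigma(A)$, hence is disjoint from $(-\infty,0]$. Lemma~\ref{l:main} provides the crucial identification $(H_{-\gamma})_\beta=H_\alpha$ with equal norms, so that the continuity requirement $u\in C([t_0,t_1),H_\alpha)$ of the present theorem coincides with the corresponding requirement in Theorem~\ref{t:classass} applied to the new couple.

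Next I would translate hypothesis~\Hypo{B_\gamma} into hypothesis~\Hypo{B} for the extrapolated couple. Since $u_0\in H_{1-\gamma}=D(A_{-\gamma})$ is the analogue of ``$u_0\in D(A)$'' for the new couple, and since $A_{-\gamma}u_0\in f(t,u_0)$, the element $u_0$ is an equilibrium in the sense of Section~\ref{s:classical} for the problem $u'(t)+A_{-\gamma}u(t)\in f(t,u(t))$ posed in $H_{-\gamma}$. The bounded map $B\colon H_\alpha\to H_{-\gamma}$ is automatically bounded from $(H_{-\gamma})_\beta$ to $H_{-\gamma}$, and the limit condition in~\Hypo{B_\gamma} is literally~\Hypo{B} for the new couple. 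The spectral gap assumption on $\sigma(A_{-\gamma}-B)$ coincides verbatim with that of Theorem~\ref{t:classass} for the perturbed operator there.

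I also need to check that ``$\gamma$-mild solution'' from Definition~\ref{d:weak} coincides with ``mild solution'' in the sense of Definition~\ref{d:strongH} applied to $A_{-\gamma}$ in $H_{-\gamma}$. This is built in by construction: formula~\eqref{e:mildgamma} is literally~\eqref{e:mild} with $A$ replaced by $A_{-\gamma}$, and $e^{-tA_{-\gamma}}$ is the continuous extension of $e^{-tA}$ to $H_{-\gamma}$. With these identifications, Theorem~\ref{t:classass} applied to the new couple delivers constants $M_1,M_2>0$ and the estimate~\eqref{e:asymptest} for every $\gamma$-mild solution satisfying the smallness condition. The existence and uniqueness statement then follows from Theorem~\ref{t:existence}, which was itself obtained by exactly the same reduction.

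The only delicate point I anticipate is purely bookkeeping: keeping track of which extrapolation scale each object lives in, and making sure that the scale produced by applying Amann's construction to the already extrapolated couple $(H_{-\gamma},A_{-\gamma})$ really matches the original scale of $A$. This is precisely what Lemma~\ref{l:main} (together with Corollary~\ref{c:realize}) delivers, and once it is invoked the argument is a mechanical translation of symbols. No new estimate beyond those already established in Section~\ref{s:classical} has to be proved.
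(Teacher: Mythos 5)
Your proposal is correct and coincides with the paper's own argument: the paper's proof is exactly the reduction to Theorem~\ref{t:classass} applied to $A_{-\gamma}$ in $H_{-\gamma}$, with Lemma~\ref{l:main} supplying the identification $(H_{-\gamma})_{\alpha+\gamma}=H_\alpha$. Your write-up merely spells out the bookkeeping that the paper leaves implicit.
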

\begin{proof}
The result follows by applying Theorem~\ref{t:classass} to the operator
$A_{-\gamma}$ in the space $H_{-\gamma}$.
\end{proof}

A stable manifold result in the spirit of Theorem~\ref{t:stability1}
in, roughly speaking, the case $B=0$ was shown in~\cite{FiedlerVishik}.

Theorem~\ref{t:stability1} is not as convenient as it appears at a first
glance, because the operator $A_{-\gamma}-B$ is rather abstract, in general,
and so its spectrum is hard to estimate. Therefore, we formulate two special
cases in which this spectrum is ``easier'' to calculate.

The first case is described in the following result.
Recall that~\eqref{e:gammaalpha} implies that
\[D(A)=H_1\subseteq H_{1-\gamma}\subseteq H_\alpha=D(B)\subseteq H\text.\]
In particular, under the assumptions of the following result,
$A-B$ is an operator in $H$ with domain $H_1$.
Analogously to Theorem~\ref{t:classass}, we denote its spectrum by
$\sigma(A-B)$.

\begin{theorem}\label{t:spectrum}
Assume~\eqref{e:gammaalpha}. Let hypothesis~\Hypo{B_\gamma} be satisfied.
Suppose that at least one of
\begin{equation}\label{e:BHone}
B(H_{1-\gamma})\subseteq H
\end{equation}
or
\begin{gather}
B(H_1)\subseteq H\text,\label{e:BHzero}\\
A_{-\gamma}u-Bu\in H\implies u\in H_1\label{e:BHreal}
\end{gather}
holds. Then $\sigma(A_{-\gamma}-B)=\sigma(A-B)\ne\C$.
In particular, if $\lambda_0>0$ is such
$\sigma(A-B)\subseteq\Curly{\lambda\in\C:\Real\lambda>\lambda_0}$
then the conclusion of Theorem~\ref{t:stability1} holds with that $\lambda_0$.
\end{theorem}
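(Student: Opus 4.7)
The plan is to establish the stronger statement $\rho(A - B) = \rho(A_{-\gamma} - B)$, with the key enabling fact being that the resolvent of $A_{-\gamma} - B$ maps $H_{-\gamma}$ back into $H$.

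First I would observe that the first alternative $B(H_{1-\gamma}) \subseteq H$ reduces to the second: under \eqref{e:BHone}, condition \eqref{e:BHreal} is automatic, since for $u \in H_{1-\gamma}$ with $A_{-\gamma}u - Bu \in H$ we have $Bu \in H$, hence $A_{-\gamma}u \in H$, so $u \in H_1$. Thus in both cases \eqref{e:BHreal} holds together with $B(H_1) \subseteq H$; moreover, the closed graph theorem, applied using that $B\colon H_\alpha \to H_{-\gamma}$ is bounded and that the embeddings $H_1 \hookrightarrow H_\alpha$ and $H \hookrightarrow H_{-\gamma}$ are continuous, makes $B\colon H_1 \to H$ bounded. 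So $A - B\colon H_1 \to H$ is a well-defined closed operator. Since $\alpha + \gamma < 1$ by \eqref{e:gammaalpha}, the interpolation inequality $\Norm{u}_{H_\alpha} \le C\Norm{u}_{H_{1-\gamma}}^{\alpha+\gamma}\Norm{u}_{H_{-\gamma}}^{1-(\alpha+\gamma)}$ combined with Young's inequality yields that $B|_{H_{1-\gamma}}$ is $A_{-\gamma}$-bounded with relative bound zero. Standard perturbation theory then makes $A_{-\gamma} - B$ sectorial in $H_{-\gamma}$, so $\rho(A_{-\gamma}-B) \ne \emptyset$ and in particular $\sigma(A_{-\gamma}-B) \ne \C$.

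Next I fix some $\mu \in \rho(A_{-\gamma}-B)$ and set $R := (\mu-A_{-\gamma}+B)^{-1}\colon H_{-\gamma}\to H_{1-\gamma}$. The crucial smoothing property is $R(H_{-\gamma}) \subseteq H_{1-\gamma} \subseteq H$, using that $1-\gamma>0$. Using \eqref{e:BHreal} I then show that $R$ actually maps $H$ into $H_1$ and satisfies $R|_H = (\mu-A+B)^{-1}$: for $v\in H$, the element $u := Rv$ lies in $H_{1-\gamma}\subseteq H$, so $A_{-\gamma}u - Bu = \mu u - v \in H$, forcing $u\in H_1$ by \eqref{e:BHreal}; injectivity on $H_1$ is inherited from injectivity on $H_{1-\gamma}$. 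Hence $\mu\in\rho(A-B)$.

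For arbitrary $\lambda \in \C$ I would then use the factorizations
\begin{equation*}
\lambda - A_{-\gamma} + B = \bigl[I + (\lambda-\mu) R\bigr](\mu - A_{-\gamma}+B) \quad\text{on } H_{1-\gamma}
\end{equation*}
and
\begin{equation*}
\lambda - A + B = \bigl[I + (\lambda-\mu) R|_H\bigr](\mu - A+B) \quad\text{on } H_1,
\end{equation*}
which reduce $\lambda\in\rho(A_{-\gamma}-B)$ to the bijectivity of $I + K(\lambda)$ on $H_{-\gamma}$, and $\lambda\in\rho(A-B)$ to the bijectivity of $I + K(\lambda)|_H$ on $H$, where $K(\lambda) := (\lambda-\mu)R$ satisfies $K(\lambda)(H_{-\gamma})\subseteq H$. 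The equality $\rho(A-B) = \rho(A_{-\gamma}-B)$ then follows from an abstract lemma: if $X\hookrightarrow Y$ is a continuous embedding of Banach spaces and $K\colon Y\to Y$ is bounded with $K(Y)\subseteq X$, then $I+K$ is bijective on $Y$ if and only if $I+K|_X$ is bijective on $X$; the short proof uses only the identity $y = y_0 - Ky$ for solutions of $(I+K)y = y_0$, together with $Ky \in X$. The main insight, which I view as the heart of the argument, is recognizing that \eqref{e:BHreal} combined with the automatic inclusion $H_{1-\gamma}\subseteq H$ forces $R$ to have this smoothing property; everything else is then routine.
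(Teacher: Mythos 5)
Your proof is correct and follows essentially the same route as the paper: reduce \eqref{e:BHone} to \eqref{e:BHzero}--\eqref{e:BHreal}, observe that $A-B$ is the $H$-realization of $A_{-\gamma}-B$, obtain one resolvent point via the relative-bound-zero perturbation argument, and exploit the smoothing property $R(H)\subseteq H_1\subseteq H$ of the resolvent. The only difference is that where the paper cites \cite[Remark~3.2]{ArendtPerturb} for sectoriality of the perturbed operator and \cite[Lemma~V.1.1.1]{Amann} for the coincidence of the spectra of an operator and its realization, you prove these two ingredients directly (via the moment inequality with Young's inequality, and via the resolvent factorization with your abstract bijectivity lemma, respectively); both arguments are sound.
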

\begin{proof}
We first note that~\eqref{e:BHone} implies~\eqref{e:BHzero}
and~\eqref{e:BHreal}, because $A\colon H_1\to H$ is the $H$-realization of
$A_{-\gamma}\colon H_{1-\gamma}\to H_{-\gamma}$. Moreover,~\eqref{e:BHzero}
and~\eqref{e:BHreal} are equivalent to the assertion that
$C_H\coloneqq A-B\colon H_1\to H$ is the $H$-realization of
$C\coloneqq A_{-\gamma}-B\colon H_{1-\gamma}\to H_{-\gamma}$.

Putting $\beta\coloneqq\alpha+\gamma\in[0,1)$, we have by Lemma~\ref{l:main}
that $D(A_{-\gamma}^\beta)=(H_{-\gamma})_\beta=H_\alpha$.
Since $B\colon H_\alpha\to H_{-\gamma}$ is bounded, and $A_{-\gamma}$ is
sectorial, it follows that $A_{-\gamma}-B$ is sectorial,
see e.g.~\cite[Remark~3.2]{ArendtPerturb}.

Considering $C$ as an operator in $H_{-\gamma}$ with domain
$D(C)=H_{1-\gamma}$, we find in particular that there is $\mu>0$
such that $\mu I+C$ has a bounded inverse $R$, and
$R(H)\subseteq R(H_{-\gamma})=D(C)\subseteq H$.
Hence~\cite[Lemma~V.1.1.1]{Amann} implies that the spectra of $C$ and of its
$H$-realization $C_H$ coincide.
\end{proof}

The other special case of Theorem~\ref{t:stability1} concerns
``weak'' eigenvalues.

\begin{definition}
We call $\lambda\in\C$ a
\emph{$\gamma$-weak eigenvalue of $A-B$ with eigenvector $u\in H_{1-\gamma}$},
if $\lambda$ is an eigenvalue of $A_{-\gamma}-B$ with eigenvector $u$.
\end{definition}

Analogously to Remark~\ref{r:weakequi}, we obtain:

\begin{remark}\label{r:weakev}
If $0\le\widetilde\gamma\le\gamma$
and $\lambda$ is a $\widetilde\gamma$-weak eigenvalue of $A-B$,
then $\lambda$ is a $\gamma$-weak eigenvalue of $A-B$.

Conversely, if $\lambda$ is a $\gamma$-weak eigenvalue of $A-B$
with eigenvector $u\in H_{1-\gamma}\subseteq H$ (recall that $\gamma\le1$)
satisfying $Bu\in H_{-\widetilde\gamma}$ or $u\in H_{1-\widetilde\gamma}$,
then $\lambda$ is a $\widetilde\gamma$-weak eigenvalue of $A-B$
with eigenvector $u\in H_{1-\widetilde\gamma}$.

Moreover, ``$0$-weak eigenvalue'' means the same as ``eigenvalue''.
In particular, each eigenvalue $\lambda$ of $A-B$ is a
$\gamma$-weak eigenvalue of $A-B$; conversely,
if $\lambda$ is a $\gamma$-weak eigenvalue of $A-B$ with eigenvector
$u\in H_{1-\gamma}$ satisfying $Bu\in H$ or $u\in H_1$, then
$\lambda$ is an eigenvalue of $A-B$ with eigenvector $u\in H_1$.
\end{remark}

Remark~\ref{r:weakev} implies in particular:

\begin{proposition}\label{p:BVH}
If at least one of~\eqref{e:BHone} or~\eqref{e:BHreal}
holds, then $\lambda$ is a $\gamma$-weak eigenvalue of $A-B$ with
eigenspace $E$ if and only if $\lambda$ is an eigenvalue of $A-B$ with the
same eigenspace $E$, and automatically $E\subseteq D(A)=H_1$.
\end{proposition}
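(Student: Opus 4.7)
The plan is to deduce the proposition directly from Remark~\ref{r:weakev}, which already encodes essentially all the work. The first part of that remark (applied with $\widetilde\gamma=0$) states that every eigenvalue of $A-B$ is a $\gamma$-weak eigenvalue with the same eigenvector, so the eigenspace of $A-B$ at $\lambda$ is always contained in the eigenspace $E$ of $A_{-\gamma}-B$ at $\lambda$. The nontrivial task is to prove the reverse containment together with $E\subseteq H_1$.

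Concretely, I would start with an arbitrary $u\in E\subseteq H_{1-\gamma}$ satisfying $(A_{-\gamma}-B)u=\lambda u$ and aim to show $u\in H_1$. Once this is achieved, the identity $A=A_{-\gamma}|_{H_1}$ (recall that $A_{-\gamma}$ is defined as the extension of $A$ by graph closure) implies $(A-B)u=(A_{-\gamma}-B)u=\lambda u$, so $u$ is an eigenvector of $A-B$ at $\lambda$, establishing the reverse containment and the inclusion $E\subseteq H_1$ simultaneously. To get $u\in H_1$, I invoke the second part of Remark~\ref{r:weakev} with $\widetilde\gamma=0$, which reduces the task to checking either $Bu\in H$ or $u\in H_1$.

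These two alternatives are exactly handled by the two hypotheses of the proposition. Under~\eqref{e:BHone}, the condition $Bu\in H$ is immediate from $u\in H_{1-\gamma}$ and $B(H_{1-\gamma})\subseteq H$. Under~\eqref{e:BHreal}, I use the eigenvalue relation: since $\gamma\le 1$ we have the embedding $H_{1-\gamma}\subseteq H$, so $u\in H$ and thus $\lambda u\in H$; then $A_{-\gamma}u-Bu=\lambda u\in H$, and~\eqref{e:BHreal} forces $u\in H_1$. In either case the desired regularity of $u$ follows.

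I do not expect any real obstacle here; the proposition is a short bookkeeping consequence of Remark~\ref{r:weakev} used in both directions, together with whichever of~\eqref{e:BHone} or~\eqref{e:BHreal} is assumed to propagate the regularity of an eigenvector from $H_{1-\gamma}$ back to $H_1=D(A)$. The only subtle point to double-check is that the two reductions really match the hypotheses of Remark~\ref{r:weakev}, which they do by the elementary inclusions and the eigenvalue identity noted above.
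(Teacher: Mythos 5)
Your proposal is correct and follows exactly the route the paper intends: the paper gives no separate proof but simply prefaces the proposition with ``Remark~\ref{r:weakev} implies in particular:'', and your argument is precisely the spelled-out version of that reduction (forward direction from the first part of the remark with $\widetilde\gamma=0$; reverse direction by verifying $Bu\in H$ under~\eqref{e:BHone} or $u\in H_1$ under~\eqref{e:BHreal} via the eigenvalue identity). No gaps.
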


Now we are in a position to formulate a variant of Theorem~\ref{t:stability1}
in terms of eigenvalues instead of spectral values.

\begin{theorem}[Asymptotic Stability with Eigenvalues]\label{t:stability2}
Assume that one of the embeddings $H_\beta\to H_\delta$ is compact for
$\beta>\delta$, that is, $A$ has a compact resolvent.
Suppose~\eqref{e:gammaalpha}, and let
hypothesis~\Hypo{B_\gamma} be satisfied. Then $\sigma(A_{-\gamma}-B)$
consists only of the $\gamma$-weak eigenvalues of $A-B$, and the corresponding
eigenspaces are finite-dimensional. More general, $A_{-\gamma}-B-\lambda I$
is a Fredholm operator of index~$0$ in $H_{-\gamma}$ for every $\lambda\in\C$.

In particular, if $\lambda_0>0$ is such that
every $\gamma$-weak eigenvalue $\lambda\in\C$ of $A-B$ satisfies
$\lambda>\lambda_0$, then the conclusion of Theorem~\ref{t:stability1} holds
with that $\lambda_0$.
\end{theorem}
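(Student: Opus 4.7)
The plan is to view $A_{-\gamma}-B$ as a relatively compact perturbation of $A_{-\gamma}$ in $H_{-\gamma}$ and then invoke Riesz--Schauder together with Fredholm perturbation theory. The two key ingredients will be that $A_{-\gamma}$ has compact resolvent in $H_{-\gamma}$ (so that $A_{-\gamma}-\lambda I$ becomes Fredholm of index zero for every $\lambda\in\C$), and that $B$, viewed as an operator $H_{1-\gamma}\to H_{-\gamma}$, is compact; this latter point is where the strict inequality $1-\gamma>\alpha$ built into~\eqref{e:gammaalpha} enters essentially.

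First I would invoke the property of Amann's scale recalled just before Lemma~\ref{l:Jalpha}: compactness of a single embedding $H_\beta\hookrightarrow H_\delta$ with $\beta>\delta$ forces all such embeddings to be compact. Applied to $1-\gamma>-\gamma$, this gives that $H_{1-\gamma}\hookrightarrow H_{-\gamma}$ is compact; since $0\notin\sigma(A_{-\gamma})=\sigma(A)$, the bounded inverse $A_{-\gamma}^{-1}\colon H_{-\gamma}\to H_{1-\gamma}$ is therefore compact when regarded as an operator on $H_{-\gamma}$. Writing
\[
A_{-\gamma}-\lambda I=(I-\lambda A_{-\gamma}^{-1})\,A_{-\gamma}
\]
and applying the classical Riesz--Schauder theorem to $I-\lambda A_{-\gamma}^{-1}$ then yields that $A_{-\gamma}-\lambda I\colon H_{1-\gamma}\to H_{-\gamma}$ is Fredholm of index zero for every $\lambda\in\C$. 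Next, since $\gamma<1-\alpha$, the embedding $H_{1-\gamma}\hookrightarrow H_\alpha$ is compact by the same principle, and composing it with the bounded operator $B\colon H_\alpha\to H_{-\gamma}$ shows that $B\colon H_{1-\gamma}\to H_{-\gamma}$ is compact. The standard stability of Fredholm index under compact perturbations then gives that $A_{-\gamma}-B-\lambda I\colon H_{1-\gamma}\to H_{-\gamma}$ is Fredholm of index zero for every $\lambda\in\C$.

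To conclude, I would recall from the proof of Theorem~\ref{t:spectrum} that $A_{-\gamma}-B$ is sectorial, so that $\sigma(A_{-\gamma}-B)\neq\C$. For any $\lambda\in\sigma(A_{-\gamma}-B)$ the Fredholm operator $A_{-\gamma}-B-\lambda I$ of index zero cannot be surjective and hence cannot be injective, so its kernel is a nontrivial finite-dimensional subspace of $H_{1-\gamma}$; this exhibits $\lambda$ as a $\gamma$-weak eigenvalue of $A-B$. The ``In particular'' clause then follows from Theorem~\ref{t:stability1} applied to the spectral bound supplied by the assumed eigenvalue condition.

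The only point that really needs care is that Fredholm perturbation theory does apply in this scale-of-spaces setting; the compactness of $B\colon H_{1-\gamma}\to H_{-\gamma}$ is not automatic and depends precisely on the strict inequality $\gamma<1-\alpha$, which creates the ``spare'' intermediate compact embedding $H_{1-\gamma}\hookrightarrow H_\alpha$. Everything else is routine Riesz--Schauder combined with the sectoriality of $A_{-\gamma}-B$ already established in the proof of Theorem~\ref{t:spectrum}.
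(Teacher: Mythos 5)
Your argument is correct and is essentially the paper's own proof: both rest on the compactness of the embeddings $H_{1-\gamma}\hookrightarrow H_\alpha$ and $H_{1-\gamma}\hookrightarrow H_{-\gamma}$ guaranteed by the standing hypotheses together with \eqref{e:gammaalpha}, on the Fredholmness (index $0$) of $A_{-\gamma}\colon H_{1-\gamma}\to H_{-\gamma}$, and on stability of the Fredholm index under relatively compact perturbation --- the paper simply treats $B+\lambda I$ as a single relatively compact perturbation and cites \cite[Theorem~5.26]{Kato}, whereas you split off $\lambda I$ via Riesz--Schauder applied to the compact resolvent before perturbing by the compact operator $B$. The only nit is the phrase ``cannot be surjective and hence cannot be injective'': the correct deduction runs the other way (injectivity plus index zero would force bijectivity, hence $\lambda\notin\sigma(A_{-\gamma}-B)$), but the conclusion you actually use --- that the kernel is nontrivial and finite-dimensional --- is valid in either case.
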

\begin{proof}
Recall that the first hypothesis implies that all of the embeddings
$H_\beta\to H_\delta$ are compact if $\beta>\delta$. In particular,
the embedding $H_{1-\gamma}\to H_\alpha$ is compact.

Since $A_{-\gamma}\colon H_{1-\gamma}\to H_{-\gamma}$ is a Fredholm operator
of index~$0$ (in the space $H_{-\gamma}$), it suffices to show
by~\cite[Theorem~5.26]{Kato} that
$C\coloneqq B+\lambda I\colon H_\alpha\to H_{-\gamma}$
is relatively compact with respect to $A_{-\gamma}$. Thus, let
$u_n$ and $A_{-\gamma}u_n$ be bounded in $H_{-\gamma}$. Then $u_n$ is bounded
in $H_{1-\gamma}$, and thus $u_n$ contains a subsequence convergent in
$H_{\alpha}$. Hence, $Cu_n$ contains a subsequence convergent in $H_{-\gamma}$,
as required.
\end{proof}

\section{Relations to Kato's Square Root Problem}\label{s:Hilbert}

For the rest of the paper, we pass to a Hilbert space setting. We assume that
$(H,\Scalr{\parameter,\parameter},\Abs\parameter)$ is a complex Hilbert space.
We use the notation $E'$ for the antidual space of a space $E$, and for an
operator $B$ in $H$, we denote by $B^*$ the Hilbert space adjoint.

Let $(V,\Norm\parameter)$ be a complex Banach space
which is densely embedded into $H$.
The (Banach space) adjoint of the given embedding $i\colon V\to H$ defines
the embedding $i'\colon H'\to V'$ which has automatically a dense range,
since $i$ is one-to-one. Identifying $H$ with $H'$ and $i'(u)$ with
an element of $V'$, we thus have a Gel'fand triple
\begin{equation}\label{e:Gelfand}
V\subseteq H\subseteq V'\text.
\end{equation}
As customary, we denote the pairing of $V'$ and $V$ also by
$\Scalr{\parameter,\parameter}$ (which on $H\times V\subseteq H\times H$
coincides with the scalar product of $H$ by definition of the adjoint,
so that the notation is actually unique).

Throughout this section, let $a\colon V\times V\to\C$ be a sesquilinear form
on $V$ which is continuous, that is, there is $C\in[0,\infty)$ with
\begin{equation}\label{e:continuous}
\Abs{a(u,v)}\le C\Norm u\Norm v\text,
\end{equation}
and which is strongly accretive in the sense that there is $c>0$ with
\begin{equation}\label{e:strongacc}
\Real a(u,u)\ge c\Norm u^2\quad\text{for all $u\in V$.}
\end{equation}
The hypotheses~\eqref{e:continuous} and~\eqref{e:strongacc} mean that
$u\mapsto(\Real a(u,u)+\Abs u^2)^{1/2}$ defines an equivalent norm on $V$
so that $a$ is a closed form on $H\times H$ with domain $D(a)=V$ in the sense
of~\cite{KatoPowersI,Kato,Ouhabaz}.

\begin{remark}\label{r:bM}
For every $M\ge0$, the sesquilinear form
\begin{equation}\label{e:bM}
b_M(u,v)\coloneqq\frac12\bigl(a(u,v)+\overline{a(v,u)}+M\cdot\Scalr{u,v}\bigr)
\end{equation}
is symmetric, that is, $b_M(v,u)=\overline{b_M(u,v)}$, and $b_M$ satisfies
estimates of the type~\eqref{e:continuous} and~\eqref{e:strongacc}.
Hence, $b_M$ becomes a scalar product on $V$, and the norm induced by this
scalar product is equivalent to the norm on $V$. Thus, a form $a$
satisfying~\eqref{e:continuous} and~\eqref{e:strongacc} exists
if and only if $V$ is (isomorphic to) a Hilbert space.
\end{remark}

We associate with $a$ the linear operator $A\colon D(A)\to H$, defined by the
duality $\Scalr{Au,\parameter}=a(u,\parameter)$, that is, $D(A)$ is the set
of all $u\in V$ for which there is some (uniquely determined) $Au\in H$ with
\[\Scalr{Au,\varphi}=a(u,\varphi)\quad\text{for all $\varphi\in V$.}\]
Besides $A$, the form $a$ also induces a bounded operator $\Acl\colon V\to V'$,
defined by
\begin{equation}\label{e:Acdef}
\Scalr{\Acl u,\varphi}=a(u,\varphi)\quad\text{for all $\varphi\in V$,}
\end{equation}
where now the brace on the left denotes the antidual pairing.

The goal of this section is to answer the following questions (which will
be made precise later on):
\begin{enumerate}
\item Is $\Acl$ also associated to a strongly accretive form?
\item Is it true that $\Acl$ corresponds to $A_{-1/2}$ from
Section~\ref{s:results}?
\end{enumerate}
We will actually see that both answers are equivalent.
They are equivalent to the assertion that $A$ solves
Kato's square root problem. Moreover, we will see that an analogous
equivalence holds not only for $A_{-1/2}$ but actually for
$A_\alpha$ with any $\alpha\in\R$.

We first summarize some well-known facts about $A$, see
e.g.~\cite[Section~VI]{Kato}
and~\cite[Proposition~1.51 and Theorem~1.52]{Ouhabaz}.

\begin{proposition}\label{p:Aassoc}
The operator $A$ is closed and densely defined in $H$. The operators
$A^{-1}\colon H\to H$ and $A^{-1}\colon V\to V$ are bounded,
and $D(A)$ is dense in $V$.
The operator $A\colon D(A)\to H$ is sectorial with spectrum contained in
the open right half-plane, and $-A$ generates an analytic contraction
$C_0$-semigroup in $H$. If $a$ is symmetric, then $A\colon D(A)\to H$
is selfadjoint in $H$.
\end{proposition}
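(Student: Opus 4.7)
The plan is to derive all of the stated properties of $A$ from the bounded invertibility of $\Acl\colon V\to V'$ furnished by the Lax--Milgram lemma. By~\eqref{e:continuous} and~\eqref{e:strongacc}, the (antilinear) Lax--Milgram lemma applies to $\Acl$ as defined in~\eqref{e:Acdef} and yields that $\Acl\colon V\to V'$ is a topological isomorphism with $\Norm{\Acl^{-1}}\le1/c$. Since $D(A)=\Acl^{-1}(H)$ by definition and the embedding $H\subseteq V'$ is bounded, $A^{-1}\colon H\to H$ is bounded as the restriction of $\Acl^{-1}$ to $H$, post-composed with the embedding $V\subseteq H$; likewise $A^{-1}\colon V\to V$ is bounded by restricting $\Acl^{-1}$ to $V\subseteq V'$. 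Density of $D(A)=\Acl^{-1}(H)$ in $V$ follows from density of $H$ in $V'$ (the adjoint embedding $i'\colon H\to V'$ has dense range because $i\colon V\to H$ is injective) together with the homeomorphism property of $\Acl^{-1}\colon V'\to V$. Density of $D(A)$ in $H$ then follows from density of $V$ in $H$, and closedness of $A$ in $H$ is automatic from the boundedness of $A^{-1}\colon H\to H$.

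For sectoriality, the key numerical-range computation is that for $u\in D(A)$ one has $\Scalr{Au,u}=a(u,u)$, and hence
\[\Abs{\Imag\Scalr{Au,u}}\le\Abs{a(u,u)}\le C\Norm u^2\le(C/c)\Real a(u,u)=(C/c)\Real\Scalr{Au,u}\text.\]
Thus the numerical range of $A$ in $H$ lies in a sector $\Abs{\arg\lambda}\le\theta<\pi/2$ with $\tan\theta\le C/c$, and combined with $0\in\rho(A)$ this upgrades to the resolvent estimates on a strictly larger sector that characterize sectoriality; in particular, $-A$ generates an analytic $C_0$-semigroup on $H$. Accretivity $\Real\Scalr{Au,u}\ge0$ together with surjectivity of $\lambda I+A$ for $\lambda>0$ (again from Lax--Milgram) then yields the contraction property via the Lumer--Phillips theorem. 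If $a$ is symmetric, then $\Scalr{Au,\varphi}=a(u,\varphi)=\overline{a(\varphi,u)}=\Scalr{u,A\varphi}$ for all $u,\varphi\in D(A)$ shows $A\subseteq A^*$, and m-accretivity of $A$ upgrades this symmetric inclusion to equality $A=A^*$ by a standard criterion.

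The main obstacle I expect, if one wants to write all of this out from scratch, is the passage from numerical-range containment in a sector to the full resolvent estimate on a strictly larger sector that defines sectoriality and analyticity of the semigroup; all the other assertions are essentially rearrangements of Lax--Milgram. Since both this step and the other claims are covered in full detail by the classical form-theory machinery of~\cite[Section~VI]{Kato} and~\cite[Proposition~1.51 and Theorem~1.52]{Ouhabaz}, the cleanest route is simply to cite these references, which is the approach followed in the paper.
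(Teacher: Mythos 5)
Your proposal is correct and ends up at the same place as the paper, which proves Proposition~\ref{p:Aassoc} purely by citing~\cite[Section~VI]{Kato} and~\cite[Proposition~1.51 and Theorem~1.52]{Ouhabaz}. The Lax--Milgram/numerical-range sketch you give is an accurate outline of the standard form-theory argument underlying those references, and you correctly identify the only nontrivial step (passing from sectorial numerical range plus $0\in\rho(A)$ to the resolvent estimate) as the part one would delegate to the cited sources.
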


Analogous assertions hold also for $\Acl$,
see e.g.~\cite[Theorem~1.55 and subsequent remarks]{Ouhabaz}.

\begin{proposition}
The operator $\Acl$ is an isomorphism of $V$ onto $V'$. It is a
densely defined sectorial operator in $V'$ with spectrum in the
open right half-plane.
\end{proposition}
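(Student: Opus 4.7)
The plan is to verify in turn that $\Acl$ is a Banach space isomorphism $V\to V'$, that it is densely defined and closed as an operator in $V'$ with domain $V$, and that its spectrum is contained in $\Curly{\lambda\in\C:\Real\lambda>0}$ with the resolvent estimate required by Henry's notion of sectoriality recalled in Section~\ref{s:classical}. Boundedness $\Norm{\Acl u}_{V'}\le C\Norm u$ is immediate from~\eqref{e:continuous}, while Lax--Milgram gives bijectivity together with the inverse bound $\Norm u\le c^{-1}\Norm{\Acl u}_{V'}$; indeed, strong accretivity yields $c\Norm u^2\le\Real a(u,u)\le\Abs{\Scalr{\Acl u,u}}\le\Norm{\Acl u}_{V'}\Norm u$. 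Density of the domain $V$ in $V'$ follows from the Gelfand triple~\eqref{e:Gelfand}, and closedness of $\Acl$ in $V'$ from the fact that $\Acl^{-1}\colon V'\to V\hookrightarrow V'$ is a bounded, everywhere-defined inverse.

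For the spectrum I would exploit rotations of the form. For each $\phi\in\R$ with $\Abs\phi<\arctan(c/C)$, the rotated form $e^{-i\phi}a$ satisfies $\Real(e^{-i\phi}a(u,u))\ge c_\phi\Norm u^2$ with $c_\phi\coloneqq c\cos\phi-C\Abs{\sin\phi}>0$. For $\lambda\in\C$, the shifted form $a_\lambda(u,v)\coloneqq a(u,v)-\lambda\Scalr{u,v}$ is associated to $\Acl-\lambda I$, and whenever $\Real(e^{-i\phi}\lambda)\le0$ the rotated shifted form $e^{-i\phi}a_\lambda$ remains strongly accretive with constant $c_\phi$. Lax--Milgram then provides bijectivity of $\Acl-\lambda I\colon V\to V'$ together with $\Norm{(\Acl-\lambda I)^{-1}}_{V'\to V}\le c_\phi^{-1}$. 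Letting $\phi$ sweep $(-\arctan(c/C),\arctan(c/C))$ covers every $\lambda$ outside the closed sector of half-angle $\arctan(C/c)<\pi/2$ about the positive real axis, which already yields $\sigma(\Acl)\subseteq\Curly{\lambda\in\C:\Real\lambda>0}$.

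The one delicate point, in my view, is upgrading this $V'\to V$ resolvent bound, which is $O(1)$ in $\lambda$, to the $V'\to V'$ bound of order $\Abs\lambda^{-1}$ that Henry's sectoriality demands. The device is to solve the resolvent equation for the $H$-pairing: the identity $a(u,\varphi)-\lambda\Scalr{u,\varphi}=\Scalr{f,\varphi}$ (in which $\Scalr{u,\varphi}$ is simultaneously the $H$-pairing and, since $u\in V\subseteq H\subseteq V'$, the $V',V$ antidual pairing) rearranges to $\Scalr{u,\varphi}=\lambda^{-1}(a(u,\varphi)-\Scalr{f,\varphi})$. Taking the supremum over $\varphi\in V$ with $\Norm\varphi\le 1$ and inserting the Lax--Milgram estimate $\Norm u\le c_\phi^{-1}\Norm f_{V'}$ yields $\Norm u_{V'}\le\Abs\lambda^{-1}(Cc_\phi^{-1}+1)\Norm f_{V'}$, which is precisely the sectorial resolvent estimate and completes the proof.
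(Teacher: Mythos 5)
Your proof is correct. Note that the paper does not actually prove this proposition at all: it simply refers to~\cite[Theorem~1.55 and subsequent remarks]{Ouhabaz}. What you have written is, in substance, the standard argument behind that citation, carried out from scratch: Lax--Milgram together with the chain $c\Norm u^2\le\Real a(u,u)\le\Norm{\Acl u}_{V'}\Norm u$ for the isomorphism, density of $V$ in $V'$ from the Gel'fand triple~\eqref{e:Gelfand}, closedness from the bounded everywhere-defined inverse, the rotation of the form to show that the numerical range (hence the spectrum) lies in the closed sector of half-angle $\arctan(C/c)<\pi/2$, and finally the pairing identity $\Scalr{u,\varphi}=\lambda^{-1}\bigl(a(u,\varphi)-\Scalr{f,\varphi}\bigr)$ to upgrade the $O(1)$ bound on $(\Acl-\lambda)^{-1}\colon V'\to V$ to the $O(\Abs\lambda^{-1})$ bound on $V'\to V'$ that Henry's notion of sectoriality requires -- that last step is exactly the right device and is the only genuinely nonroutine point. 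One detail to tighten: to obtain a single constant $M$ on the complement of a fixed sector $\Abs{\arg\lambda}\le\theta$ with $\arctan(C/c)<\theta<\pi/2$, you must restrict $\phi$ to the compact interval $[-(\pi/2-\theta),\,\pi/2-\theta]$, on which $c_\phi$ is bounded away from zero; sweeping the full open interval $(-\arctan(c/C),\arctan(c/C))$ lets $c_\phi\to0$ and does not by itself give a uniform resolvent constant. Also, the conclusion $\sigma(\Acl)\subseteq\Curly{\lambda\in\C:\Real\lambda>0}$ uses that $0$ is in the resolvent set, which you do have (either from the isomorphism statement or from taking $\phi=0$ in your sweep), but is worth stating explicitly since $0$ lies in the closed sector you exhibit.
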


In view of Proposition~\ref{p:Aassoc}, we are in the setting of
Section~\ref{s:results}, and so we can introduce the operators $A^\alpha$,
the spaces $H_\alpha$, and $A_\alpha\colon H_{\alpha+1}\to H_\alpha$ as
in that section. Moreover, also the adjoint operator $A^*$ is of
positive type, and $(A^{\alpha})^*=(A^*)^{\alpha}$,
see~\cite[Lemma~V.1.4.11]{Amann}.
Let $\Hcc{\alpha}$ denote the spaces of Section~\ref{s:results} generated by
the operator $A^*$ in place of $A$.

We consider also $\Acl$ as an unbounded operator in $V'$
with domain $D(\Acl)=V$. Also $\Acl$ is of positive type, and thus
$\Acl^{\alpha}$ are defined for all $\alpha\in\C$; for $\alpha\ge0$,
we endow $D(\Acl^\alpha)$ with the norms
$\Norm u_{D(\Acl^\alpha)}=\Norm{\Acl^\alpha u}_{V'}$ for $\alpha\ge0$ so that
$\Acl^\alpha\colon D(\Acl)\to V'$ are isometric isometries.

Let us first note that the spaces $H_\alpha$ have a more convenient
characterization in our setting. To this end, we denote by
$[\parameter,\parameter]_{\theta}$ the complex interpolation functor of order
$\theta$, see e.g.~\cite{Triebel}.

\begin{proposition}\label{p:powers}
For every $s\in\R$ the operator $A^{is}$ is bounded in $H$ with
$\Norm{A^{is}}\le e^{\pi\Abs s/2}$, and we have the reiteration formulas
\begin{equation}\label{e:reiterate}
H_{(1-\theta)\alpha+\theta\beta}\cong[H_\alpha,H_\beta]_{\theta}
\quad\text{if $\alpha,\beta\in\R$, $0<\theta<1$,}
\end{equation}
and the duality representation
\begin{equation}\label{e:dual}
H_{-\gamma}\cong\Hcb{\gamma}\qquad\text{if $-1\le\gamma\le1$.}
\end{equation}
Additionally,
\begin{equation}\label{e:Katodomw}
\Hcc{\gamma}\cong H_\gamma\quad\text{if $\gamma\in[0,1/2)$.}
\end{equation}
If $a$ is symmetric, then $\Hcc{1/2}=H_{1/2}\cong V$.
\end{proposition}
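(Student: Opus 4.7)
My plan is to prove the five assertions in sequence, combining Kato's classical theory of fractional powers of m-accretive operators with the extrapolation machinery from Section~\ref{s:results}.

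First, for the bound $\Norm{A^{is}}\le e^{\pi\Abs s/2}$: the hypothesis $\Real a(u,u)\ge c\Norm u^2$ gives $\Real\Scalr{Au,u}\ge0$ on $D(A)$, so $A$ is accretive; Proposition~\ref{p:Aassoc} supplies maximality (since $A^{-1}\colon H\to H$ is bounded), hence $A$ is m-accretive. Kato's classical bound for imaginary powers of m-accretive operators~\cite{KatoPowersI} then yields the claimed estimate.

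For the reiteration formula~\eqref{e:reiterate}, since $\Norm{A^{is}}\le Ce^{\theta\Abs s}$ with $\theta=\pi/2<\pi$, Triebel's interpolation theorem~\cite[Section~1.15.3]{Triebel} yields $[H,H_\beta]_\vartheta\cong H_{\vartheta\beta}$ for $\beta>0$ and $\vartheta\in(0,1)$. To cover arbitrary $\alpha,\beta\in\R$, I pick $\gamma\ge-\min(\alpha,\beta)$ and apply the same theorem to $A_{-\gamma}$ on $H_{-\gamma}$: this operator inherits the same imaginary-power bound, since $H_{-\gamma}$ is canonically a Hilbert space via the isometry $J_{0,-\gamma}$ of Lemma~\ref{l:Jalpha} and this intertwining identifies $A_{-\gamma}$ with an m-accretive operator unitarily equivalent to $A$. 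Lemma~\ref{l:main} then identifies the fractional-power domain spaces of $A_{-\gamma}$ with $H_\alpha$ and $H_\beta$, and Triebel's identity transported to the shifted scale yields~\eqref{e:reiterate}.

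For the duality~\eqref{e:dual}, consider first $\gamma\in[0,1]$. By~\cite[Lemma~V.1.4.11]{Amann}, $(A^\gamma)^*=(A^*)^\gamma$, so for $u,v\in H$ the $H$-pairing satisfies $\Scalr{A^{-\gamma}u,v}=\Scalr{u,(A^*)^{-\gamma}v}$. Since $A^{-\gamma}\colon H\to H_\gamma$ and $(A^*)^{-\gamma}\colon H\to\Hcc{\gamma}$ are isometric isomorphisms, this identity extends by density to a nondegenerate sesquilinear pairing between $H_{-\gamma}$ and $\Hcc{\gamma}$ that is isometric on both sides, producing $H_{-\gamma}\cong\Hcb{\gamma}$. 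The case $\gamma\in[-1,0]$ follows by swapping the roles of $A$ and $A^*$ and using reflexivity.

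Finally, for~\eqref{e:Katodomw} and the symmetric case: Kato's theorem on fractional powers of m-accretive operators~\cite{KatoPowersI} asserts $D(A^\gamma)=D((A^*)^\gamma)$ with equivalent norms for $0\le\gamma<1/2$, which gives $\Hcc{\gamma}\cong H_\gamma$. When $a$ is symmetric, Proposition~\ref{p:Aassoc} says $A=A^*$, so $\Hcc{1/2}=H_{1/2}$; Kato's first representation theorem for closed symmetric forms (see e.g.~\cite[Chapter~VI]{Kato}) identifies $D(A^{1/2})$ with $V$ via $\Abs{A^{1/2}u}^2=a(u,u)$, which is equivalent to $\Norm u^2$ by~\eqref{e:continuous} and~\eqref{e:strongacc}. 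The chief obstacle is~\eqref{e:Katodomw}, which rests on the deep Kato theorem cited above and is famously sharp (the endpoint $\gamma=1/2$ being precisely the content of Kato's square root problem discussed in the introduction); the remaining pieces assemble routinely from the standing hypotheses and the extrapolation framework of Section~\ref{s:results}.
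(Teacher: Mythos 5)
Your proof is correct and follows essentially the same route as the paper: m-accretivity plus Kato's bounded-imaginary-powers estimate, reiteration via the BIP interpolation theorem transported along the isometries $J_{\alpha,\beta}$ to the extrapolated scale, duality of the power scales through the adjoint pairing, Kato's theorem $D(A^\gamma)=D((A^*)^\gamma)$ for $\gamma<1/2$, and the representation theorem for symmetric forms (the paper simply cites \cite[Theorem~V.1.5.4]{Amann} and \cite[Proposition~V.1.5.5]{Amann} for the steps you reconstruct by hand). Only cosmetic slips: the imaginary-power bound is \cite[Theorem~5]{KatoPowersII} rather than \cite{KatoPowersI}, and the identification $\Abs{A^{1/2}u}^2=a(u,u)$ is Kato's \emph{second} representation theorem, not the first.
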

\begin{proof}
The norm estimate $\Norm{A^{is}}\le e^{\pi\Abs s/2}$ is shown
in~\cite[Theorem~5]{KatoPowersII}. Using this, we obtain
the reiteration formula~\eqref{e:reiterate} from~\cite[Theorem~V.1.5.4]{Amann}.
The duality formula~\eqref{e:dual} can be found
as~\cite[Proposition~V.1.5.5]{Amann} (note~\cite[Remark~V.1.5.16]{Amann}).
The assertion~\eqref{e:Katodomw} follows from~\cite[Theorem~1.1]{KatoPowersI}.
For symmetric $a$, we have $A=A^*$, hence $H_{1/2}=\Hcc{1/2}$,
and the assertion $H_{1/2}\cong V$ is shown in~\cite{KatoPowersI}
(see also~\cite[Theorem~8.1]{Ouhabaz}).
\end{proof}

The last assertion of Proposition~\ref{p:powers} suggests the following
definition.

\begin{definition}\label{d:Kato}
We call an operator $A$ in $H$ a \emph{Kato operator} (with a form on $V$)
if it is the operator associated with a sesquilinear form
$a\colon V\times V\to\C$ satisfying~\eqref{e:continuous}
and~\eqref{e:strongacc} and $H_{1/2}\cong V$.
\end{definition}

Recall that we assumed throughout that $A$ is associated with a form $a$
on $V$ satisfying~\eqref{e:continuous} and~\eqref{e:strongacc}. Thus, $A$
is a Kato operator if and only if $H_{1/2}\cong V$.

\begin{proposition}
$A$ is a Kato operator if and only if $\Hcc{1/2}\cong H_{1/2}$.
In particular, $A$ is a Kato operator if and only if
\begin{equation}\label{e:Katodom}
\Hcc{\gamma}\cong H_\gamma\quad\text{for all $\gamma\in[0,1/2]$.}
\end{equation}
$A$ is a Kato operator if and only if $A^*$ is a Kato operator.
\end{proposition}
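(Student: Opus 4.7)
My plan is to first prove the symmetry $A$ Kato iff $A^*$ Kato via an extension-uniqueness argument on $V'$, then extract the first equivalence by a parallel argument, and finally dispose of the ``in particular'' clause.

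For the symmetry, assume $A$ is Kato, so $A^{1/2}\colon V\to H$ is an isometric isomorphism. Its Banach antidual adjoint yields an iso $(A^{1/2})'\colon H\to V'$; a Riesz-type computation (setting $g:=((A^{1/2})')^{-1}f$, using $\Scalr{g,A^{1/2}v}_H=\Scalr{f,v}_H$ for $v\in V$, and substituting $v=A^{-1/2}w$) shows $((A^{1/2})')^{-1}\colon V'\to H$ restricts on $H\subseteq V'$ to $(A^*)^{-1/2}\colon H\to H$. Composing with the canonical iso $(A^*)^{-1/2}\colon H\to\Hcc{1/2}$ from the definition of the $\Hcc{\gamma}$-scale yields an iso $\psi\colon V'\to\Hcc{1/2}$ whose restriction to $H$ is $(A^*)^{-1}\colon H\to D(A^*)$. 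On the other hand, the form operator $\Acl^*\colon V\to V'$ associated with $a^*(u,v):=\overline{a(v,u)}$ (by the analog of Proposition~\ref{p:Aassoc}) gives an iso $(\Acl^*)^{-1}\colon V'\to V$ that also extends $(A^*)^{-1}$. Viewing both $\psi$ and $(\Acl^*)^{-1}$ as continuous maps $V'\to H$, they agree on the dense subspace $H\subseteq V'$ and hence coincide by uniqueness of continuous extension; comparing images in $H$ and the resulting norm equivalence gives $\Hcc{1/2}\cong V$, so $A^*$ is Kato. The converse follows from $(A^*)^*=A$.

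For the first equivalence, the forward direction is immediate from the symmetry. For the reverse, set $W:=H_{1/2}=\Hcc{1/2}$. The identity $\Real a(u,u)=\Real\Scalr{A^{1/2}u,(A^*)^{1/2}u}_H$ for $u\in D(A)$, combined with Cauchy--Schwarz, the hypothesis $\Norm{A^{1/2}u}_H\sim\Norm{(A^*)^{1/2}u}_H$ on $W$, and strong accretivity $\Real a(u,u)\ge c\Norm{u}_V^2$, yields $\Norm{u}_V\le C\Norm{u}_W$ on $D(A)$; extending by density gives $W\hookrightarrow V$ continuously, hence $V'\hookrightarrow W'$. Repeating the extension-uniqueness scheme with $A^{1/2}\colon W\to H$ in place of $A^{1/2}\colon V\to H$ produces an iso $\psi\colon W'\to W$ extending $(A^*)^{-1}\colon H\to D(A^*)$; restricting to $V'\subseteq W'$ and comparing with $(\Acl^*)^{-1}\colon V'\to V$, uniqueness forces $V\subseteq W$ as subsets of $H$. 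Combined with $W\subseteq V$, we get $V=W$ as sets, and the closed graph theorem supplies the norm equivalence. Thus $A$ is Kato. The main technical obstacle here is the bookkeeping of the identifications, ensuring in particular that $V'\hookrightarrow W'$ holds so that the composition $V'\to W'\to W$ is well-defined as a continuous map.

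Finally, the ``in particular'' clause follows from the first equivalence combined with formula~\eqref{e:Katodomw}: the latter covers $\gamma\in[0,1/2)$, the former covers $\gamma=1/2$, and the converse is immediate by specializing to $\gamma=1/2$.
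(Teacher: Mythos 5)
Your proof is correct, but it takes a genuinely different route from the paper. The paper disposes of the first equivalence in one line by citing Kato's Theorem~1 from~\cite{KatoPowersII} (which is exactly the statement $D(A^{1/2})=D((A^{*})^{1/2})\iff D(A^{1/2})\cong V$), obtains~\iref{e:Katodom} from~\iref{e:Katodomw} just as you do, and then deduces the adjoint symmetry \emph{from} the first equivalence together with the observation that $A^*$ is the operator associated with $a^*(u,v)=\overline{a(v,u)}$ on the same $V$ (so that ``$\Hcc{1/2}\cong H_{1/2}$'' is manifestly symmetric in $A$ and $A^*$). You invert this logical order: you prove the symmetry first, by comparing the two continuous maps $V'\to H$ that both extend $(A^*)^{-1}$ --- namely $(A^*)^{-1/2}\circ((A^{1/2})')^{-1}$ with range $\Hcc{1/2}$ and $(\Acl^*)^{-1}$ with range $V$ --- and then rerun the same extension-uniqueness scheme over the Gel'fand triple built on $W=H_{1/2}$ to get the reverse implication of the first equivalence; in effect you reprove Kato's Theorem~1 from scratch. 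What the paper's route buys is brevity; what yours buys is a self-contained argument (modulo $(A^{\alpha})^*=(A^*)^{\alpha}$, the density of $D(A)$ in $V$ and in $H_{1/2}$, and the standard Gel'fand-triple identifications) that makes the mechanism behind Kato's theorem visible. I checked the delicate points and they hold: $H$ is dense in $V'$ so the uniqueness-of-extension step is legitimate; the hypothesis $\Hcc{1/2}\cong H_{1/2}$ guarantees $D(A)\subseteq D((A^*)^{1/2})$, which is what justifies the identity $a(u,u)=\Scalr{A^{1/2}u,(A^*)^{1/2}u}$; and the embeddings $H\subseteq V'\subseteq W'$ are mutually consistent. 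One cosmetic slip: $A^{1/2}\colon V\to H$ is an isomorphism but is isometric only for the $H_{1/2}$-norm, not for the (merely equivalent) $V$-norm; nothing in your argument uses isometry, so this is harmless.
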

\begin{proof}
The first assertion is a special case of~\cite[Theorem~1]{KatoPowersII},
and~\eqref{e:Katodom} follows in view of~\eqref{e:Katodomw}. The
last assertion follows from the previous assertion and the observation
that $A^*$ is generated by the form $a^*(u,v)\coloneqq\overline{a(v,u)}$,
see e.g.~\cite[Proposition~1.24]{Ouhabaz}.
\end{proof}

The name in Definition~\ref{d:Kato}
is motivated by Kato's famous square root problem
originally posed in~\cite{KatoPowersI}:
to characterize the forms $a$ for which $A$ is a Kato operator.
According to Proposition~\ref{p:powers}, $A$ is a Kato operator
if $a$ is symmetric.
However, also many elliptic differential operators (even nonsymmetric) induce
Kato operators, see e.g.~\cite[Chapter~8]{Ouhabaz} and the references therein
as well as~\cite{Agranovich,EgertKatoMixed,Shamin}.
So the requirement that $A$ is a Kato operator is rather mild from
the viewpoint of applications we have in mind.

The reason why Kato operators are so useful to us in the Hilbert space setting
is the following. In applications to partial differential equations
the form $a$ and the space $V$ are usually explicitly given,
while the spaces $H_\alpha$ and the operators $A_\alpha$
are known only implicitly; in fact, usually even $D(A)=H_1$ is known only
implicitly. For Kato operators, we understand these auxiliary spaces
and operators in case $\alpha\ge-1/2$ much better as the following simple
observation shows.

\begin{proposition}\label{p:Ascript}
If (and only if) $A$ is a Kato operator, we have
\[V\cong H_{1/2}\text,\quad V'\cong\Hcb{1/2}\cong H_{1/2}'\cong H_{-1/2}\text,
\quad A_{-1/2}=\Acl\text.\]
Moreover, in this case, if $\gamma\in(0,1/2)$, then
\begin{equation}\label{e:Hgammabri}
H_{-\gamma}\cong\Hcb{\gamma}\cong
H_\gamma'\cong[H,V]_{2\gamma}'\cong[V',V]_{\frac12+\gamma}'\cong
[V',V]_{\frac12-\gamma}\cong[V',H]_{1-2\gamma}\text,
\end{equation}
$H=H_0\cong[V',V]_{1/2}$, and if $\gamma\in(1/2,1)$ then
\begin{equation}\label{e:Hinthalf}
H_{-\gamma}\cong\Hcb{\gamma}\cong
[D(A^*)',V']_{2-2\gamma}\cong[V,D(A^*)]_{2\gamma-1}'\text.
\end{equation}
Indepently of whether $A$ is a Kato operator, there holds
\begin{equation}\label{e:interpol}
H_{-\gamma}\cong\Hcb{\gamma}\cong[H,D(A^*)']_{\gamma}\cong[H,D(A^*)]_{\gamma}'
\quad\text{for all $\gamma\in(0,1)$.}
\end{equation}
\end{proposition}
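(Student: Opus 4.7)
The plan is to dispatch the first block of identifications using the definition of the Kato property together with the general duality~\eqref{e:dual}, and then to obtain the interpolation identities by repeated use of the reiteration formula~\eqref{e:reiterate} with well-chosen endpoints and exponents, combined with the duality theorem for complex interpolation (available here since every space involved is reflexive).

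The equivalence $V \cong H_{1/2} \iff A$ is a Kato operator is immediate from Definition~\ref{d:Kato}. Assuming Kato, the chain $V' \cong \Hcb{1/2} \cong H_{1/2}' \cong H_{-1/2}$ follows by dualizing $V \cong H_{1/2}$ and combining with~\eqref{e:dual} at $\gamma = 1/2$ and the Kato characterization $\Hcc{1/2} \cong H_{1/2}$. For $A_{-1/2} = \Acl$, I would observe that under these identifications both are bounded maps $V \to V'$; for $u \in D(A)$ the extrapolation construction gives $A_{-1/2}u = Au$, while~\eqref{e:Acdef} together with the defining relation $\Scalr{Au,\varphi} = a(u,\varphi)$ gives $\Acl u = Au$ as an element of $V'$. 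Density of $D(A)$ in $V$ (Proposition~\ref{p:Aassoc}) then forces agreement on all of $V$.

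For~\eqref{e:Hgammabri} with $\gamma \in (0,1/2)$, I would first write $H_{-\gamma} \cong \Hcb{\gamma} \cong H_\gamma'$ via~\eqref{e:dual} and the Kato identification $\Hcc{\gamma} \cong H_\gamma$ from~\eqref{e:Katodom}. Two applications of~\eqref{e:reiterate}, with $(\alpha,\beta,\theta) = (0, 1/2, 2\gamma)$ and $(-1/2, 1/2, 1/2+\gamma)$, using $H_{1/2} \cong V$ and $H_{-1/2} \cong V'$, give $H_\gamma \cong [H,V]_{2\gamma} \cong [V',V]_{1/2+\gamma}$; dualizing yields the corresponding primed forms, and complex interpolation duality together with $[X_0,X_1]_\theta = [X_1,X_0]_{1-\theta}$ converts $[V',V]_{1/2+\gamma}'$ into $[V',V]_{1/2-\gamma}$. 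The final $H_{-\gamma} \cong [V',H]_{1-2\gamma}$ and the auxiliary $H \cong [V',V]_{1/2}$ come from~\eqref{e:reiterate} with $(\alpha,\beta,\theta) = (-1/2, 0, 1-2\gamma)$ and $(-1/2, 1/2, 1/2)$ respectively. The identity~\eqref{e:Hinthalf} for $\gamma \in (1/2, 1)$ is obtained in the same spirit from~\eqref{e:reiterate} with $(-1, -1/2, 2-2\gamma)$, together with $H_{-1} \cong D(A^*)'$ from~\eqref{e:dual} and $H_{-1/2} \cong V'$ under Kato; the alternative form $[V,D(A^*)]_{2\gamma-1}'$ then follows by another invocation of the duality theorem. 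For the Kato-free~\eqref{e:interpol}, I would apply~\eqref{e:reiterate} in the $A^*$-scale with $(0,1,\gamma)$ to get $\Hcc{\gamma} \cong [H,D(A^*)]_\gamma$, then dualize.

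I expect the only real subtlety to be the identification $A_{-1/2} = \Acl$: one must reconcile two a~priori different constructions --- the abstract graph-closure extrapolation scale versus the form-induced operator --- and verify they produce the same map $V \to V'$. Once the Kato identifications $H_{1/2} \cong V$ and $H_{-1/2} \cong V'$ are in place, this reduces to a density argument on $D(A)$. The remaining interpolation identities are largely bookkeeping: the main care is in picking the correct triples $(\alpha,\beta,\theta)$ and in applying the complex-interpolation duality theorem in a form that covers all the relevant reflexive pairs.
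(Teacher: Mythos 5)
Your proposal is correct and follows essentially the same route as the paper: the first block from Definition~\ref{d:Kato}, \eqref{e:Katodom} and \eqref{e:dual}; the identity $A_{-1/2}=\Acl$ by density of $D(A)$ in $V$ since both operators are bounded $V\to V'$ and agree with $A$ on $D(A)$; and the interpolation identities by \eqref{e:reiterate} plus duality. Your version merely spells out the explicit triples $(\alpha,\beta,\theta)$ that the paper leaves implicit.
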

\begin{proof}
The first assertion is the definition of a Kato operator, the second follows
by using~\eqref{e:Katodom} and~\eqref{e:dual}.
The identity $A_{-1/2}=\Acl$ (which has to
be interpreted in terms of the canonical isomorphisms, of course),
follows from the density of $D(A)$ in $V$ (Proposition~\ref{p:Aassoc}),
since both operators are bounded from $V$ into $V'$ and coincide
on $D(A)$ with $A$. The formula~\eqref{e:interpol} is shown in a
straightforward manner with~\eqref{e:reiterate} and~\eqref{e:dual}
by inserting $D(A^*)=\Hcc{1}$ and $H=H_0=H_0'=\Hcc{0}=\Hcb{0}$. The
formulas~\eqref{e:Hgammabri}, $H\cong[V',V]_{1/2}$, and~\eqref{e:Hinthalf}
are shown similarly, by using also~\eqref{e:Katodom} and
$V\cong H_{1/2}\cong\Hcc{1/2}$.
\end{proof}

\begin{corollary}\label{c:VbriV}
The last three equality signs in~\eqref{e:Hgammabri} and $[V',V]_{1/2}\cong H$
are valid even if $A$ fails to be a Kato operator.
\end{corollary}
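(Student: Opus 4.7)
The plan is to exploit the fact that every isomorphism appearing in the statement involves only the Gel'fand triple $V\subseteq H\subseteq V'$ and the complex interpolation functor, and no space $H_\alpha$ tied specifically to $A$. The strategy is therefore to replace $A$ by a surrogate Kato operator built over the same Gel'fand triple and then invoke Proposition \ref{p:Ascript}.

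First, I would form the symmetric sesquilinear form $b_0(u,v)\coloneqq\tfrac12\bigl(a(u,v)+\overline{a(v,u)}\bigr)$ from Remark \ref{r:bM}. A one-line check gives $\Abs{b_0(u,v)}\le C\Norm u\Norm v$ and $b_0(u,u)=\Real a(u,u)\ge c\Norm u^2$, so $b_0$ satisfies both \eqref{e:continuous} and \eqref{e:strongacc} on the very same $V$. Let $\tilde A$ denote the operator in $H$ associated with $b_0$ via the duality \eqref{e:Acdef}. Since $b_0$ is symmetric, the last assertion of Proposition \ref{p:powers} yields $H_{1/2}^{\tilde A}\cong V$, i.e., $\tilde A$ is a Kato operator in the sense of Definition \ref{d:Kato}.

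Second, I would apply Proposition \ref{p:Ascript} to $\tilde A$. The parts of its conclusion that I need are the identification $H\cong[V',V]_{1/2}$ together with the last three isomorphisms of \eqref{e:Hgammabri}, namely
\[
[H,V]_{2\gamma}'\cong[V',V]_{\tfrac12+\gamma}'\cong[V',V]_{\tfrac12-\gamma}\cong[V',H]_{1-2\gamma}.
\]
None of these assertions mentions any space $H_\alpha^{\tilde A}$: they are statements purely about $V$, $H$, $V'$ and the complex interpolation functor. Therefore the isomorphisms derived for $\tilde A$ are literally the isomorphisms claimed for the original $A$, and the corollary follows.

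The step I expect to be most delicate is the reduction to the Gel'fand triple, i.e., making sure that substituting $b_0$ for $a$ does not change $V$, $H$, or the fixed embeddings $V\subseteq H\subseteq V'$ appearing on the right-hand sides of the claimed formulas. But this is immediate from the construction: $b_0$ is defined on the very same $V$ with a topologically equivalent norm, and the triple \eqref{e:Gelfand} was fixed at the outset of Section \ref{s:Hilbert} independently of the choice of form. Hence the interpolation spaces $[V',V]_\theta$, $[H,V]_\theta$, $[V',H]_\theta$ produced along the way coincide verbatim in the two settings, completing the argument.
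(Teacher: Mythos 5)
Your proposal is correct and is essentially the paper's own argument: the claimed isomorphisms involve only $V$, $H$, $V'$ and the interpolation functor, so one may replace $a$ by its symmetrization from Remark~\ref{r:bM} (the paper allows any $M\ge0$, your choice $M=0$ works since \eqref{e:strongacc} already holds), note that the associated self-adjoint operator is a Kato operator by Proposition~\ref{p:powers}, and read off the identities from Proposition~\ref{p:Ascript}. No substantive differences from the paper's proof.
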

\begin{proof}
The claimed equalities are actually independent of $A$ and $a$. Hence, in
view of Remark~\ref{r:bM}, we can assume that $a$ is symmetric, and in this
case Proposition~\ref{p:powers} implies that the associated
self-adjoint operator $A$ is a Kato operator, and the equalities follow
from Proposition~\ref{p:Ascript}.
\end{proof}

\begin{remark}\label{r:BVH}
Let $\gamma\in[0,1/2]$ and $A$ be a Kato operator.
Then $\lambda$ is a $\gamma$-weak eigenvalue of $A-B$
with corresponding eigenvector $u\ne0$ if and only if $u\in H_{1-\gamma}$ and
\[a(u,\varphi)-\Scalr{Bu,\varphi}=\lambda\Scalr{u,\varphi}\quad
\text{for all $\varphi\in V$.}\]
Moreover, the hypothesis of Proposition~\ref{p:BVH} is satisfied if
$B(V)\subseteq H$.

Indeed, the first assertion follows from the fact that $A_{-\gamma}$ is a
restriction of $A_{-1/2}=\Acl$. The second assertion follows from
$H_{1-\gamma}\subseteq H_{1/2}\cong V$.
\end{remark}

Now we consider the question whether the operator $\Acl$ is associated with
a strongly accretive continuous sesquilinear form $\acl$ on $H$.
We first have to equip $V'$ with an appropriate scalar product.
Our idea for this is to fix a scalar product $b$ on $V$
which generates a norm $\Norm u_b\coloneqq\sqrt{b(u,u)}$ on $V$
equivalent to $\Norm\parameter$.
Note that $\Acl\colon V\to V'$ is an isomorphism, and so
$\Norm u_{a,b}\coloneqq\Norm{\Acl^{-1}u}_b$ defines an
equivalent norm in $V'$. We denote by $X^{a,b}$ the Hilbert space which
we obtain from $V'$ when we pass to this equivalent norm
induced by the scalar product
\begin{equation}\label{e:Scaldef}
\Scalb{u,v}_{a,b}\coloneqq b(\Acl^{-1}u,\Acl^{-1}v)\quad
\text{for all $u,v\in X^{a,b}$.}
\end{equation}
Note that for any choice of $(a,b)$ as above we have $X^{a,b}\cong V'$.
However, it is crucial for our approach to distinguish the various scalar
products.

We emphasize that~\eqref{e:Scaldef} is actually the general form of a scalar
product on $V'$ generating an equivalent norm.
Indeed, if $\Scalb{\parameter,\parameter}_{V'}^*$
is any scalar product on $V'$ such that the generated norm
$\Norm u_{V'}^*\coloneqq\sqrt{\Scalb{u,u}_{V'}^*}$ is equivalent to
$\Norm\parameter_{V'}$, then $\Norm u_V^*\coloneqq\Norm{\Acl u}_{V'}^*$ is a
norm on $V$ which is equivalent to $\Norm\parameter$. Moreover, this norm
is generated by the scalar product
$b(u,v)\coloneqq\Scalb{\Acl u,\Acl v}_{V'}^*$,
and the corresponding scalar product~\eqref{e:Scaldef} is just the
scalar product $\Scalb{\parameter,\parameter}_{V'}^*$ we started with.

The following result characterizes those scalar products $b$ on $V$
(or, equivalently, those scalar products on $V'$) generating an equivalent norm
for which a ``strongly accretive'' continuous form $\acl$ on $H$ exists
such that $\Acl$ is associated to $\acl$.

\begin{proposition}\label{p:a0}
The following assertions are equivalent for every $c_1>0$.
\begin{enumerate}
\item\label{i:a0ex}
There exists a sesquilinear form
$\acl\colon H\times H\to\C$ such that there are constants $c_2,c_3\ge0$ with
\begin{equation}\label{e:a0c1c2}
\Real\acl(u,u)\ge c_1\Abs u^2\text,\quad
\Abs{\acl(u,v)}\le c_2\Abs u\Abs v\text,\quad
\Abs{\acl(u,u)}\le c_3\Abs u^2
\end{equation}
for all $u,v\in H$, and
\begin{equation}\label{e:a0def}
\acl(u,v)=\Scalb{\Acl u,v}_{a,b}\quad\text{for all $u\in V$, $v\in H$.}
\end{equation}
\item\label{i:bbdd}
There is $c_2\ge0$ with
\begin{equation}\label{e:bc1c2}
\Real b(u,A^{-1}u)\ge c_1\Abs u^2\text{ and }
\Abs{b(u,A^{-1}v)}\le c_2\Abs u\Abs v\quad\text{for all $u,v\in D(A)$.}
\end{equation}
\item\label{i:bsbdd}
There is $c_3\ge0$ with
\begin{equation}\label{e:bc1c3}
\Real b(u,A^{-1}u)\ge c_1\Abs u^2\text{ and }
\Abs{b(u,A^{-1}u)}\le c_3\Abs u^2\quad\text{for all $u\in D(A)$.}
\end{equation}
\end{enumerate}
One can equivalently replace $D(A)$ by $V$ in~\iref{i:bbdd}
and~\iref{i:bsbdd}, and the smallest possible constants $c_2,c_3$
in the above assertions are respectively the same.
If~\iref{i:a0ex} holds, then $\acl$
is the unique continuous function $\acl\colon H\times H\to\C$
satisfying~\eqref{e:a0def}, and $\Acl$ is the operator associate to $\acl$
in the Hilbert space $X^{a,b}$, that is, $u\in H$ belongs to $D(\Acl)\cong V$
if and only if there is some $w\in X^{a,b}$ with
$\acl(u,\varphi)=\Scalb{w,\varphi}_{a,b}$ for all $\varphi\in H$.
\end{proposition}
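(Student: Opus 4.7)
The plan rests on the elementary identity
\[
\Scalb{\Acl u, v}_{a,b} = b(\Acl^{-1}\Acl u, \Acl^{-1}v) = b(u, A^{-1}v)
\quad\text{for all $u \in V$ and $v \in H$,}
\]
which follows from~\eqref{e:Scaldef} together with the observation that $\Acl^{-1}v = A^{-1}v$ whenever $v \in H$ (indeed $A^{-1}v \in D(A) \subseteq V$ and $\Acl$ extends $A$, so $\Acl(A^{-1}v) = v$). Thus~\eqref{e:a0def} amounts to asking that $\acl$ be a continuous sesquilinear extension of $(u,v) \mapsto b(u,A^{-1}v)$ from $V \times H$ to $H \times H$ in the $H$-topology, subject to the estimates~\eqref{e:a0c1c2}; this framing drives the entire proof.

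First I would dispose of the remark that $D(A)$ may be replaced by $V$ in~\iref{i:bbdd} and~\iref{i:bsbdd}: this is immediate from density of $D(A)$ in $V$ (Proposition~\ref{p:Aassoc}) together with continuity of $(u,v) \mapsto b(u, A^{-1}v)$ and of $\Abs\parameter$ on $V$. Next,~\iref{i:bbdd}$\Rightarrow$\iref{i:bsbdd} is trivial with $c_3 \leq c_2$, while~\iref{i:bsbdd}$\Rightarrow$\iref{i:bbdd} follows by applying the complex polarization identity to the sesquilinear form $(u,v) \mapsto b(u, A^{-1}v)$, combined with the parallelogram identity, which yields $c_2 \leq 2 c_3$. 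For~\iref{i:a0ex}$\Rightarrow$\iref{i:bbdd}, I restrict~\eqref{e:a0c1c2} to pairs $(u,v) \in V \times H$ and apply the key identity. For the converse~\iref{i:bbdd}$\Rightarrow$\iref{i:a0ex}, the bound $\Abs{b(u,A^{-1}v)} \leq c_2 \Abs u \Abs v$ on $V \times V$ combined with density of $V$ in $H$ lets me extend the form by continuity to a sesquilinear $\acl\colon H \times H \to \C$ satisfying $\Abs{\acl(u,v)} \leq c_2 \Abs u \Abs v$, and the accretivity and diagonal estimates in~\eqref{e:a0c1c2} extend by the same continuity argument. That the smallest constants $c_2$ and $c_3$ in~\iref{i:a0ex},~\iref{i:bbdd},~\iref{i:bsbdd} are respectively equal follows automatically from the uniqueness of this continuous extension.

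Uniqueness of $\acl$ itself then holds because two continuous sesquilinear forms on $H \times H$ agreeing on the dense subset $V \times H$ must coincide. For the concluding characterization, the forward inclusion $V \subseteq D(\Acl)_{X^{a,b}}$ together with the agreement of the two operators on $V$ is nothing but~\eqref{e:a0def}. For the converse, given $u \in H$ and $w \in X^{a,b}$ with $\acl(u,\varphi) = \Scalb{w,\varphi}_{a,b}$ for all $\varphi \in H$, I set $u' := \Acl^{-1}w \in V$; by the forward inclusion already established, $\acl(u',\varphi) = \Scalb{w,\varphi}_{a,b}$ for all $\varphi \in H$ as well, so $\acl(u-u',\varphi) = 0$ for all $\varphi \in H$. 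Taking $\varphi = u-u' \in H$ and invoking the strong accretivity from~\eqref{e:a0c1c2} forces $c_1 \Abs{u-u'}^2 \leq \Real\acl(u-u',u-u') = 0$, so $u = u' \in V$ and $\Acl u = w$. The main technical obstacle is keeping careful track of the correct polarization constant and of which of the spaces $D(A), V, H, V', X^{a,b}$ each vector inhabits at each step; once the key identity is identified, the remaining arguments are routine density and continuity manipulations.
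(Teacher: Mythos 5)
Your argument is correct, and for the chain of equivalences among \iref{i:a0ex}, \iref{i:bbdd}, and \iref{i:bsbdd} it is essentially the paper's own proof: both rest on the identity $\Scalb{\Acl u,v}_{a,b}=b(\Acl^{-1}\Acl u,\Acl^{-1}v)=b(u,A^{-1}v)$, on the polarization identity for \iref{i:bsbdd}$\implies$\iref{i:bbdd}, and on density of $D(A)$ in $V$ and of $V$ in $H$ for the remaining transfers of the estimates and the uniqueness of $\acl$ (your explicit bound $c_2\le2c_3$ is a harmless refinement that the paper does not record, since the claim about optimal constants only compares the constants \emph{across} the three assertions, not $c_2$ with $c_3$). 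Where you genuinely depart from the paper is the final assertion that $\Acl$ is the operator associated to $\acl$ in $X^{a,b}$: the paper identifies the associated operator $L$ as the Friedrichs extension of $\Acl$ and then invokes sectoriality together with Theorem~VI.2.9 of \cite{Kato} to conclude $L=\Acl$, whereas you verify the two inclusions directly --- the forward one is literally \eqref{e:a0def}, and for the converse you compare $u$ with $u'\coloneqq\Acl^{-1}w\in V$ and use the accretivity bound $\Real\acl(u-u',u-u')\ge c_1\Abs{u-u'}^2$ to force $u=u'$ and $\Acl u=w$. Your route is more elementary and entirely self-contained, buying independence from the form-extension machinery at no extra cost; the paper's route is shorter on the page but leans on a nontrivial external theorem. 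One small point to make explicit in a polished write-up: in the step \iref{i:bbdd}$\implies$\iref{i:a0ex} you extend $(u,v)\mapsto b(u,A^{-1}v)$ from $V\times V$ to $H\times H$, so the validity of \eqref{e:a0def} on all of $V\times H$ (rather than $V\times V$) still needs the remark that, for fixed $u\in V$, both $v\mapsto\acl(u,v)$ and $v\mapsto\Scalb{\Acl u,v}_{a,b}$ are continuous in $v$ with respect to $\Abs\parameter$ --- the latter because the embedding $H\subseteq X^{a,b}$ is continuous; this is implicit in your framing of \eqref{e:a0def} as an extension problem from $V\times H$ but should be stated.
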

\begin{proof}
``\iref{i:a0ex}$\implies$\iref{i:bbdd}$\implies
$\iref{i:bsbdd}$\implies$\iref{i:bbdd}'':
Let $\acl_1\colon V\times V\to\C$ be the sesqulinear form defined by
$\acl_1(u,v)=\Scalb{\Acl u,v}_{a,b}$. Due to~\eqref{e:a0def}, we have
$\acl|_{V\times V}=\acl_1$. Since $V$ is dense in $H$, $\acl$ is uniquely
determined by its restriction to $V\times V$, and moreover, there is at most
one continuous function $\acl\colon H\times H\to\C$ with
$\acl|_{V\times V}=\acl_1$.

The operator $L$ associated with the form $\acl$ is the
Friedrichs extension of $\Acl$. Since $\Acl$ is sectorial in the
sense of~\cite{Henry}, it follows from~\cite[Theorem~VI.2.9]{Kato}
that $L=\Acl$.

Using~\eqref{e:Scaldef}, the fact that $A$ is the $H$-realization of $\Acl$,
and~\eqref{e:Acdef}, we calculate
\begin{equation}\label{e:a1equal}
\acl_1(u,v)=b(u,\Acl^{-1}v)=b(u,A^{-1}v)\quad\text{for all $u,v\in V$.}
\end{equation}
Hence,~\eqref{e:a0c1c2} implies~\eqref{e:bc1c2} and~\eqref{e:bc1c3}
even for all $u,v\in V$ (with the same constants $c_2,c_3$).
Clearly,~\eqref{e:bc1c2} implies~\eqref{e:bc1c3} with $c_3\coloneqq c_2$.
Conversely, if~\eqref{e:bc1c3} holds, then $\Abs{\acl_1(u,u)}\le c_3\Abs u^2$
for all $u\in D(A)$, and an application of the
polarization identity~\cite[(VI.1.1)]{Kato} for the sesquilinear form
$\acl_1$ shows that $\acl_1$ is bounded with respect to the norm
$\Abs\parameter$. This means that~\eqref{e:bc1c2} holds with some $c_2\ge0$.

``\iref{i:bbdd}$\implies$\iref{i:a0ex}'':
If~\eqref{e:bc1c2} holds then, since $D(A)$ is dense in $V$ and
since the left-hand side is continuous in $V$, we obtain that~\eqref{e:bc1c2}
holds even for all $u,v\in V$. (With the same argument also~\eqref{e:bc1c3}
holds for all $u,v\in V$.) Hence, by~\eqref{e:a1equal},
\[\Real\acl_1(u,u)\ge c_1\Abs u^2\quad\text{and}\quad
\Abs{\acl_1(u,v)}\le c_2\Abs u\Abs v\quad\text{for all $u,v\in V$.}\]
Since $V$ is dense in $H$, it follows that the sesqulinear form $\acl_1$ has a
continuous extension $\acl\colon H\times H\to\C$, and $\acl$
satisfies~\eqref{e:a0c1c2} (with the same constants $c_1,c_2>0$).
\end{proof}

Proposition~\ref{p:a0} motivates the following definition.

\begin{definition}
We call a scalar product $b$ on $V$ an \emph{$A$-Kato scalar product}
if the norm induced by $b$ is equivalent to $\Norm\parameter$, and
if~\eqref{e:bc1c2} or, equivalently,~\eqref{e:bc1c3} (or any of these
with $D(A)$ replaced by $V$) are satisfied with some $c_1>0$ and $c_2,c_3\ge0$.
\end{definition}

We will see that the existence of an $A$-Kato scalar product $b$ on $V$
is actually equivalent to the assertion that $A$ is a Kato operator with
a form on $V$, but at the moment we cannot use this.

\begin{corollary}\label{c:a0symm}
If $a$ is symmetric, then $b\coloneqq a$ is an $A$-Kato scalar product on $V$
for which $\acl$ in Proposition~\ref{p:a0} is given by
$\acl(u,v)=\Scalr{u,v}$ in $H$.
\end{corollary}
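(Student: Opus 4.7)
The plan is to verify the two assertions in turn: first, that $b\coloneqq a$ is an $A$-Kato scalar product, and second, that the associated form $\acl$ from Proposition~\ref{p:a0} is the scalar product of $H$. Both reduce to a single key computation, namely $a(u,A^{-1}v)=\Scalr{u,v}$ for $u,v\in V$, which combines the defining relation for $A$ with the symmetry of $a$.

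For the scalar-product property I would appeal to Remark~\ref{r:bM}: when $a$ is symmetric, the form $b_0$ defined there coincides with $a$, so $a$ is already a scalar product on $V$ whose induced norm is equivalent to $\Norm{\parameter}$. To see that \eqref{e:bc1c2} (and hence \eqref{e:bc1c3}) holds, I would apply the defining identity $\Scalr{Aw,\varphi}=a(w,\varphi)$ with $w\coloneqq A^{-1}v\in D(A)$ (for any $v\in H$) and $\varphi\coloneqq u\in V$, giving $\Scalr{v,u}=a(A^{-1}v,u)$. Symmetry of $a$ means $a(A^{-1}v,u)=\overline{a(u,A^{-1}v)}$, so conjugating yields $a(u,A^{-1}v)=\Scalr{u,v}$. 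Taking $v=u\in D(A)$ gives $b(u,A^{-1}u)=\Abs{u}^2$, so \eqref{e:bc1c2} and \eqref{e:bc1c3} hold with $c_1=c_2=c_3=1$, confirming that $b=a$ is an $A$-Kato scalar product.

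To identify $\acl$, I would use the formula $\acl_1(u,v)=b(u,A^{-1}v)$ derived in the proof of Proposition~\ref{p:a0} for $u,v\in V$. By the calculation above, this equals $\Scalr{u,v}$, which is already a continuous sesquilinear form on $H\times H$ satisfying \eqref{e:a0c1c2} with $c_1=c_2=c_3=1$. By the uniqueness of the continuous extension of $\acl_1$ to $H\times H$ asserted in Proposition~\ref{p:a0}, we conclude that $\acl(u,v)=\Scalr{u,v}$ on $H\times H$, as claimed. I do not anticipate a real obstacle; the only care needed is in tracking which argument must lie in $D(A)$ versus $V$ and in managing conjugations so that the antilinearity of the second slot is correctly handled.
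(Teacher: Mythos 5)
Your proof is correct and follows essentially the same route as the paper: the core step in both is the identity $a(u,A^{-1}v)=\Scalr{u,v}$ obtained from the defining duality of $A$ together with the symmetry of $a$ (the paper phrases it through $\Acl$ and the scalar product $\Scalb{\parameter,\parameter}_{a,a}$, but this is the same computation via~\eqref{e:a1equal}). The only cosmetic difference is that you verify assertion~\iref{i:bbdd} of Proposition~\ref{p:a0} directly while the paper verifies assertion~\iref{i:a0ex}; given the equivalence established there, this is immaterial.
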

\begin{proof}
It suffices to prove that assertion~\iref{i:a0ex} of Proposition~\ref{p:a0}
holds. We calculate for $\acl_1$ from the proof of Proposition~\ref{p:a0}
for all $u,v\in V$, using~\eqref{e:Scaldef} and~\eqref{e:Acdef}, that
\[\overline{\acl_1(u,v)}=\Scalb{v,\Acl u}_{a,a}=
a(\Acl^{-1}v,\Acl^{-1}\Acl u)=\Scalr{\Acl\Acl^{-1}v,u}=
\overline{\Scalr{u,v}}\text.\]
Hence, $\acl_1(u,v)=\Scalr{u,v}$ for all $u,v\in V$, and by continuity
and density $\acl_1$ has a unique continuous extension $\acl(u,v)=\Scalr{u,v}$
for all $u,v\in H$.
\end{proof}

\begin{remark}
Corollary~\ref{c:a0symm} may appear rather surprising.
The form $\acl$ generating $\Acl$ (and thus also defining $A$) is actually
independent of $a$ (and thus of $A$)!
However, the explanation for this apparent contradiction is that the scalar
product in $X^{a,a}\cong V'$ heavily depends on $a$, of course. Although the
generated norms are equivalent, they are not the same, in general.
\end{remark}

Note that the definition of an $A$-Kato scalar product does not make use
of any sesquilinear form.
Hence, it is remarkable that the following result makes sense
and holds for every densely defined operator $A$ of
positive type in a Banach space $H$.

\begin{lemma}\label{l:lowerKato}
For $\alpha,\beta\in\R$, let
$J=J_{\beta+1/2,\alpha+1/2}\colon H_{\beta+1/2}\to H_{\alpha+1/2}$
be the isometric isomorphism of Lemma~\ref{l:Jalpha}.
If $b$ is an $A_\alpha$-Kato scalar product on $H_{\alpha+1/2}$, then
\[B(u,v)\coloneqq b(Ju,Jv)\]
defines an $A_\beta$-Kato scalar product on $H_{\beta+1/2}$ with the same
constants in the respective inequalities of~\eqref{e:bc1c2}
and~\eqref{e:bc1c3} and with the same equivalence constants for the norm.
\end{lemma}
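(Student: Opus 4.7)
The strategy is to transport the $A_\alpha$-Kato conditions on $H_{\alpha+1/2}$ to $H_{\beta+1/2}$ via $J$, using the intertwining between $A_\alpha$ and $A_\beta$ and the pointwise compatibility of the $J_{\cdot,\cdot}$ across adjacent levels of the scale.

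First I would check that $B$ is indeed a scalar product on $H_{\beta+1/2}$, which is immediate from linearity and bijectivity of $J$, and verify the norm equivalence with the same constants. For the latter, note $\Norm u_B=\Norm{Ju}_b$, while isometry of $J$ gives $\Norm{Ju}_{H_{\alpha+1/2}}=\Norm u_{H_{\beta+1/2}}$; thus any inequality $d_1\Norm{\parameter}_{H_{\alpha+1/2}}\le\Norm{\parameter}_b\le d_2\Norm{\parameter}_{H_{\alpha+1/2}}$ transfers verbatim to $B$.

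The heart of the argument is transforming the Kato estimates. Swapping labels in Lemma~\ref{l:Jalpha} yields $A_\beta=J_{\beta,\alpha}^{-1}A_\alpha J_{\beta+1,\alpha+1}$, which inverts to the intertwining
\[
J_{\beta+1,\alpha+1}\,A_\beta^{-1}=A_\alpha^{-1}\,J_{\beta,\alpha}
\quad\text{as maps }H_\beta\to H_{\alpha+1}.
\]
Moreover, the restriction clause of Lemma~\ref{l:Jalpha} gives $J_{\beta+1,\alpha+1}=J|_{H_{\beta+1}}$ and, equivalently, $J_{\beta,\alpha}$ is the (continuous) extension of $J$ from $H_{\beta+1/2}$ to $H_\beta$, so for $u\in H_{\beta+1}$ the three values $Ju$, $J_{\beta,\alpha}u$, $J_{\beta+1,\alpha+1}u$ coincide as elements of the dense chain. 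Consequently, for $u,v\in D(A_\beta)=H_{\beta+1}$,
\[
B(u,A_\beta^{-1}v)=b\bigl(Ju,\,JA_\beta^{-1}v\bigr)
=b\bigl(J_{\beta,\alpha}u,\,A_\alpha^{-1}J_{\beta,\alpha}v\bigr)
=b(u',A_\alpha^{-1}v'),
\]
where $u'\coloneqq J_{\beta,\alpha}u$ and $v'\coloneqq J_{\beta,\alpha}v$ lie in $H_{\alpha+1}$ and satisfy $\Norm{u'}_{H_\alpha}=\Norm u_{H_\beta}$, $\Norm{v'}_{H_\alpha}=\Norm v_{H_\beta}$. Applying the assumed inequalities~\eqref{e:bc1c2} and~\eqref{e:bc1c3} for $b$ and $A_\alpha$ to $u',v'$ directly yields the corresponding inequalities for $B$ and $A_\beta$ with identical constants $c_1,c_2,c_3$.

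The only real bookkeeping obstacle is making sure that the three occurrences of $J$ at levels $\beta, \beta+1/2, \beta+1$ are properly identified; once that compatibility is pinned down, the manipulation is purely algebraic. To conclude, by the equivalence clause of Proposition~\ref{p:a0}, it suffices to verify these inequalities on $D(A_\beta)$, and one may then extend by density to all of $H_{\beta+1/2}$. Since the constants match and the norm equivalence constants match, $B$ is an $A_\beta$-Kato scalar product on $H_{\beta+1/2}$ as required.
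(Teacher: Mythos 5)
Your proof is correct and follows essentially the same route as the paper's: the key identity $JA_\beta^{-1}=J_{\beta+1,\alpha+1}A_\beta^{-1}=A_\alpha^{-1}J_{\beta,\alpha}$ from Lemma~\ref{l:Jalpha}, combined with the isometry $\Norm{Ju}_{H_{\alpha+1/2}}=\Norm{u}_{H_{\beta+1/2}}$ (and $\Norm{J_{\beta,\alpha}u}_{H_\alpha}=\Norm{u}_{H_\beta}$), which transports~\eqref{e:bc1c2} and~\eqref{e:bc1c3} with unchanged constants. Your extra care in identifying the various realizations of $J$ and in noting that it suffices to check the inequalities on $D(A_\beta)$ is exactly the bookkeeping the paper leaves as ``straightforward.''
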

\begin{proof}
By Lemma~\ref{l:Jalpha}, $J$ is the $H_{\alpha+1/2}$-realization of the
isometric isomorphism $J_{\beta,\alpha}\colon H_\beta\to H_\alpha$,
$J_{\beta+1,\alpha+1}$ is a restriction of $J$, and
$A_\beta=J_{\beta,\alpha}^{-1}A_\alpha J_{\beta+1,\alpha+1}$.
Hence,
$JA_\beta^{-1}=J_{\beta+1,\alpha+1}A_\beta^{-1}=A_\alpha^{-1}J_{\beta,\alpha}$.
We obtain
\[B(u,A_\beta^{-1}v)=b(Ju,A_\alpha^{-1}(Jv))\quad\text{and}\quad
\Norm{Ju}_{H_{\alpha+1/2}}=\Norm u_{H_{\beta+1/2}}\]
for all $u,v\in H_{\beta+1/2}$ from which the assertion
follows straightforwardly.
\end{proof}

\begin{lemma}\label{l:induced}
Let $\alpha,\beta\in\R$ and $J=J_{\beta,\alpha}\coloneqq H_\beta\to H_\alpha$
denote the isometric isomorphism of Lemma~\ref{l:Jalpha}.
Let $b_\alpha$ be a scalar product on $H_\alpha$ generating an equivalent norm
and $a_\alpha$ a sesquilinear form on $H_{\alpha+1/2}$ satisfying
\begin{equation*}
\Abs{a_\alpha(u,v)}\le
C\Norm u_{H_{\alpha+1/2}}\Norm v_{H_{\alpha+1/2}}\text,\quad
\Real a_\alpha(u,u)\ge c\Norm u_{H_{\alpha+1/2}}^2\quad
\text{for all $u,v\in H_{\alpha+1/2}$.}
\end{equation*}
Then
\[b_\beta(u,v)\coloneqq b_\alpha(Ju,Jv)\text,\qquad
a_\beta(u,v)\coloneqq a_\alpha(Ju,Jv)\]
define a scalar product on $H_\beta$ generating an equivalent norm with the
same equivalence constants as for $b_\alpha$ and a sesquilinear form on
$H_{\beta+1/2}$, respectively, such that
\begin{equation*}
\Abs{a_\beta(u,v)}\le C\Norm u_{H_{\beta+1/2}}\Norm v_{H_{\beta+1/2}}\text,
\quad
\Real a_\beta(u,u)\ge c\Norm u_{H_{\beta+1/2}}^2\quad
\text{for all $u,v\in H_{\beta+1/2}$.}
\end{equation*}
Moreover, if $A_\alpha$ is the operator associated to $a_\alpha$
with the scalar product $b_\alpha$, then $A_\beta$ is the
operator associated to $a_\beta$ with the scalar product $b_\beta$.
\end{lemma}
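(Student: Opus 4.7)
The plan is a direct transport-of-structure argument along the isometric isomorphism $J = J_{\beta,\alpha}$. The ingredient needed from Lemma \ref{l:Jalpha} is that, for each $\gamma \ge 0$, $J$ restricts to an isometric isomorphism $J_{\beta+\gamma,\alpha+\gamma} : H_{\beta+\gamma} \to H_{\alpha+\gamma}$; this will be used for $\gamma = 1/2$ (the form domain) and for $\gamma = 1$ (the operator domain). The intertwining relation from the same lemma, $A_\alpha = J_{\alpha,\beta}^{-1} A_\beta J_{\alpha+1,\beta+1}$, rearranges to $J A_\beta u = A_\alpha J u$ for every $u \in D(A_\beta) = H_{\beta+1} = J^{-1}(H_{\alpha+1})$.

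First I would check the claimed inequalities for $a_\beta$ and the norm equivalence for $b_\beta$ with the advertised constants; these follow by plain substitution into the corresponding estimates for $a_\alpha$ and $b_\alpha$, for instance
\[
\Abs{a_\beta(u,v)} = \Abs{a_\alpha(Ju, Jv)} \le C \Norm{Ju}_{H_{\alpha+1/2}} \Norm{Jv}_{H_{\alpha+1/2}} = C \Norm u_{H_{\beta+1/2}} \Norm v_{H_{\beta+1/2}},
\]
and analogously for coercivity and for the upper/lower bounds defining the norm equivalence of $b_\beta$. The key point is that the isometry of $J|_{H_{\beta+1/2}}$ onto $H_{\alpha+1/2}$ keeps all constants unchanged.

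For the operator statement I would verify both directions of the form-theoretic characterization: for $u \in H_{\beta+1/2}$ and $w \in H_\beta$, the equality $A_\beta u = w$ holds if and only if $b_\beta(w, v) = a_\beta(u, v)$ for every $v \in H_{\beta+1/2}$. In the forward direction, $u \in H_{\beta+1}$ gives $Jw = A_\alpha Ju$ by the intertwining, and then
\[
b_\beta(w, v) = b_\alpha(Jw, Jv) = b_\alpha(A_\alpha Ju, Jv) = a_\alpha(Ju, Jv) = a_\beta(u, v),
\]
using the hypothesis that $A_\alpha$ is associated to $a_\alpha$ via $b_\alpha$. Conversely, starting from such a $w$, the surjectivity of $J$ on $H_{\beta+1/2}$ rewrites the assumed identity as $b_\alpha(Jw, z) = a_\alpha(Ju, z)$ for every $z \in H_{\alpha+1/2}$; the hypothesis on $A_\alpha$, applied in the opposite direction, yields $Ju \in D(A_\alpha) = H_{\alpha+1}$ with $A_\alpha Ju = Jw$, so $u \in H_{\beta+1} = D(A_\beta)$ and $A_\beta u = w$. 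The only point needing care is the bookkeeping of the restrictions of $J$ to the half-integer and integer order spaces, which is exactly what Lemma \ref{l:Jalpha} provides; beyond that, the argument is pure substitution and presents no genuine obstacle.
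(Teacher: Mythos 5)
Your proof is correct and follows essentially the same route as the paper: transport of the forms and scalar product along the isometric realizations $J_{\beta+1/2,\alpha+1/2}$ and $J_{\beta+1,\alpha+1}$ of $J$ from Lemma~\ref{l:Jalpha}, together with the intertwining relation $JA_\beta u=A_\alpha Ju$ and the resulting identity $b_\beta(A_\beta u,v)=a_\beta(u,v)$. You merely spell out the ``both directions'' verification of the associated-operator characterization that the paper compresses into ``the assertions follow straightforwardly.''
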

\begin{proof}
Note that \(J_{1/2}\coloneqq J_{\beta+1/2,\alpha+1/2}\colon
H_{\beta+1/2}\to H_{\alpha+1/2}\) and
$J_1\coloneqq J_{\beta+1,\alpha+1}\colon H_{\beta+1}\to H_{\alpha+1}$
are norm preserving isomorphisms and the corresponding
$H_{\alpha+1/2}$-realization and $H_{\alpha+1}$-realization of $J$,
respectively, and $A_\beta=J^{-1}A_\alpha J_1$. In particular,
\begin{equation}\label{e:Dinduced}
\begin{gathered}
D(A_\beta)=H_{\beta+1}=J^{-1}(H_{\alpha+1})=J^{-1}(D(A_\alpha))\text,\quad
JA_\beta u=A_\alpha Ju\quad(u\in D(A_\beta))\\
H_{\beta+1/2}=J^{-1}(H_{\alpha+1/2})\text,\quad
\Norm{Ju}_{H_{\alpha+1/2}}=\Norm u_{H_{\beta+1/2}}\quad
(u\in H_{\beta+1/2})\text.
\end{gathered}
\end{equation}
Hence,
$b_\beta(A_\beta u,v)=b_\alpha(A_\alpha Ju,Jv)=a_\alpha(Ju,Jv)=a_\beta(u,v)$
for all $u\in D(A_\beta)$, $v\in H_{\beta+1/2}$. Using this
and~\eqref{e:Dinduced}, the assertions follow straightforwardly.
\end{proof}

Now we can formulate the main result of this section.
It states that if $A$ is a Kato operator, then also each of the operators
$A_\alpha$ is a Kato operator if $H_\alpha$ is equipped with an
appropriate scalar product. As a by-result, we obtain some equivalent
characterizations of Kato operators, in particular the previously mentioned
observation that Kato operators are characterized by the existence of an
$A$-Kato scalar product.

\begin{theorem}\label{t:Kato}
Assume the general hypotheses of this section, that is, $A$ is associated
to a sesquilinear form $a$ on $V$ satisfying~\eqref{e:continuous}
and~\eqref{e:strongacc}. Then the following assertions are equivalent:
\begin{enumerate}
\item\label{i:AKato}
$A$ is a Kato operator with a form on $V$, that is, $H_{1/2}\cong V$.
\item\label{i:arendtiso}
$A^{1/2}$ is an isomorphism of $V$ onto $H$.
\item\label{i:Aprod}
There is an $A$-Kato scalar product on $V$.
\item\label{i:Acaccretive}
There is a scalar product on $V'$ generating an equivalent norm
and a sesquilinear form $\acl\colon H\times H\to\C$
satisfying~\eqref{e:a0c1c2} with $c_1>0$ and $c_2,c_3\ge0$
such that $\Acl$ is associated to $\acl$.
\item\label{i:Aimag}
There are $\varepsilon>0$ and $M\ge0$ such that for all
$s\in(-\varepsilon,\varepsilon)$ the operator $\Acl^{is}$ is bounded in $V'$
with $\Norm{\Acl^{is}}\le M$.
\item\label{i:domain}
$D(\Acl^{1/2})\cong H$.
\item\label{i:Achalf}
$\Acl=A_{-1/2}$.
\item\label{i:AcKato}
There is a scalar product on $V'$ generating an equivalent norm
such that $\Acl$ is a Kato operator with a form on $H$.
\item\label{i:some}
For some $\alpha\in\R$ the space $H_\alpha$ can be equipped with a scalar
product generating an equivalent norm such that the operator
$A_\alpha$ is a Kato operator with a form on $H_{\alpha+1/2}$.
\item\label{i:all}
For every $\alpha\in\R$ the space $H_\alpha$ can be equipped with a scalar
product generating an equivalent norm such that the operator
$A_\alpha$ is a Kato operator with a form on $H_{\alpha+1/2}$.
\end{enumerate}
In each case, $b(u,v)\coloneqq\Scalr{A^{1/2}u,A^{1/2}v}$ defines
an $A$-Kato scalar product on $V$, and  there is $M>0$ such that
for all $s\in\R$ the operator $\Acl^{is}$ is bounded in $V'$ with
$\Norm{\Acl^{is}}\le Me^{\pi\Abs s/2}$. One can even choose $M=1$
if one equips $V'$ with the equivalent norm from~\iref{i:Acaccretive}.
\end{theorem}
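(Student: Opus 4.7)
The plan is to arrange the ten equivalences in a cyclic chain through items~\iref{i:AKato}--\iref{i:domain}, attach~\iref{i:Achalf} separately via Proposition~\ref{p:Ascript}, and then handle~\iref{i:AcKato}--\iref{i:all} by transport along the extrapolation scale. The easy pieces I would dispatch immediately are~\iref{i:AKato}$\iff$\iref{i:arendtiso}, read off from the definition $H_{1/2}=D(A^{1/2})$ with norm $\Abs{A^{1/2}\cdot}$;~\iref{i:Aprod}$\iff$\iref{i:Acaccretive}, which is Proposition~\ref{p:a0};~\iref{i:AKato}$\Rightarrow$\iref{i:Achalf}, which is Proposition~\ref{p:Ascript}; and~\iref{i:Achalf}$\Rightarrow$\iref{i:AKato}, which is the domain comparison $V=D(\Acl)=D(A_{-1/2})=H_{1/2}$.

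For~\iref{i:AKato}$\Rightarrow$\iref{i:Aprod}, I would verify that the candidate scalar product $b(u,v):=\Scalr{A^{1/2}u,A^{1/2}v}$ announced in the theorem's final assertion is $A$-Kato. The norm it induces on $V$ is $\Abs{A^{1/2}\cdot}=\Norm{\cdot}_{H_{1/2}}$, which by~\iref{i:AKato} is equivalent to $\Norm{\cdot}$. For $u\in D(A)$ the substitution $w:=A^{-1/2}u\in H_{3/2}$ satisfies $Aw=A^{1/2}u$, so that
\[b(u,A^{-1}u)=\Scalr{A^{1/2}u,A^{-1/2}u}=\Scalr{Aw,w}=a(w,w).\]
Combining~\eqref{e:strongacc},~\eqref{e:continuous}, and the norm equivalence $\Norm w\sim\Abs{A^{1/2}w}=\Abs u$ that follows from~\iref{i:AKato} yields both inequalities in~\eqref{e:bc1c3}, and Proposition~\ref{p:a0} then upgrades this to~\iref{i:Acaccretive}.

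The cyclic chain is closed through the fractional-power route. Assuming~\iref{i:Acaccretive}, $\Acl$ is $m$-accretive in the Hilbert space $X^{a,b}$, so Kato's Hilbert-space estimate (the bound used in Proposition~\ref{p:powers}) applied to $\Acl$ on $X^{a,b}$ yields $\Norm{\Acl^{is}}_{X^{a,b}}\le e^{\pi\Abs s/2}$ for every $s\in\R$; passing to the equivalent norm of $V'$ produces the globally uniform bound $\Norm{\Acl^{is}}_{V'}\le Me^{\pi\Abs s/2}$ promised at the end of the theorem, hence~\iref{i:Aimag}. From bounded imaginary powers, the complex-interpolation characterization of fractional-power domains gives $D(\Acl^{1/2})\cong[V',D(\Acl)]_{1/2}=[V',V]_{1/2}$, and Corollary~\ref{c:VbriV} identifies the right-hand side with $H$ unconditionally, proving~\iref{i:domain}. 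The main technical obstacle is~\iref{i:domain}$\Rightarrow$\iref{i:AKato}: writing $\Acl^{-1/2}$ via the Dunford calculus and using the resolvent identity $(\lambda-\Acl)^{-1}v=(\lambda-A)^{-1}v$ for $v\in H$ (valid because $A$ is the $H$-realization of $\Acl$), one finds $\Acl^{-1/2}v=A^{-1/2}v$ as elements of $H$; combined with the isomorphism $\Acl^{-1/2}\colon V'\to H$ furnished by~\iref{i:domain}, this yields $\Abs{A^{-1/2}v}\sim\Norm v_{V'}$ for $v\in H$, so $V'\cong H_{-1/2}$ via the natural embedding. Dualizing and using~\eqref{e:dual} gives $V\cong\Hcc{1/2}$, which is the Kato property of $A^*$; by the Proposition following Definition~\ref{d:Kato}, $A$ is then itself a Kato operator, proving~\iref{i:AKato}.

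For the transport equivalences~\iref{i:AcKato},~\iref{i:some}, and~\iref{i:all}, I would apply Lemma~\ref{l:induced} with the norm-preserving isomorphisms $J_{\alpha,0}\colon H_\alpha\to H$ of Lemma~\ref{l:Jalpha}. Starting from~\iref{i:AKato}, the ambient scalar product on $H$ and the form $a$ on $V=H_{1/2}$ pull back to a scalar product on $H_\alpha$ and a form on $H_{\alpha+1/2}$ whose associated operator is $A_\alpha$ by the last sentence of Lemma~\ref{l:induced}; the Kato identification $(H_\alpha)_{1/2}\cong H_{\alpha+1/2}$ is supplied by Lemma~\ref{l:main}, so $A_\alpha$ is a Kato operator on the transported Hilbert space, giving~\iref{i:all}. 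The implication~\iref{i:all}$\Rightarrow$\iref{i:some} is trivial, and~\iref{i:some}$\Rightarrow$\iref{i:AKato} runs the same transport backwards through $J_{0,\alpha}$. Finally,~\iref{i:AKato}$\iff$\iref{i:AcKato} is the special case $\alpha=-1/2$, using Proposition~\ref{p:Ascript} to identify $A_{-1/2}$ with $\Acl$ under~\iref{i:AKato}; conversely,~\iref{i:AcKato} provides a form on $H$ generating $\Acl$ (hence~\iref{i:Acaccretive}) and the identification $(V')_{1/2}\cong H$ (hence~\iref{i:domain}), closing the cycle.
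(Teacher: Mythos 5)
Your treatment of items \iref{i:AKato}--\iref{i:AcKato} is essentially the paper's: the cycle \iref{i:AKato}$\iff$\iref{i:arendtiso}, \iref{i:AKato}$\Rightarrow$\iref{i:Aprod} via $b(u,v)=\Scalr{A^{1/2}u,A^{1/2}v}$, Proposition~\ref{p:a0}, Kato's bound for the imaginary powers of $\Acl$ in $X^{a,b}$, and the interpolation identity $D(\Acl^{1/2})\cong[V',V]_{1/2}\cong H$ all match. The one place you genuinely deviate is the return to \iref{i:AKato}: the paper closes the cycle through \iref{i:domain}$\Rightarrow$\iref{i:Achalf}$\Rightarrow$\iref{i:AKato} (using the argument from Proposition~\ref{p:Ascript}), whereas you go \iref{i:domain}$\Rightarrow$\iref{i:AKato} directly by showing $\Acl^{-1/2}|_H=A^{-1/2}$, deducing $V'\cong H_{-1/2}$, dualizing via~\eqref{e:dual} to get $V\cong\Hcc{1/2}$, and invoking the equivalence of the Kato property for $A$ and $A^*$. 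That route is correct and, if anything, more explicit than the paper's; note only that with it \iref{i:Achalf} still needs to be attached, which you do separately and correctly.

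The gap is in ``\iref{i:some}$\Rightarrow$\iref{i:AKato} runs the same transport backwards through $J_{0,\alpha}$.'' Lemma~\ref{l:induced} transports the level-$\alpha$ data to a scalar product $b_0$ on $H$ and a form $a_0$ on $H_{1/2}$ with $A$ associated to $a_0$ with respect to $b_0$ --- but the target statement \iref{i:AKato} is $H_{1/2}\cong V$, and the transported data never mentions $V$ or $a$. You cannot pass from ``$A$ is associated to \emph{some} continuous strongly accretive form on $H_{1/2}$ with respect to \emph{some} equivalent scalar product'' to ``$H_{1/2}\cong V$'' by uniqueness of the form domain: the form domain of $A$ is the completion of $D(A)$ under $(\Real\langle Au,u\rangle+\Abs u^2)^{1/2}$ for the chosen scalar product, and changing the scalar product changes this norm; asserting its equivalence with $\Norm\parameter_V$ is exactly the Kato property you are trying to prove, so the step is circular as written. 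Nor can you feed the transported data into Proposition~\ref{p:a0}: the scalar product $\tilde b(x,y)\coloneqq b_0(A^{1/2}x,A^{1/2}y)$ you obtain satisfies~\eqref{e:bc1c2}, but it lives on $H_{1/2}$ with norm equivalent to $\Abs{A^{1/2}\parameter}$, not on $V$ with norm equivalent to $\Norm\parameter$, so it is not an $A$-Kato scalar product in the sense required for \iref{i:Aprod} unless $H_{1/2}\cong V$ already. The paper's proof is organized precisely to avoid this trap: it transports the \emph{$A_\alpha$-Kato scalar products} via Lemma~\ref{l:lowerKato} (whose defining inequalities~\eqref{e:bc1c2} refer only to $D(A_\beta)$ and the norm of $H_\beta$, not to any form domain) and then re-runs the already established chain \iref{i:Aprod}$\Rightarrow\dotsb\Rightarrow$\iref{i:AKato} at the target level, rather than transporting the forms themselves and reading off the conclusion. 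You need to supply the missing bridge from the transported structure back to the originally given space $V$; as it stands, the backward direction of the transport equivalences is unproved.
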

\begin{proof}
``\iref{i:AKato}$\iff$\iref{i:arendtiso}'' follows from the fact that
$A^{1/2}$ is an isomorphism of $H_{1/2}$ onto $H_0=H$.

``\iref{i:AKato},\iref{i:arendtiso}$\implies$\iref{i:Aprod}'':
We show that $b(u,v)\coloneqq\Scalr{A^{1/2}u,A^{1/2}v}$ is an
$A$-Kato scalar product on $V$.
By hypothesis, there are constants $C_1,C_2>0$ with
\[C_1\Abs{A^{1/2}v}\le\Norm v\le C_2\Abs{A^{1/2}v}\quad
\text{for all $v\in V\cong H_{1/2}$.}\]
Hence, in this case we calculate for every $u\in V$, noting that $u\in H_{1/2}$
and thus $v\coloneqq A^{-1/2}u\in H_1$, that
\[b(u,A^{-1}u)=\Scalr{A^{1/2}u,A^{1/2}A^{-1}u}=\Scalr{Av,v}=a(v,v)\]
and $\Abs{A^{1/2}v}=\Abs u$. This implies on the one hand that
\[\Real b(u,A^{-1}u)\ge c\Norm v\ge C_1c\Abs{A^{1/2}v}=C_1c\Abs u\text,\]
and on the other hand
\[\Abs{b(u,A^{-1}u)}\le C\Norm v^2\le C_2^2C\Abs{A^{1/2}v}=
C_2^2C\Abs u^2\text.\]
Hence,~\eqref{e:bc1c3} holds with $c_1\coloneqq C_1c$ and
$c_3\coloneqq C_2^2C$.

``\iref{i:Aprod}$\iff$\iref{i:Acaccretive}'' is the content of
Proposition~\ref{p:a0} and its preceding remarks.

``\iref{i:Acaccretive}$\implies$\iref{i:Aimag}'': Equipping the space $V'$
with the scalar product and corresponding norm from~\iref{i:Acaccretive},
we obtain even the uniform estimate $\Norm{\Acl^{is}}\le e^{\pi\Abs s/2}$
for all $s\in\R$ by applying Proposition~\ref{p:powers} with
$(A,a,H)$ replaced by $(\Acl,\acl,V')$.

``\iref{i:Aimag}$\implies$\iref{i:domain}'':
Applying~\cite[Theorem~1.15.3]{Triebel} with the operator $\Acl$ in $V'$,
we find $D(\Acl^{1/2})\cong[D(\Acl^0),D(\Acl^1)]_{1/2}=[V',V]_{1/2}$.
Hence,~\iref{i:domain} follows from Corollary~\ref{c:VbriV}.

``\iref{i:domain}$\implies$\iref{i:Achalf}'' was shown in the proof of
Proposition~\ref{p:Ascript}.

``\iref{i:Achalf}$\implies$\iref{i:AKato}'' follows from the
calculation $H_{1/2}=D(A_{-1/2})=D(\Acl)\cong V$.

``\iref{i:Acaccretive},\iref{i:domain}$\iff$\iref{i:AcKato}'' is our
definition of a Kato operator.

So far, we have shown the equivalences of all assertions of
Theorem~\ref{t:Kato} with the exception of~\iref{i:some} and~\iref{i:all}.
Since ``\iref{i:all}$\implies$\iref{i:AKato}'' and
``\iref{i:AKato}$\implies$\iref{i:some}'' are obvious, we now have to show
``\iref{i:some}$\implies$\iref{i:all}''. Thus, let $\alpha$ denote the number
for which the assertion~\iref{i:some} holds, and let $\beta\in\R$ be arbitrary.
By our choice of $\alpha$, the general hypotheses of our section
hold with $(H_\alpha,A_\alpha)$ in place of $(H,A)$, and by
Lemma~\ref{l:induced}, the same is true for $(H_\beta,A_\beta)$.
Hence, using the already shown implications
``\iref{i:AKato}$\implies$\iref{i:Aprod}'' with $(H_\alpha,A_\alpha)$
and ``\iref{i:Aprod}$\implies$\iref{i:AKato}'' with $(H_\beta,A_\beta)$,
respectively, we obtain in view of Lemma~\ref{l:lowerKato}
that $A_\beta$ is a Kato operator.
\end{proof}

Some of the direct equivalences of Theorem~\ref{t:Kato} are
contained implicitly in~\cite[Section~5.5.2]{ArendtSurvey}.
For instance, the equivalent characterization~\iref{i:domain}
of Kato operators was obtained from these results and applied
in~\cite{Agranovich}. However, it seems that the idea to employ appropriate
scalar products on $V$ appears to be new.

\section{Applications to Semilinear Parabolic PDEs}\label{s:pde}

\subsection{Superposition Operators in $L_2$ and Sobolev Spaces}

Let $\Omega\subseteq\R^d$ be open and $H\coloneqq L_2(\Omega,\C^n)$.
In the following, we use the scalar product (and respective dual pairing)
\[\Scalr{u,v}\coloneqq\int_\Omega u(x)\cdot\overline{v(x)}\dx\text.\]
In case $d\ge3$, we put $p_*\coloneqq\frac{2d}{d-2}$; in case $d\le2$,
we fix an arbitrary $p_*\in(2,\infty)$. Let $V\subseteq W^{1,2}(\Omega,\C^n)$
be a closed subspace which is dense in $H$. We assume that $\Omega$ is such
that Sobolev's embedding theorem is valid in the sense that there is a
continuous embedding $V\subseteq L_{p_*}(\Omega,\C^n)$.

\begin{remark}
For the case that $\Omega$ is such that the dense embedding
$V\subseteq L_{p_*}(\Omega,\C^n)$ holds only with some smaller power
$p_*\in(2,\infty)$, all subsequent considerations hold as well
with this choice of $p_*$.
\end{remark}

\begin{lemma}\label{l:Lq}
Let $A$ be a Kato operator.
\begin{enumerate}
\item\label{i:Lqless}
Let $\gamma\in[0,1/2]$.
\begin{enumerate}
\item\label{ii:Lqbriless}
We have a continuous embedding
\(L_{q_\gamma}(\Omega,\C^n)\subseteq
H_\gamma'\cong\Hcb{\gamma}\cong H_{-\gamma}\)
with
\begin{equation}\label{e:qdef}
q_\gamma\coloneqq\Bigl(\frac12+\gamma-\frac{2\gamma}{p_*}\Bigr)^{-1}\quad
\text{$\Bigl({}=\frac{2d}{d+4\gamma}\in
\Bigl[\frac{2d}{d+2},2\Bigr]$ if $d\ge3\Bigr)$.}
\end{equation}
\item\label{ii:Lqgammaless}
If we have a continuous embedding $D(A)\subseteq L_p(\Omega,\C^n)$
$(1\le p<\infty)$, then we also have a continuous embedding
$H_{1-\gamma}\subseteq L_{p_\gamma}(\Omega,\C)$ with
\begin{equation}\label{e:pgamma}
p_\gamma\coloneqq\Bigl(\frac{2\gamma}{p_*}+\frac{1-2\gamma}p\Bigr)^{-1}\text.
\end{equation}
\end{enumerate}
\item\label{i:Lqlarg}
Let $\gamma\in[1/2,1]$.
\begin{enumerate}
\item\label{ii:Lqgammalarg}
We have a continuous embedding
$H_{1-\gamma}\subseteq L_{p_\gamma}(\Omega,\C)$ with
\begin{equation}\label{e:pgammas}
p_\gamma\coloneqq\Bigl(\gamma-\frac12+\frac{2-2\gamma}{p_*}\Bigr)^{-1}
\quad\text{$\Bigl({}=\frac{2d}{4\gamma+d-2}\in
\Bigl[\frac{2d}{d+2},2\Bigr]$ if $d\ge3\Bigr)$.}
\end{equation}
\item\label{ii:Lqbrilarg}
If we have a continuous embedding $D(A^*)\subseteq L_p(\Omega,\C^n)$
$(1\le p<\infty)$, then we also have a continuous embedding
$L_{q_\gamma}(\Omega,\C^n)\subseteq\Hcb{\gamma}\cong H_{-\gamma}$ with
\begin{equation}\label{e:qdefs}
q_\gamma\coloneqq\Bigl(1-\frac{2\gamma-1}p-\frac{2-2\gamma}{p_*}\Bigr)^{-1}
\text.
\end{equation}
\end{enumerate}
\end{enumerate}
\end{lemma}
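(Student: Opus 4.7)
My plan is to deduce all four embeddings from the same three-step pattern: identify endpoint $L^p$-embeddings at two of the values $\gamma\in\{0,1/2,1\}$, use the reiteration formula of Proposition~\ref{p:powers} to write the relevant space $H_\beta$ (or $\Hcc{\beta}$) as a complex interpolate of these endpoints, then invoke the classical identity $[L_{p_0}(\Omega,\C^n),L_{p_1}(\Omega,\C^n)]_\theta\cong L_r(\Omega,\C^n)$ with $1/r=(1-\theta)/p_0+\theta/p_1$ together with the functoriality of complex interpolation applied to the identity embedding. The Kato hypothesis enters only through the identification $H_{1/2}\cong V$ (and its counterpart $\Hcc{1/2}\cong V$ for the adjoint scale).

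For part~\iref{ii:Lqbriless}, I would first dualize: since $A$ is Kato and $\gamma\in[0,1/2]$, Proposition~\ref{p:Ascript} (or Proposition~\ref{p:powers}) yields $H_{-\gamma}\cong H_\gamma'$, so it suffices to produce the embedding $H_\gamma\hookrightarrow L_{q_\gamma'}(\Omega,\C^n)$ where $q_\gamma'$ is the H\"{o}lder conjugate. The two endpoints are $H_0=L_2(\Omega,\C^n)$ and $H_{1/2}\cong V\hookrightarrow L_{p_*}(\Omega,\C^n)$. Reiteration gives $H_\gamma\cong[H_0,H_{1/2}]_{2\gamma}$, and $L_p$-interpolation produces $L_r$ with
\[\tfrac{1}{r}=\tfrac{1-2\gamma}{2}+\tfrac{2\gamma}{p_*}=\tfrac12-\gamma+\tfrac{2\gamma}{p_*}=\tfrac{1}{q_\gamma'}\text,\]
matching~\eqref{e:qdef}. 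For part~\iref{ii:Lqgammaless}, $1-\gamma\in[1/2,1]$; the endpoints are $H_{1/2}\cong V\hookrightarrow L_{p_*}$ and $H_1=D(A)\hookrightarrow L_p$, reiteration yields $H_{1-\gamma}\cong[H_{1/2},H_1]_{1-2\gamma}$, and interpolation produces the exponent $1/p_\gamma=2\gamma/p_*+(1-2\gamma)/p$ of~\eqref{e:pgamma}.

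Part~\iref{ii:Lqgammalarg} is symmetric: for $\gamma\in[1/2,1]$, reiterate between $H_0=L_2$ and $H_{1/2}\cong V\hookrightarrow L_{p_*}$ to write $H_{1-\gamma}\cong[H_0,H_{1/2}]_{2(1-\gamma)}$, whence $L_p$-interpolation produces the exponent
\[\tfrac{1-2(1-\gamma)}{2}+\tfrac{2(1-\gamma)}{p_*}=\gamma-\tfrac12+\tfrac{2-2\gamma}{p_*}=\tfrac{1}{p_\gamma}\]
of~\eqref{e:pgammas}. For part~\iref{ii:Lqbrilarg} we must pass to the adjoint scale: dualizing via $H_{-\gamma}\cong\Hcb{\gamma}$ reduces the claim to $\Hcc{\gamma}\hookrightarrow L_{q_\gamma'}$. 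Since $A^*$ is a Kato operator whenever $A$ is (as recorded after Definition~\ref{d:Kato}), we have $\Hcc{1/2}\cong V\hookrightarrow L_{p_*}$, while the hypothesis supplies $\Hcc{1}=D(A^*)\hookrightarrow L_p$. Reiteration for the $A^*$-scale gives $\Hcc{\gamma}\cong[\Hcc{1/2},\Hcc{1}]_{2\gamma-1}$, and $L_p$-interpolation yields
\[\tfrac{1}{q_\gamma'}=\tfrac{2-2\gamma}{p_*}+\tfrac{2\gamma-1}{p}\text,\]
which is exactly the H\"{o}lder conjugate of~\eqref{e:qdefs}.

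The only step that requires real care is the handling of the Kato property across the adjoint scale in part~\iref{ii:Lqbrilarg}: one must invoke that $A^*$ is Kato so as to extend the identification $\Hcc{\gamma}\cong H_\gamma$ up to the endpoint $\gamma=1/2$ needed for interpolation. Apart from this (and the routine but somewhat fiddly bookkeeping of interpolation exponents), the argument is almost formulaic; the deeper content has already been packaged into Proposition~\ref{p:powers} and the Kato-operator theory of Section~\ref{s:Hilbert}.
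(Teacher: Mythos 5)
Your proposal is correct and follows essentially the same route as the paper: identify the endpoint embeddings $H_0=L_2$, $H_{1/2}\cong V\subseteq L_{p_*}$ (and $\Hcc{1}=D(A^*)\subseteq L_p$, $H_1=D(A)\subseteq L_p$), apply the reiteration formula \iref{e:reiterate} together with complex interpolation of $L_p$ spaces, and use the duality $H_{-\gamma}\cong\Hcb{\gamma}\cong H_\gamma'$. The only cosmetic difference is that for parts \iref{ii:Lqbriless} and \iref{ii:Lqbrilarg} you interpolate on the primal scale and dualize afterwards, whereas the paper dualizes the endpoint embeddings first and then interpolates the dual spaces via \iref{e:Hgammabri} and \iref{e:Hinthalf}; the exponent bookkeeping in both versions agrees.
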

\begin{proof}
By hypothesis, we have a continuous dense embedding
$V\subseteq L_{p_*}(\Omega,\C^n)$.
Hence, with $\frac1{p_*'}+\frac1{p_*}=1$ also the (Banach space) adjoint
embedding $L_{p_*'}(\Omega,\C^n)\subseteq V'$ is continous and dense.
In case $\gamma=1/2$ we have $q_\gamma=p_\gamma=p_*'$, and thus
the assertion~\iref{i:Lqlarg} follows.
In case $\gamma=0$ we have $q_\gamma=2$ and $p_\gamma=p$, and the
assertion~\iref{i:Lqless} is trivial. In case $\gamma\in(0,1/2)$,
we use~\cite[Theorem~1.18.4]{Triebel}, the fact that
$[\parameter,\parameter]_\theta$ is an interpolation functor
of order $\theta$ (see e.g.~\cite[Theorem~1.9.3(a)]{Triebel}),
and~\eqref{e:Hgammabri}. Then we have a continuous embedding
\[L_{q_\gamma}(\Omega,\C^n)\cong
[L_{p_*'}(\Omega,\C^n),L_2(\Omega,\C^n)]_{1-2\gamma}
\subseteq[V',H]_{1-2\gamma}\cong H_\gamma'\text,\]
which proves~\iiref{ii:Lqbriless}.

Defining $p'$ by $\frac1{p'}+\frac1p=1$, we find in view of
$\Hcc{1}=D(A^*)\subseteq L_p(\Omega,\C^n)$ that
$L_p'(\Omega,\C^n)=L_p(\Omega,\C^n)'\subseteq\Hcb{1}$.
This shows~\iref{i:Lqlarg} for $\gamma=1$, since $q_\gamma=p'$ and
$p_\gamma=2$. Moreover, for $\gamma\in(1/2,1)$, we find similarly as above
with~\eqref{e:Hinthalf} the continuous embedding
\[L_{q_\gamma}(\Omega,\C^n)=[L_{p'}(\Omega),L_{p_*'}(\Omega,\C^n)]_{2-2\gamma}
\subseteq[D(A^*)',V']_{2-2\gamma}\cong\Hcb{\gamma}\text,\]
which implies~\iiref{ii:Lqbrilarg}.
A similar argument shows with Proposition~\ref{p:powers} that in case
$\gamma\in(0,1/2)$
\[H_{1-\gamma}\cong[H_{1/2},H_1]_{1-2\gamma}\subseteq
[L_{p_*}(\Omega,\C^n),L_p(\Omega,\C^n)]_{1-2\gamma}\cong
L_{p_\gamma}(\Omega,\C^n)\text,\]
proving~\iiref{ii:Lqgammaless},
while in case $\gamma\in(1/2,1)$
\[H_{1-\gamma}\cong[H_0,H_{1/2}]_{2-2\gamma}\subseteq
[L_2(\Omega,\C^n),L_{p_*}(\Omega,\C^n)]_{2-2\gamma}\cong
L_{p_\gamma}(\Omega,\C^n)\text,\]
proving~\iiref{ii:Lqgammalarg} (all embeddings being countinuous).
\end{proof}

We assume also that the nonlinearity $f(t,\parameter)$
is given by a superposition operator induced by a function
$\widetilde f\colon[0,\infty)\times\Omega\times\C^n\to2^{\C^n}$, that is,
for each $t\in[0,\infty)$
\begin{equation}\label{e:superpos}
f(t,u)\coloneqq\Curly{v\colon\Omega\to\C^n\mid\text{$v$ measurable and
$v(x)\in\widetilde f(t,x,u(x))$ for almost all $x\in\Omega$}}\text.
\end{equation}
For the stability result, without loss of generality,
we will consider only the case $u_0=0$ and assume that $\widetilde f$ is
uniformly linearizable at $u=0$ in the following sense.
There are $r\in(1,\infty]$, a measurable
$\widetilde B\colon\Omega\to\C^{n\times n}$, and
a function $\widetilde g\colon(0,\infty)\times\Omega\times\C^n\to2^{\C^n}$ with
\[\widetilde f(t,x,u)=\widetilde B(x)u+\widetilde g(t,x,u)\quad
\text{for all $(t,x,u)\in(0,\infty)\times\Omega\times\C^n$}\]
such that
\begin{equation}\label{e:g0limit}
\lim_{\Abs u\to0}\,
\frac{\sup\bigl\{\Abs v: v\in\widetilde g\bigl(
(0,\infty)\times\Curly x\times\Curly u\bigr)\bigr\}}{\Abs u}=0
\end{equation}
for almost all $x\in\Omega$. Moreover, we assume that there
is $C_0\in(0,\infty)$ such that
\begin{equation}\label{e:g0est}
\sup\bigl\{\Abs v:
v\in\widetilde g\bigl((0,\infty)\times\Curly x\times\Curly u\bigr)\bigr\}\le
C_0\cdot(\Abs u+\Abs u^{\sigma})\quad\text{for all $u\in\C^n$}
\end{equation}
for almost all $x\in\Omega$ and some $\sigma\in(1,\infty)$. We define a
corresponding multiplication operator $B$ by
\begin{equation}\label{e:Bmultiplier}
Bu(x)\coloneqq\widetilde B(x)u(x)\quad\text{for all $x\in\Omega$.}
\end{equation}
With this notation, the following holds.

\begin{proposition}\label{p:diffop}
Let $A$ be a Kato operator and $u_0=0$. Suppose that
\begin{equation}\label{e:mesOmega}
\text{$\Omega$ has finite (Lebesgue) measure}
\end{equation}
and that $r\in[1,\infty]$ and $\sigma\in(0,\infty)$ are such that
$\widetilde B\in L_r(\Omega,\C^{n\times n})$ and~\eqref{e:g0limit}
and~\eqref{e:g0est} hold.
\begin{enumerate}
\item\label{i:L2}
Let $\alpha=0$. Assume
\begin{equation}\label{e:rboring}
\begin{cases}
r=2&\text{if $d=1$},\\
r>2&\text{if $d=2$,}\\
\displaystyle
r=\frac{2p_*}{p_*-2}\quad({}=d)&\text{if $d\ge3$,}
\end{cases}
\end{equation}
and
\begin{equation}\label{e:pL2}
\begin{cases}
\sigma=2&\text{if $d=1$,}\\
\sigma<2&\text{if $d=2$,}\\
\displaystyle
\sigma=2-\frac2{p_*}\quad\Bigl({}=1+\frac2d\Bigr)&\text{if $d\ge3$.}
\end{cases}
\end{equation}
Then for every $\gamma\in[1/2,1)$ there holds
$f\colon[0,\infty)\times H_\alpha\to2^{\Hcb{\gamma}}$, and
the hypothesis~\Hypo{B_\gamma} of Theorems~\ref{t:stability1}
and~\ref{t:stability2} is satisfied with $H_\alpha=L_2(\Omega,\C^n)$.
\item\label{i:L2a}
Let $\alpha=0$. Assume that the embedding
$D(A^*)\subseteq L_p(\Omega,\C^n)$ is continuous for some $p\in(p_*,\infty)$,
and
\begin{equation}\label{e:r2lower}
r>\frac{2p}{p-2}\quad\text{and}\quad\sigma<2-\frac2p\text.
\end{equation}
Then
\begin{equation}\label{e:gamma0L2}
\gamma_0\coloneqq\frac{(\sigma-2)p_*p-2p_*+4p}{4(p-p_*)}<1\text,\quad
\gamma_1\coloneqq\frac{2pp_*-r(pp_*+2p_*-4p)}{4(p-p_*)r}<1\text,
\end{equation}
and for every $\gamma\in[\max\Curly{\gamma_0,\gamma_1,1/2},1)$
the same conclusion as in~\iref{i:L2} is valid.
\item\label{i:W1}
Let $\alpha=1/2$. Suppose that $\widetilde B\in L_r(\Omega,\C^{n\times n})$
with some
\begin{equation}\label{e:rstrict}
r>\frac{p_*}{p_*-2}\quad\text{$\Bigl({}=\frac d2$ if $d\ge3\Bigr)$,}
\end{equation}
and that~\eqref{e:g0limit} and~\eqref{e:g0est} hold with some
\begin{equation}\label{e:pW2}
\sigma<p_*-1\quad\text{$\Bigl({}=\frac{d+2}{d-2}$ if $d\ge3\Bigr)$.}
\end{equation}
Then
\begin{equation}\label{e:gamma0}
\gamma_0\coloneqq\frac{2\sigma-p_*}{2p_*-4}<\frac12\quad\text{and}\quad
\gamma_1\coloneqq\frac{p_*}{r(p_*-2)}-\frac12<\frac12\text,
\end{equation}
and for every $\gamma\in[\max\Curly{0,\gamma_0,\gamma_1},1/2)$ we have
$f\colon[0,\infty)\times H_\alpha\to2^{H_\gamma'}$, and
the hypothesis~\Hypo{B_\gamma} of
Theorems~\ref{t:stability1} and~\ref{t:stability2} is satisfied with
$H_\alpha=V$.
\end{enumerate}
\end{proposition}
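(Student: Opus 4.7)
Since $u_0=0$ is an equilibrium, $A_{-\gamma}u_0=0$, and the function $g$ in~\Hypo{B_\gamma} is precisely the superposition induced by $\widetilde g$ in the sense of~\eqref{e:superpos}. The plan is therefore to reduce~\Hypo{B_\gamma} to two assertions: (a) $B$ extends to a bounded map $H_\alpha\to H_{-\gamma}\cong\Hcb{\gamma}$; and (b) the superposition $u\mapsto\widetilde g(t,\cdot,u(\cdot))$ is of little-oh type at $u=0$ as a map $H_\alpha\to H_{-\gamma}$, uniformly in $t$ and over all measurable selections. Both (a) and (b) will be established by factoring through an intermediate Lebesgue space $L_{q_\gamma}$, combining a Sobolev-type embedding on the left with Lemma~\ref{l:Lq} on the right.

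For (a), we factor $H_\alpha\hookrightarrow L_{p_\alpha}\xrightarrow{\widetilde B\cdot}L_s\hookrightarrow H_{-\gamma}$, where $p_\alpha=2$ in parts~\iref{i:L2} and~\iref{i:L2a} and $p_\alpha=p_*$ in part~\iref{i:W1} by the assumed Sobolev embedding; the middle arrow is H\"{o}lder multiplication with $1/s=1/r+1/p_\alpha$; and the last inclusion comes from Lemma~\ref{l:Lq} provided $s\ge q_\gamma$. In part~\iref{i:L2}, the sharpest case is $\gamma=1/2$ (where $q_{1/2}=p_*'$), giving exactly~\eqref{e:rboring}, and all $\gamma\in(1/2,1)$ are then free via the monotone embedding $H_{-1/2}\subseteq H_{-\gamma}$. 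In part~\iref{i:L2a} we would use Lemma~\ref{l:Lq}\iiref{ii:Lqbrilarg}, which exploits the additional hypothesis $D(A^*)\subseteq L_p$; inverting $1/r+1/2\le 1/q_\gamma$ with $q_\gamma$ from~\eqref{e:qdefs} produces the threshold $\gamma_1$ in~\eqref{e:gamma0L2}. In part~\iref{i:W1}, the analogous step with Lemma~\ref{l:Lq}\iiref{ii:Lqbriless} and $1/r+1/p_*\le 1/q_\gamma$ produces $\gamma_1$ in~\eqref{e:gamma0}.

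For (b), the same chain of embeddings reduces the task to a little-oh estimate $L_{p_\alpha}\to L_{q_\gamma}$ at $u=0$, uniform in $t$ and in the selection. We introduce the pointwise envelope
\[\psi_r(x)\coloneqq\sup\Bigl\{\Abs v/\Abs s:v\in\widetilde g\bigl((0,\infty)\times\Curly x\times\Curly s\bigr),\;0<\Abs s\le r\Bigr\}\text.\]
By~\eqref{e:g0limit}, $\psi_r(x)\to0$ a.e.\ as $r\to0^+$; by~\eqref{e:g0est}, $\psi_r\le C_0(1+r^{\sigma-1})$, which together with~\eqref{e:mesOmega} places $\psi_r$ in every $L_q(\Omega)$, $q<\infty$. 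Splitting $\Omega=\Curly{\Abs u\le r}\cup\Curly{\Abs u>r}$: on the first set $\Abs v\le\psi_r\Abs u$ and H\"{o}lder with conjugate exponents $p_\alpha/q_\gamma$ and $p_\alpha/(p_\alpha-q_\gamma)$ bound the contribution by $\Norm{\psi_r}_{L_{q_\gamma p_\alpha/(p_\alpha-q_\gamma)}}\Norm u_{L_{p_\alpha}}$, whose prefactor tends to zero by dominated convergence; on the second set $\Abs v\le C_0\Abs u^\sigma$ contributes at most $C_0\Norm u_{L_{\sigma q_\gamma}}^\sigma$, which is $o(\Norm u_{L_{p_\alpha}})$ provided $\sigma q_\gamma\le p_\alpha$. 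Solving this last inequality recovers~\eqref{e:pL2} and the thresholds $\gamma_0$ in~\eqref{e:gamma0L2} and~\eqref{e:gamma0}. The hard part will be the uniformity in $t$ and in the multivalued selection, which forces us to pass to the supremum envelope $\psi_r$; this passage is legitimate precisely because~\eqref{e:g0limit} and~\eqref{e:g0est} are themselves formulated as suprema over $t$ and over $\widetilde g$. The strict inequalities $\gamma_0,\gamma_1<1$ in~\eqref{e:gamma0L2} and $\gamma_0,\gamma_1<1/2$ in~\eqref{e:gamma0} then follow by direct substitution of~\eqref{e:r2lower}, \eqref{e:rstrict}, and~\eqref{e:pW2} into the defining formulas.
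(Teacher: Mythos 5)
Your argument has the same skeleton as the paper's proof: both factor the linear part as $H_\alpha\hookrightarrow L_{\widetilde p}\xrightarrow{\widetilde B\cdot}L_{q_\gamma}\hookrightarrow\Hcb{\gamma}$ with the generalized H\"{o}lder inequality in the middle and Lemma~\ref{l:Lq} on the right, both reduce part~\iref{i:L2} to the endpoint $\gamma=1/2$, and both obtain $\gamma_0,\gamma_1$ from exactly the two exponent inequalities $\frac1{q_\gamma}\ge\frac1{\widetilde p}+\frac1r$ and $\sigma\le\widetilde p/q_\gamma$ (I checked that your arithmetic reproduces~\eqref{e:gamma0L2} and~\eqref{e:gamma0}). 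The one place you genuinely diverge is the little-oh property of the superposition operator of $\widetilde g$ from $L_{\widetilde p}$ into $L_{q_\gamma}$: the paper disposes of this by citing \cite[Theorem~4.14]{VaethSupAtomI}, whereas you prove it directly by splitting $\Omega$ along $\Curly{\Abs u\le r}$ and using the envelope $\psi_r$ together with dominated convergence. That self-contained route is attractive and in substance correct, but it hides the technical point that the cited theorem exists to handle: $\psi_r$ is a supremum over an uncountable family (all $s$ with $0<\Abs s\le r$ and all values $v$ of the multivalued $\widetilde g$), so its measurability --- which you need both for the H\"{o}lder step and for dominated convergence --- is not automatic from~\eqref{e:g0limit} and~\eqref{e:g0est}, which are only pointwise a.e.\ statements for each fixed $u$; one must either impose a Carath\'{e}odory-type structure, pass to a measurable majorant, or invoke the superposition-operator machinery as the paper does. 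A second, smaller omission: in part~\iref{i:L2} your chain needs a separate word for $d\le2$ (the paper takes $p_*$ large for $d=2$ and puts $q_\gamma=1$ for $d=1$, using $H_{1/2}\subseteq L_\infty(\Omega,\C^n)$ so that $L_1(\Omega,\C^n)\subseteq\Hcb{\gamma}$); your H\"{o}lder bookkeeping silently assumes $q_\gamma>1$ and $q_\gamma<\widetilde p$, which degenerates at $d=1$ where $r=2$ forces $s=1$. Neither issue changes the conclusion, but both need to be addressed for a complete proof.
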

\begin{proof}
In case~\iref{i:L2}, it is no loss of generality to assume $\gamma=1/2$,
and we assume first $d\ge3$. In cases~\iref{i:L2} and~\iref{i:L2a},
we put $\widetilde p=2$ and define $q_\gamma$ by~\eqref{e:qdefs},
while in case~\iref{i:W1}, we put $\widetilde p=p_*$ and
define $q_\gamma$ by~\eqref{e:qdef}. Then we put
$U\coloneqq L_{\widetilde p}(\Omega,\C^n)$ and
$V_\gamma\coloneqq L_{q_\gamma}(\Omega,\C^n)$.
Letting $r$ satisfy~\eqref{e:rboring},~\eqref{e:r2lower},
or~\eqref{e:rstrict}, and requiring $\gamma\ge\gamma_1$ with $\gamma_1$ as
in~\eqref{e:gamma0L2} or~\eqref{e:gamma0} in the respective cases, we find
\[\frac1{q_\gamma}\ge\frac1{\widetilde p}+\frac1r\text,\]
and so we obtain from the (generalized) H\"{o}lder inequality that
$B\colon U\to V_\gamma$ is bounded. Since we have a bounded embedding
$H_\alpha\subseteq U$, we obtain from Lemma~\ref{l:Lq} that
$B\colon H_\alpha\to\Hcb{\gamma}$ is bounded.

Moreover, letting $\sigma$ satisfy~\eqref{e:pL2},~\eqref{e:r2lower},
or~\eqref{e:pW2}, and requiring $\gamma\ge\gamma_0$ with $\gamma_0$ as
in~\eqref{e:gamma0L2} or~\eqref{e:gamma0} in the respective cases, we find
$\sigma\le\widetilde p/q_\gamma$.
Hence, the superposition operator $g$ generated by $\widetilde g$
satisfies $g\colon[0,\infty)\times U\to2^{V_\gamma}$ and
\[\lim_{\Norm u_U\to0}\,\frac{\sup\bigl\{\Norm v_{V_\gamma}:
v\in g\bigl((0,\infty)\times\Curly u\bigr)\bigr\}}{\Norm u_U}=0\text,\]
see~\cite[Theorem~4.14]{VaethSupAtomI}. Since we have continuous embeddings
$H_\alpha\subseteq U$ and $V_\gamma\subseteq\Hcb{\gamma}$ (Lemma~\ref{l:Lq}),
the condition~\Hypo{B_\gamma} is proved.

Case~\iref{i:L2} with $d=2$ is treated in a similar way (with a sufficiently
large $p_*$), and for $d=1$ we can put $q_\gamma=1$ in the above calculation,
since in this case we have still a continuous embedding
$V_\gamma\subseteq\Hcb{\gamma}$ by the continuity of the embedding
$\Hcb{\gamma}\subseteq H_{1/2}\subseteq L_\infty(\Omega,\C^n)$.
\end{proof}

\begin{remark}\label{r:Cembed}
The last observation in the proof extends to a more general situation:
If $\gamma\in[0,1/2]$ is such that the embedding
$H_\gamma\subseteq L_\infty(\Omega,\C^n)$ is continuous,
then the conclusion of Proposition~\ref{p:diffop}\iref{i:L2}
is valid with $r=\sigma=2$ (we put $q_\gamma=1$ in the proof).
\end{remark}

\begin{remark}
For $d\ge3$ assertion~\iref{i:L2a} of Proposition~\ref{p:diffop} requires
strictly less about $r$ and $\sigma$ than assertion~\iref{i:L2}, because
in view of $p>p_*$ there holds
\[\frac{2p}{p-2}<\frac{2p_*}{p_*-2}\quad\text{and}\quad
2-\frac2p>2-\frac2{p^*}\text.\]
\end{remark}

\begin{remark}
In case $d\ge3$ the quantities $\gamma_0$ and $\gamma_1$ in~\eqref{e:gamma0}
have the form
\begin{equation}\label{e:gamma1}
\gamma_0=\frac{(d-2)\sigma-d}{4}\quad\text{and}\quad
\gamma_1=\frac12\Bigl(\frac dr-1\Bigr)\text.
\end{equation}
\end{remark}

\begin{remark}
Proposition~\ref{p:diffop}\iref{i:W1} holds also with $\gamma=1/2$.
Moreover, for $\gamma=1/2$ one does not have to require that
the inequalities in~\eqref{e:rstrict} or~\eqref{e:pW2} are strict. However,
the choice $\gamma=1/2$ violates the hypothesis~\eqref{e:gammaalpha} of
Theorems~\ref{t:stability1} and~\ref{t:stability2} if $\alpha=1/2$.
\end{remark}

\begin{remark}
Hypothesis~\eqref{e:mesOmega} is obviously needed for the
assertion~\iref{i:W1} of Proposition~\ref{p:diffop}. However, we used this
hypothesis also for the assertion~\iref{i:L2} when we
applied~\cite[Theorem~4.14]{VaethSupAtomI}.
If hypothesis~\eqref{e:mesOmega} fails, one can apply other criteria
for the differentiability of superposition operators like
e.g.~\cite[Theorem~4.9]{VaethSupAtomI}, but we do not formulate
corresponding results here.
\end{remark}

While Proposition~\ref{p:diffop} gives a sufficient condition for
the hypothesis~\Hypo{B_\gamma}, this is not sufficient to apply
Theorem~\ref{t:stability2}. For the latter, one also has to estimate
all $\gamma$-weak eigenvalues of $A-B$, and the latter in turn is
usually simpler if one knows that all $\gamma$-weak eigenvalues of $A-B$
are eigenvalues of $A-B$. For the operator $B$ from~\eqref{e:Bmultiplier},
this is the content of the following result.

\begin{proposition}\label{p:weakev}
Suppose~\eqref{e:mesOmega}.
Let $B$ have the form~\eqref{e:Bmultiplier} with some
$\widetilde B\in L_r(\Omega,\C^n)$, $r\in[1,\infty]$.
\begin{enumerate}
\item\label{i:rgood}
If $r$ satisfies~\eqref{e:rboring}, then $B|_V\colon V\to H$ is bounded.
\item\label{i:rsemi}
If $A$ is a Kato operator, $\gamma\in[1/2,1)$, and
\begin{equation}\label{e:rsemi}
\gamma\le\widetilde\gamma_0\coloneqq
\begin{cases}\displaystyle1-\frac{p_*}{(p_*-2)r}&\text{if $r<\infty$,}\\
1&\text{if $r=\infty$,}\end{cases}
\end{equation}
then $B|_{H_{1-\gamma}}\colon H_{1-\gamma}\to H$.
\item\label{i:rbad}
If $A$ is a Kato operator,
\begin{equation}\label{e:rinterest}
2<r<\frac{2p_*}{p_*-2}\quad\text{$\bigl({}=d$ if $d\ge3\bigr)$,}
\end{equation}
and if there is $p\ge\frac{2r}{r-2}$ $({}>p_*)$ with
$D(A)\subseteq L_p(\Omega,\C^n)$, then
\begin{equation}\label{e:rgamma}
\widetilde\gamma_p\coloneqq\frac12\Bigl(\frac12-\frac1r-\frac1p\Bigr)\cdot
\Bigl(\frac1{p_*}-\frac1p\Bigr)^{-1}\in[0,1/2)\text,
\end{equation}
and for all $\gamma\le\widetilde\gamma_p$ the operator
$B|_{H_{1-\gamma}}\colon H_{1-\gamma}\to H$ is bounded.
\end{enumerate}
If $A$ is a Kato operator, the hypotheses of
either~\iref{i:rgood},~\iref{i:rsemi}, or~\iref{i:rbad}
are satisfied and $\gamma\le1/2$, $\gamma\le\widetilde\gamma_0$,
or $\gamma\le\widetilde\gamma_p$, respectively,
then $\lambda$ is a $\gamma$-weak eigenvalue of $A-B$
if and only if $\lambda$ is an eigenvalue of $A-B$.
\end{proposition}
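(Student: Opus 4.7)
The plan is to reduce all three boundedness assertions to an application of H\"older's inequality, using the embeddings of Lemma~\ref{l:Lq} to place $H_{1-\gamma}$ (or $V$) inside a suitable Lebesgue space $L_{p_\gamma}$. Since $B$ acts by pointwise multiplication with $\widetilde B\in L_r$, the map $B\colon L_{p_\gamma}\to L_2=H$ is bounded precisely when
\[\frac1{p_\gamma}+\frac1r\le\frac12\text,\]
so the whole argument comes down to verifying that the thresholds $\widetilde\gamma_0$, $\widetilde\gamma_p$ (and the exponent in~\eqref{e:rboring}) are exactly what this H\"older relation demands.

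For~\iref{i:rgood}, one applies the Sobolev embedding $V\subseteq L_{p_*}(\Omega,\C^n)$ directly; the requirement $1/p_*+1/r\le1/2$ is equivalent to $r\ge2p_*/(p_*-2)$, matching~\eqref{e:rboring} in the case $d\ge3$ (and the cases $d\le2$ are handled using $p_*$ sufficiently large together with~\eqref{e:mesOmega} to absorb lower $L_q$-norms). For~\iref{i:rsemi}, with $A$ Kato and $\gamma\in[1/2,1)$, Lemma~\ref{l:Lq}\iref{i:Lqlarg}\iiref{ii:Lqgammalarg} gives $H_{1-\gamma}\subseteq L_{p_\gamma}$ with $p_\gamma$ as in~\eqref{e:pgammas}; the H\"older condition simplifies, after one short calculation, to
\[\gamma\Bigl(1-\frac2{p_*}\Bigr)\le 1-\frac2{p_*}-\frac1r\text,\]
which is exactly $\gamma\le\widetilde\gamma_0$. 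For~\iref{i:rbad}, with $\gamma\in[0,1/2)$, one instead uses Lemma~\ref{l:Lq}\iref{i:Lqless}\iiref{ii:Lqgammaless}, which is precisely the place where the additional hypothesis $D(A)\subseteq L_p$ enters, and obtains $H_{1-\gamma}\subseteq L_{p_\gamma}$ with $p_\gamma$ as in~\eqref{e:pgamma}. The H\"older bound rearranges to
\[2\gamma\Bigl(\frac1{p_*}-\frac1p\Bigr)\le\frac12-\frac1p-\frac1r\text,\]
that is, $\gamma\le\widetilde\gamma_p$ from~\eqref{e:rgamma}. The condition $p\ge 2r/(r-2)$ is exactly what makes the right-hand side nonnegative (so $\widetilde\gamma_p\ge0$), while~\eqref{e:rinterest} forces $\widetilde\gamma_p<1/2$.

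In each case one has thereby established $B(H_{1-\gamma})\subseteq H$, which is~\eqref{e:BHone}. The concluding statement identifying $\gamma$-weak eigenvalues with ordinary eigenvalues of $A-B$ is then an immediate appeal to Proposition~\ref{p:BVH}.

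The only genuine obstacle is bookkeeping: one must patiently unwind the inequality $1/p_\gamma+1/r\le1/2$ with the various interpolation exponents to recover the closed-form thresholds $\widetilde\gamma_0$ and $\widetilde\gamma_p$, and verify that under~\eqref{e:rinterest} together with $p\ge 2r/(r-2)$ the number $\widetilde\gamma_p$ really lies in $[0,1/2)$ (so that the assertion is nontrivial). Everything else is a direct combination of Lemma~\ref{l:Lq}, H\"older, and Proposition~\ref{p:BVH}.
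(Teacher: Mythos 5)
Your proposal is correct and follows essentially the same route as the paper: Hölder's inequality for the multiplication operator combined with the embeddings $H_{1-\gamma}\subseteq L_{p_\gamma}$ from Lemma~\ref{l:Lq} (with $p_\gamma$ from \eqref{e:pgammas} resp.\ \eqref{e:pgamma}), and the algebraic verification that $1/p_\gamma+1/r\le1/2$ is exactly $\gamma\le\widetilde\gamma_0$ resp.\ $\gamma\le\widetilde\gamma_p$. The only cosmetic difference is that for the final eigenvalue assertion in case~\iref{i:rgood} the paper routes $B(V)\subseteq H$ through Remark~\ref{r:BVH} (i.e.\ $H_{1-\gamma}\subseteq H_{1/2}\cong V$) before invoking Proposition~\ref{p:BVH}, which is implicit in your argument.
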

\begin{proof}
In case~\iref{i:rgood} with $d\ge 3$, we apply in view of
\[\frac12=\frac1{p_*}+\frac1r\]
the (generalized) H\"{o}lder inequality to obtain that
$B\colon L_{p_*}(\Omega,\C^n)\to L_2(\Omega,\C^n)$ is bounded and thus
$B\colon V\to H$ is bounded. Case~\iref{i:rgood} with $d\ge 2$ is
similar (with sufficiently large $p_*$), and for $d=1$ one can formally
put $p_*=\infty$ by the continuity of the embedding
$H_{1/2}\subset C(\overline\Omega,\C^n)$.

In case~\iref{i:rsemi} and~\iref{i:rbad}, we define $p_\gamma$
by~\eqref{e:pgammas} or~\eqref{e:pgamma}, respectively,
and observe that, due to~\eqref{e:rsemi} or~\eqref{e:rgamma}, respectively,
we have the estimate
\[\frac12\ge\frac1{p_\gamma}+\frac1r\text.\]
Hence, by the (generalized) H\"{o}lder inequality,
$B\colon L_{p_\gamma}(\Omega,\C^n)\to L_2(\Omega,\C^n)$ is bounded,
and thus also $B\colon H_{1-\gamma}\to H$ is bounded by Lemma~\ref{l:Lq}.
The last assertion follows from Proposition~\ref{p:BVH} and
Remark~\ref{r:BVH}.
\end{proof}

If one is interested in stability in $H$ (the case $\alpha=0$),
one should consider Proposition~\ref{p:diffop} part~\iref{i:L2}
or~\iref{i:L2a}. In the former case, Proposition~\ref{p:weakev}\iref{i:rgood}
is automatically satisfied, and in the latter case one would like to
apply Proposition~\ref{p:weakev}\iref{i:rsemi}. In the latter case,
$\gamma\in[1/2,1)$ has to satisfy
$\gamma_i\le\gamma\le\widetilde\gamma_0$ for $i=0,1$
with $\gamma_i$ from~\eqref{e:gamma0L2}. Obviously, $\gamma_1$ and
$\widetilde\gamma_0$ depend monotonically on $r$, and
$\gamma_1<\widetilde\gamma_0$ if $r$ is sufficiently large, and then
$\gamma_0<\widetilde\gamma_0$ if $\sigma$ is sufficiently small,
so that Proposition~\ref{p:diffop}\iref{i:L2a} and
Proposition~\ref{p:weakev}\iref{i:rsemi} apply simultaneously
for all $\gamma$ from some proper interval (if $r$ is sufficiently large).

If one is interested in stability in $V$ (the case $\alpha=1/2$),
one should consider Proposition~\ref{p:diffop}\iref{i:W1}.
In this case, the hypothesis of Proposition~\ref{p:weakev}\iref{i:rgood}
means an additional requirement for $r$. The purpose of
Proposition~\ref{p:weakev}\iref{i:rbad} is to relax this requirement.
However, it is not immediately clear whether this relaxed requirement
applies in the situation of  Proposition~\ref{p:diffop}\iref{i:W1},
since then $\gamma\in[0,1/2]$ needs to satisfy
$\gamma_i\le\gamma\le\widetilde\gamma_p$ for $i=0,1$
with $\gamma_i$ from~\eqref{e:gamma0}. Although $\gamma_1$ and
$\widetilde\gamma_p$ depend monotonically on $r$ and satisfy
$\gamma_1<\widetilde\gamma_p$ if $r$ is sufficiently large,
one cannot choose $r$ arbitrarily large in view of~\eqref{e:rinterest}:
Otherwise already the additional requirement of
Proposition~\ref{p:weakev}\iref{i:rgood} is satisfied.
In fact, the following observation may be somewhat discouraging at
a first glance.

\begin{remark}
If~\eqref{e:rinterest} holds, then the term $\widetilde\gamma_p$
in~\eqref{e:rgamma} is strictly increasing with respect to
$p\ge\frac{2r}{r-2}$. In particular,
\[\widetilde\gamma_\infty\coloneqq
\sup_{p\in[\frac{2r}{r-2},\infty)}\,\widetilde\gamma_p=
\lim_{p\to\infty}\,\widetilde\gamma_p=\frac{r-2}{4r}p_*\text.\]
Thus, even if we know that $D(A)\subseteq L_p(\Omega,\C^n)$ for
every $p\in(1,\infty)$, we still have
$\gamma<\widetilde\gamma_\infty$, and the latter can be
arbitrarily small if $r$ is sufficiently close to~$2$.
\end{remark}

Nevertheless we will show in the following remark that
Proposition~\ref{p:diffop}\iref{i:W1} and
Proposition~\ref{p:weakev}\iref{i:rbad} apply simultaneously with
the same $\gamma$ provided that $r$ is not ``too'' small and
$\sigma$ is not ``too'' large.

\begin{remark}
Suppose that Sobolev's embedding theorem holds in the sense
described earlier and, moreover, that we have a continuous
embedding $D(A)\subseteq L_p(\Omega,\C^n)$ with $p=\frac{2d}{d-4}$
in case $d\ge5$ and any $p\in(p_*,\infty)$ in case $d\le4$. For
instance, by standard Sobolev embedding theorems
(see~\cite[Theorem~1.4.5]{MaziaSob}), this is the case if
$D(A)\subseteq W^{2,2}(\Omega,\C^n)$.
Proposition~\ref{p:weakev}\iref{i:rbad} applies with
\[\begin{cases}
\text{$r\in[\frac d2,d]$,
$\gamma\le\widetilde\gamma_{2d/(d-4)}=1-\frac d{2r}$}&\text{if $d\ge5$,}\\
\text{$r\in(2,d)$, $\gamma<\widetilde\gamma_\infty=\frac{r-2}{4r}p_*$}&
\text{if $d=3,4$.}
\end{cases}\]
In view of~\eqref{e:gamma1} it follows that if
\[\begin{cases}
\text{$r\in[\frac23d,d)$ and $\sigma\le\frac{(d+4)r-2d}{(d-2)r}$}&
\text{if $d\ge5$,}\\
\text{$r\in(\frac{d^2}{2d-2},d)$ and $\sigma<\frac{d^2r-4d}{(d-2)^2r}$}&
\text{if $d=3,4$,}
\end{cases}\]
then Proposition~\ref{p:diffop}\iref{i:W1} applies with
\[\begin{cases}\max\Curly{\gamma_0,\gamma_1}\le\widetilde\gamma_{2d/(d-4)}&
\text{if $d\ge5$,}\\
\max\Curly{\gamma_0,\gamma_1}<\widetilde\gamma_\infty&
\text{if $d=3,4$.}
\end{cases}\]
Hence, in these cases there exists $\gamma\in[0,1/2)$ for which
Proposition~\ref{p:diffop}\iref{i:W1} and
Proposition~\ref{p:weakev}\iref{i:rbad} apply simultaneously.
\end{remark}

A result similar to Proposition~\ref{p:diffop} holds for a Lipschitz condition.
We assume that $\widetilde f\colon[0,\infty)\times\Omega\times\C^n\to\C^n$
is single-valued. Let $\widetilde p\ge1$, $\sigma>0$, and $\gamma\in[0,1/2]$.
We define $q_\gamma$ by~\eqref{e:qdef}.
We assume that for each $t_0\in[0,\infty)$ there are
$L_{t_0}\ge0$, $\sigma_{t_0}>0$, and a neighborhood
$I\subseteq[0,\infty)$ of $t_0$
such that for each $t\in I$ there are measurable
$a_t,b_t\colon\Omega\to[0,\infty)$ with
\[\text{$\int_\Omega a_t(x)^{\widetilde p}\dx\le1$ and
$\int_\Omega b_t(x)^{q_\gamma}\dx\le1$}\]
such that for almost all $x\in\Omega$ the uniform
(for all $u,v\in\C^n$) estimate
\begin{equation}\label{e:tildefL}
\Abs{\widetilde f(t,x,u)-\widetilde f(t,x,v)}\le
L_{t_0}\cdot\bigl(a_t(x)+\Abs u+\Abs v\bigr)^{\sigma-1}\Abs{u-v}
\end{equation}
holds and such that for each $t,s\in I$ we have
for almost all $x\in\Omega$ the uniform (for all $u\in\C^n$) estimate
\begin{equation}\label{e:tildefH}
\Abs{\widetilde f(t,x,u)-\widetilde f(s,x,u)}\le
L_{t_0}\bigl(b_t(x)+b_s(x)+\Abs u^\sigma\bigr)\Abs{t-s}^{\sigma_{t_0}}\text.
\end{equation}
Finally, we assume that
\begin{equation}\label{e:f0good}
\text{$\widetilde f(t,\parameter,u)$ is measurable for all
$(t,u)\in[0,\infty)\times\C^n$, and
$\widetilde f(0,\parameter,0)\in L_{q_\gamma}(\Omega,\C^n)$.}
\end{equation}

\begin{proposition}\label{p:Lipop}
Let $A$ be a Kato operator, and assume~\eqref{e:mesOmega}.
Assume one of the following:
\begin{enumerate}
\item\label{i:LL2}
Let $\alpha=0$ and $\gamma\in[1/2,1)$. Suppose that
conditions~\eqref{e:tildefL},~\eqref{e:tildefH}, and~\eqref{e:f0good}
hold with $\widetilde p=2$ and with $\sigma$ from~\eqref{e:pL2}.
\item
Let $\alpha=0$, and assume that the embedding
$D(A^*)\subseteq L_p(\Omega,\C^n)$ is continuous for some $p\in(p_*,\infty)$,
Let $\sigma$ satisfy~\eqref{e:r2lower}, and thus $\gamma_0$
from~\eqref{e:gamma0L2} satisfies $\gamma_0<1$. Let
$\gamma\in[\max\Curly{\gamma_0,0},1)$,
and suppose that conditions~\eqref{e:tildefL},~\eqref{e:tildefH},
and~\eqref{e:f0good} hold with $\widetilde p=2$.
\item\label{i:LW1}
Let $\alpha=1/2$. Let $\sigma$
satisfy~\eqref{e:pW2}, and thus $\gamma_0$ from~\eqref{e:gamma0}
satisfies $\gamma_0<1/2$. Let $\gamma\in[\max\Curly{\gamma_0,0},1/2)$,
and suppose that conditions~\eqref{e:tildefL},~\eqref{e:tildefH},
and~\eqref{e:f0good} hold with $\widetilde p=p_*$.
\end{enumerate}
Then $f$ maps $[0,\infty)\times H_\alpha$ into $H_{-\gamma}$ and satisfies a
right local H\"{o}lder-Lipschitz condition~\eqref{e:goodLip}
and is left-locally bounded into $H_{-\gamma}$.
\end{proposition}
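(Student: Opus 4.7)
The strategy is to mirror the proof of Proposition~\ref{p:diffop}: in each of the three cases I show that $f$ maps $[0,\infty)\times H_\alpha$ continuously into the intermediate Lebesgue space $L_{q_\gamma}(\Omega,\C^n)$, which by Lemma~\ref{l:Lq}\iref{ii:Lqbriless} (for $\gamma\le1/2$) or~\iref{ii:Lqbrilarg} (for $\gamma\ge1/2$) embeds continuously into $H_{-\gamma}$. Simultaneously, $H_\alpha$ embeds continuously into $L_{\widetilde p}(\Omega,\C^n)$, where $\widetilde p=2$ in the two $L_2$ cases and $\widetilde p=p_*$ in the $W^{1,2}$ case (by Sobolev's embedding together with $V\cong H_{1/2}$ from Proposition~\ref{p:powers}). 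A short exponent check shows that in each case the standing assumption on $\sigma$ combined with the lower bound $\gamma\ge\gamma_0$ is precisely equivalent to $\sigma\le\widetilde p/q_\gamma$; this is the inequality required to apply the generalized H\"{o}lder inequality in the arguments below.

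For the Lipschitz condition in $u$, I fix a neighborhood $U_0$ of $(t_0,u_0)$ on which~\eqref{e:tildefL} and~\eqref{e:tildefH} hold with a common constant $L_{t_0}$. Applying~\eqref{e:tildefL} pointwise and then the generalized H\"{o}lder inequality (using $\sigma\le\widetilde p/q_\gamma$ and, if the inequality is strict, the finite measure of $\Omega$ from~\eqref{e:mesOmega} to pass through an auxiliary exponent $q'\ge q_\gamma$), I obtain
\begin{equation*}
\Norm{f(t,u)-f(t,v)}_{L_{q_\gamma}(\Omega,\C^n)}\le
CL_{t_0}\bigl(\Norm{a_t}_{L_{\widetilde p}}+\Norm u_{L_{\widetilde p}}+\Norm v_{L_{\widetilde p}}\bigr)^{\sigma-1}\Norm{u-v}_{L_{\widetilde p}(\Omega,\C^n)}\text.
\end{equation*}
Since $\Norm{a_t}_{L_{\widetilde p}}\le1$ by assumption and the embedding $H_\alpha\subseteq L_{\widetilde p}$ is continuous, the bracket is bounded on any bounded subset of $H_\alpha$, so combined with $L_{q_\gamma}\subseteq H_{-\gamma}$ this yields the spatial part of~\eqref{e:goodLip}.

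For the time-H\"{o}lder estimate, I use~\eqref{e:tildefH} and $\Norm{b_t}_{L_{q_\gamma}},\Norm{b_s}_{L_{q_\gamma}}\le1$ to get
\begin{equation*}
\Norm{f(t,u)-f(s,u)}_{L_{q_\gamma}(\Omega,\C^n)}\le
L_{t_0}\bigl(2+\Norm{\Abs u^\sigma}_{L_{q_\gamma}}\bigr)\Abs{t-s}^{\sigma_{t_0}}\text,
\end{equation*}
and then estimate $\Norm{\Abs u^\sigma}_{L_{q_\gamma}}=\Norm u_{L_{\sigma q_\gamma}}^\sigma\le C\Norm u_{L_{\widetilde p}}^\sigma\le C'\Norm u_{H_\alpha}^\sigma$, where the first inequality again uses $\sigma q_\gamma\le\widetilde p$ together with~\eqref{e:mesOmega}. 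Combined with the previous display this yields~\eqref{e:goodLip} in full.

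To verify the mapping property $f\colon[0,\infty)\times H_\alpha\to H_{-\gamma}$ and the left-local boundedness into $H_{-\gamma}$, I apply the Lipschitz estimate with $v=0$ and the time-H\"{o}lder estimate with $u=0$, together with $\widetilde f(0,\parameter,0)\in L_{q_\gamma}(\Omega,\C^n)$ from~\eqref{e:f0good}. This bounds $\Norm{f(t,u)}_{L_{q_\gamma}}$ uniformly for $t$ in a compact interval and $u$ in a bounded subset of $H_\alpha$, and embedding into $H_{-\gamma}$ finishes the argument. The main technical obstacle is the exponent arithmetic for each of the three cases, but these are precisely the computations underlying~\eqref{e:gamma0L2} and~\eqref{e:gamma0} that were essentially already carried out in the proof of Proposition~\ref{p:diffop}.
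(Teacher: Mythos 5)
Your proof is correct and follows essentially the same route as the paper's: factor $f$ through $L_{\widetilde p}(\Omega,\C^n)\to L_{q_\gamma}(\Omega,\C^n)$, use~\eqref{e:tildefL} with the generalized H\"{o}lder inequality and $\sigma\le\widetilde p/q_\gamma$ for the spatial Lipschitz part, \eqref{e:tildefH} for the temporal H\"{o}lder part, \eqref{e:f0good} for the base point, and then the embeddings $H_\alpha\subseteq L_{\widetilde p}$ and $L_{q_\gamma}\subseteq H_{-\gamma}$ from Lemma~\ref{l:Lq}. The only difference is that you carry out explicitly the H\"{o}lder-inequality estimate that the paper delegates to the appendix of the cited reference on superposition operators.
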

\begin{proof}
We use the notation of the proof of Proposition~\ref{p:diffop}.
Note that~\eqref{e:f0good} implies in view of~\eqref{e:tildefH} by a
straightforward estimate that $f(t,0)\in V_\gamma$ for every $t>0$.
From~\cite[Appendix]{VaethQuittner} we obtain together with~\eqref{e:tildefL}
that for each $t\in I$ the function $f(t,\parameter)$ maps $U$ into $V_\gamma$
and satisfies a Lipschitz condition on every bounded set $M\subseteq U$
with Lipschitz constant being independent of $t\in I$. Using~\eqref{e:tildefH},
we find by a straighforward estimate that
\[\Norm{f(t,u)-f(s,u)}_{V_\gamma}\le C_{M,t_0}
\Abs{t-s}^{\sigma_{t_0}}\quad\text{for all $t,s\in I$, $u\in M$,}\]
where $C_{M,t_0}$ is independent of $t,s\in I$ and $u\in M$.
Combining both assertions and the triangle inequality, we obtain that
$f\colon[0,\infty)\times U\to V_\gamma$ satisfies a right H\"{o}lder-Lipschitz
condition and is left-locally bounded into $V_\gamma$.
Since we have bounded embeddings $H_\alpha\subseteq U$ and
$V_\gamma\subseteq\Hcb{\gamma}\cong H_{-\gamma}$ by~\eqref{e:dual},
the assertion follows.
\end{proof}

\begin{remark}\label{r:CembedLip}
If $\alpha=0$ and $\gamma\in[0,1)$ is such that the embedding
$H_\gamma\subseteq L_\infty(\Omega,\C^n)$ is continuous,
then the conclusion of Proposition~\ref{p:Lipop} is also valid
(with the same proof and $q_\gamma=1$, cf.\ Remark~\ref{r:Cembed}).
\end{remark}

\subsection{Semilinear Parabolic PDEs}

Let $\Omega\subseteq\R^d$ be a bounded domain with Lipschitz boundary
$\partial\Omega$. Let $\Gamma_D,\Gamma_N\subseteq\partial\Omega$ be disjoint
and measurable (with respect to the $(d-1)$-dimensional Hausdorff measure)
and such that
\begin{equation}\label{e:union}
\text{$(\partial\Omega)\setminus(\Gamma_D\cup\Gamma_N)$ is a null set.}
\end{equation}
It is explicitly admissible that $\Gamma_D=\emptyset$ or $\Gamma_N=\emptyset$.
Given $a_{j,k},b_j\in L_\infty(\Omega,\C^{n\times n})$ $(j,k=1,\dotsc,d)$ and
$\widetilde f\colon[0,\infty)\times\Omega\times\C^n\to2^{\C^n}$,
we consider the semilinear PDE
\begin{equation}\label{e:dgl}
\frac{\partial u}{\partial t}+Pu=\widetilde f_0(t,x,u)\quad
\text{on $\Omega$,}
\end{equation}
where
\[Pw\coloneqq-\sum_{j,k=1}^d\frac{\partial}{\partial x_j}
\Bigl(a_{j,k}(x)\frac{\partial w(x)}{\partial x_k}\Bigr)+
\sum_{j=1}^db_j(x)\frac{\partial w(x)}{\partial x_j}\text.\]
We impose the mixed boundary condition
\begin{equation}\label{e:bdry}
\begin{cases}
u=0&\text{on $\Gamma_D$,}\\
\displaystyle\sum_{j,k=1}^d\nu_ja_{j,k}\frac{\partial u}{\partial x_k}=0&
\text{on $\Gamma_N$,}
\end{cases}
\end{equation}
where $\nu(x)=(\nu_1(x),\dotsc,\nu_n(x))$ denotes the outer normal at
$x\in\partial\Omega$.

We put $H\coloneqq L_2(\Omega,\C^n)$ and
\[V\coloneqq\Curly{u\in W^{1,2}(\Omega,\C^n):
\text{$u|_{\Gamma_D}=0$ in the sense of traces}}\text,\]
equipping $V$ with the norm of $W^{1,2}(\Omega,\C^n)$.

Our main assumption is as follows.
\begin{description}
\item[\Hypo{C}] G\aa rding's inequality holds, that is,
there are $c,\widetilde c>0$ with
\begin{equation}\label{e:garding}
\Real\sum_{j,k=1}^d\int_\Omega
\Bigl(a_{j,k}(x)\frac{\partial u(x)}{\partial x_k}\Bigr)
\cdot\overline{\frac{\partial u(x)}{\partial x_j}}\dx\ge
c\Norm{\nabla u}_{L_2(\Omega,\C^{dn})}^2-
\widetilde c\Norm u_{L_2(\Omega,\C^n)}^2
\end{equation}
for all $u\in V$. Moreover, at least one of the following holds:
\begin{enumerate}
\item\label{i:selfadjoint}
$a_{j,k}(x)=(a_{k,j}(x))^*$ for almost all $x\in\Omega$ and all
$j,k=1,\dotsc,d$.
\item\label{i:egert}
G\aa rding's inequality~\eqref{e:garding} holds even with $\widetilde c=0$.
Moreover, $\Gamma_D$ satisfies the geometric hypotheses described
in~\cite[Assumption~9.1]{EgertKatoMixed}.
\item\label{i:agranovich}
The matrices $\Real\Bigl(\sum\limits_{j,k=1}^da_{j,k}(x)\xi_j\xi_k\Bigr)$ are
positive definite, uniformly with respect to all $x\in\Omega$ and
$\xi\in\mathbb R^d$ with $\Abs\xi=1$,
$a_{j,k}\in C^1(\overline\Omega,\C^{n\times n})$,
$b_j\in\Lip(\overline\Omega,\C^{n\times n})$ for all
$j,k=1,\dotsc,d$, $\Gamma_D$ and $\Gamma_N$ are open in $\partial\Omega$
domains, and the set~\eqref{e:union} is a Lipschitz manifold of
dimension $d-2$.
\end{enumerate}
\end{description}

For a discussion of various algebraic conditions that are
sufficient for G\aa rding's inequality~\eqref{e:garding},
we refer the reader to e.g.~\cite{McLean,AgranovichGarding}.

By a standard estimate, we obtain from G\aa rding's
inequality~\eqref{e:garding} that the form
\[a(u,v)\coloneqq\int_\Omega
\left(\sum_{j,k=1}^d\Bigl(a_{j,k}(x)\frac{\partial u(x)}{\partial x_k}\Bigr)
\cdot\overline{\frac{\partial v(x)}{\partial x_j}}+
\sum_{j=1}^d\Bigl(b_j(x)\frac{\partial u(x)}{\partial x_j}+Mu(x)\Bigr)\cdot
\overline{v(x)}\right)\dx\]
satisfies~\eqref{e:strongacc} if $M\ge0$ is sufficiently large.
Keeping such an $M$ fixed, we now introduce the function
\[\widetilde f(t,x,u)\coloneqq\widetilde f_0(t,x,u)+Mu\]
and define strong (weak, mild) solutions of~\eqref{e:dgl},~\eqref{e:bdry}
as strong (weak, mild) solutions of~\eqref{e:semilinear} with the
superposition operator~\eqref{e:superpos}. A connection between solutions
of~\eqref{e:dgl},~\eqref{e:bdry} and~\eqref{e:semilinear} is
described in e.g.~\cite[Theorem~4.4.4]{VaethIdeal}.

\begin{theorem}\label{t:dgl}
Assume that hypothesis~\Hypo{C} holds. Then the operator $A$
associated with $a$ is a Kato operator.

Moreover, let $(\widetilde f,\alpha,\gamma)$ satisfy the
hypotheses of Proposition~\ref{p:diffop} part~\iref{i:L2} or~\iref{i:L2a}
(or~\iref{i:W1}), and suppose that there is some $\lambda_0>0$
such that every $\gamma$-weak eigenvalue $\lambda$ of $A-B$
satisfies $\Real\lambda\ge\lambda_0$. Then $u_0=0$ is asymptotically stable
in $H_\alpha=L_2(\Omega,\C^n)$ (or $H_\alpha=W^{1,2}(\Omega,\C^n)$)
in the sense that for every $\varepsilon>0$ there is
$\delta>0$ such that any $\gamma$-mild solution $u\in C([0,\infty),H_\alpha)$
of~\eqref{e:dgl},~\eqref{e:bdry} with
$\Norm{u(0,\parameter)}_{H_\alpha}\le\delta$ satisfies
$\Norm{u(t,\parameter)}_{H_\alpha}\le\varepsilon$ for all $t\ge0$,
and $\Norm{u(t,\parameter)}_{H_\alpha}\to0$ exponentially fast as
$t\to\infty$.

If in addition $\widetilde f$ satisfies the hypothesis of
Proposition~\ref{p:Lipop} part~\iref{i:LL2} (or~\iref{i:LW1}),
then for every $u_0\in H_\alpha$ there is a unique
$\gamma$-mild solution $u\in C([0,\infty),H_\alpha)$
of~\eqref{e:dgl},~\eqref{e:bdry} with $u(0,\cdot)=u_0$.
\end{theorem}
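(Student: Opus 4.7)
The plan is to reduce everything to the abstract Hilbert-space stability results of Section~\ref{s:results} by first establishing that $A$ is a Kato operator. Once this is in hand, Proposition~\ref{p:Ascript} identifies $\Acl=A_{-1/2}$, Propositions~\ref{p:diffop} and~\ref{p:Lipop} supply the concrete analytic hypotheses on the nonlinearity $f$, and Theorems~\ref{t:stability1}--\ref{t:stability2} together with Theorem~\ref{t:existence} deliver the stability and the unique solvability.

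For the Kato property, I would distinguish the three branches of hypothesis~\Hypo{C}. In branch~\iref{i:selfadjoint}, introduce the principal form
\[
a_0(u,v)\coloneqq\int_\Omega\sum_{j,k=1}^d(a_{j,k}(x)\partial_k u(x))\cdot\overline{\partial_j v(x)}\dx+M\Scalr{u,v};
\]
the identity $a_{j,k}^*=a_{k,j}$ forces $\overline{a_0(v,u)}=a_0(u,v)$, so the associated operator $A_0$ is self-adjoint and, by Proposition~\ref{p:powers} together with Theorem~\ref{t:Kato}~\iref{i:arendtiso}, it is a Kato operator with $A_0^{1/2}\colon V\to H$ an isomorphism. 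The drift remainder $c(u,v)=\sum_j\int_\Omega(b_j\partial_j u)\cdot\bar v\dx$ defines a bounded operator $V\to H$, and a form-perturbation argument will be used to transfer the Kato property to $A=A_0+C$. Branch~\iref{i:egert} is precisely the hypothesis of the main theorem of~\cite{EgertKatoMixed}, and branch~\iref{i:agranovich} is covered by~\cite{Agranovich} (cf.\ also~\cite{Shamin}); both yield the Kato property directly.

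For the stability claim, I would first note that, $\Omega$ being a bounded Lipschitz domain, the Rellich--Kondrachov theorem gives a compact embedding $V\subset\subset H$, whence $A$ has compact resolvent and all embeddings $H_\beta\subseteq H_\delta$ for $\beta>\delta$ are compact. The hypotheses on $\widetilde f$ are calibrated so that Proposition~\ref{p:diffop} verifies~\Hypo{B_\gamma} at the equilibrium $u_0=0$ with linearization $B$ from~\eqref{e:Bmultiplier}. Theorem~\ref{t:stability2} then asserts that $\sigma(A_{-\gamma}-B)$ consists only of $\gamma$-weak eigenvalues of $A-B$, each satisfying $\Real\lambda\ge\lambda_0$ by hypothesis. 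Applying Theorem~\ref{t:stability1} with any $\lambda_0'\in(0,\lambda_0)$ produces constants $M_1,M_2>0$ such that every $\gamma$-mild solution starting $M_1$-close to $0$ in $H_\alpha$ satisfies~\eqref{e:asymptest}; the Lyapunov formulation with given $\varepsilon>0$ then follows by choosing $\delta\coloneqq\min\{M_1,\varepsilon/M_2\}$.

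For the final existence and uniqueness statement, Proposition~\ref{p:Lipop} shows that $f\colon[0,\infty)\times H_\alpha\to H_{-\gamma}$ satisfies the right local H\"{o}lder--Lipschitz condition~\eqref{e:goodLip} and is left-locally bounded into $H_{-\gamma}$; Theorem~\ref{t:existence} then produces a unique maximal $\gamma$-weak solution, and the exponential $H_\alpha$-bound from the previous step excludes the blow-up alternative for data in the stability neighbourhood, giving the global solution on $[0,\infty)$--this is precisely the content of the last part of Theorem~\ref{t:stability1}. The hard part will be branch~\iref{i:selfadjoint} of the Kato step: the equality $D(A^{1/2})=V$ is notoriously delicate under perturbation, and one must verify carefully that the non-symmetric drift, although only $V\times H$-continuous and hence of lower form order than the principal term, does not break it; once Kato is in hand, the remaining two steps are a routine assembly of the abstract framework of Sections~\ref{s:results} and~\ref{s:Hilbert}.
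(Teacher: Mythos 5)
Your overall architecture coincides with the paper's: prove that $A$ is a Kato operator case by case, verify \Hypo{B_\gamma} at $u_0=0$ via Proposition~\ref{p:diffop}, note that the bounded Lipschitz domain gives a compact embedding $V\subseteq H$ and hence a compact resolvent so that Theorem~\ref{t:stability2} applies and $\sigma(A_{-\gamma}-B)$ consists only of $\gamma$-weak eigenvalues, and then obtain the $\varepsilon$--$\delta$ statement from~\eqref{e:asymptest} and the global existence/uniqueness from Proposition~\ref{p:Lipop}, Theorem~\ref{t:existence}, and the last part of Theorem~\ref{t:stability1}. Those two steps are assembled correctly and are exactly how the paper argues.

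The genuine gap is the one you flag yourself: the drift term in the Kato step. You announce that ``a form-perturbation argument will be used to transfer the Kato property to $A=A_0+C$'' and then concede that this is notoriously delicate, without supplying the argument; since branch~\iref{i:selfadjoint} of \Hypo{C} imposes symmetry only on the principal part $a_{j,k}$, the full form $a$ is \emph{not} symmetric once $b_j\ne0$, so Proposition~\ref{p:powers} alone does not give $H_{1/2}\cong V$ for $A$ itself, and the same issue arises in branches~\iref{i:egert} and~\iref{i:agranovich} insofar as the cited results are applied to the divergence-form part. The paper closes this with a short interpolation argument that replaces any delicate perturbation theory for the square root: because the drift defines a bounded map $V\to H$, the domain $H_1=D(A)$ and its graph-norm topology do not depend on the $b_j$ or on $M$; by Proposition~\ref{p:powers} every operator associated with a strongly accretive form has the uniform bound $\Norm{A^{is}}\le e^{\pi\Abs s/2}$, so the reiteration formula~\eqref{e:reiterate} yields $H_{1/2}\cong[H_0,H_1]_{1/2}$, and the right-hand side depends only on the pair $(H,H_1)$. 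Hence $H_{1/2}$ is unchanged when the drift is added, and $H_{1/2}\cong V$ for the full operator follows from the drift-free case, which in branch~\iref{i:selfadjoint} is genuinely symmetric (Proposition~\ref{p:powers} or Theorem~\ref{t:quasiKato}) and in the other branches is covered by~\cite{EgertKatoMixed} and~\cite{Agranovich} (with~\cite{AgranovichMixed}). Without this observation, or an equivalent substitute, your proof of the first assertion of the theorem is incomplete.
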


\begin{remark}
We emphasize that under the additional assumptions mentioned
in Proposition~\ref{p:weakev}, it suffices to consider eigenvalues of
$A-B$ instead of $\gamma$-weak eigenvalues. Note that $A-B$ is actually
independent of $M$ (because the terms with $M$ cancel).
\end{remark}

\begin{proof}
Assume first that $b_1=\dotsm=b_d=0$. Then $A$ is a Kato operator. Indeed,
in case~\Hypo{C}\iref{i:selfadjoint}, this follows from
Proposition~\ref{p:Aassoc} or by Theorem~\ref{t:quasiKato},
because $a$ is symmetric. In case~\Hypo{C}\iref{i:egert}, this follows
from the main result of~\cite{EgertKatoMixed}, and in
case~\Hypo{C}\iref{i:agranovich} this follows from the main result
of~\cite{Agranovich} in view of~\cite{AgranovichMixed}.

Since neither the space $H_1$ nor its topology depends on $M$ or $b_j$,
we obtain from Proposition~\ref{p:powers} that also the space
$H_{1/2}\cong[H,H_1]_{1/2}$ does not depend on $M$ or $b_j$, and so we obtain
from the special case $b_1=\dotsm=b_d=0$ also in the general case
that $A$ is a Kato operator.

Note that if the hypothesis of Proposition~\ref{p:diffop}\iref{i:L2} is
satisfied, then also the hypothesis of Proposition~\ref{p:weakev}\iref{i:rgood}
is satisfied. Hence, the assertion follows from Theorem~\ref{t:stability2}.
\end{proof}

\begin{remark}\label{r:Cembedt}
In Theorem~\ref{t:dgl}, the hypotheses of Proposition~\ref{p:diffop}
part~\iref{i:L2} or~\iref{i:L2a} can also be replaced by the hypothesis
of Remark~\ref{r:Cembed}.
\end{remark}

\begin{example}\label{x:hyperbola}
Let $\Omega\subseteq\R^d$ be bounded with a Lipschitz boundary,
$\Gamma_D,\Gamma_N\subseteq\partial\Omega$ be measurable with~\eqref{e:union}.
Let $f_1,f_2\colon\C^2\to\C$ be continuous with $f_i(0)=0$,
and suppose that there are $L\ge0$ and $\rho>0$ with
\begin{equation}\label{e:fuvLip}
\Abs{f_i(u)-f_i(v)}\le L\bigl(1+\Abs{u}+\Abs{v}\bigr)^\rho
\Abs{u-v}
\end{equation}
for all $u\in\C^2$. Assume that $(b_{i1},b_{i2})=f_i'(0)$ exist for $i=1,2$,
are real, and satisfy the sign conditions
\[b_{11}>0\text,\quad b_{11}+b_{22}<0\text,\quad b_{11}b_{22}-b_{12}b_{21}>0
\text.\]
For $d_1,d_2>0$, we consider the reaction-diffusion system
\begin{equation}\label{e:rctdiff}
\frac{\partial u_j}{\partial t}=d_j\Delta u_j+f_j(u_1,u_2)\quad
\text{on $\Omega$ for $j=1,2$,}
\end{equation}
with mixed boundary conditions (for $u=(u_1,u_2)$)
\begin{equation}\label{e:bdrys}
\text{$u=0$ on $\Gamma_D$,}\quad
\text{$\frac{\partial u}{\partial\nu}=0$ on $\Gamma_N$.}
\end{equation}
Let $\kappa_k>0$ $(k=1,2,\dotsc)$ denote the nonzero eigenvalues of
$\Delta$ with boundary conditions~\eqref{e:bdrys}; if $\Gamma_D$ is a null set,
do \emph{not} include the trivial eigenvalue $\kappa_0=0$ into this sequence.
Suppose $(d_1,d_2)$ lies to the right/under the envelope of the hyperbolas
\[C_k=\Curly{(d_1,d_2):\text{$d_1,d_2>0$ and
$(\kappa_kd_1-b_{11})(\kappa_kd_2-b_{22})=b_{12}b_{21}$}}\text,\]
that is, $(d_1,d_2)$ belongs to
\begin{equation}\label{e:DS}
\bigcap_{k=1}^\infty\Bigl\{(d_1,d_2):\text{$d_1\ge\kappa_k^{-1}b_{11}$ or
$d_2<\frac{\kappa_k^{-2}b_{12}b_{21}}{d_1-\kappa_k^{-1}b_{11}}+
\frac{b_{22}}{\kappa_k}$}\Bigr\}\text,
\end{equation}
see Figure~\ref{f:hyperbolas}.
\newcommand{\myhyperbola}[1]{-0.3 #1 dup mul mul x #1 sub div
 -0.05 #1 mul add}%
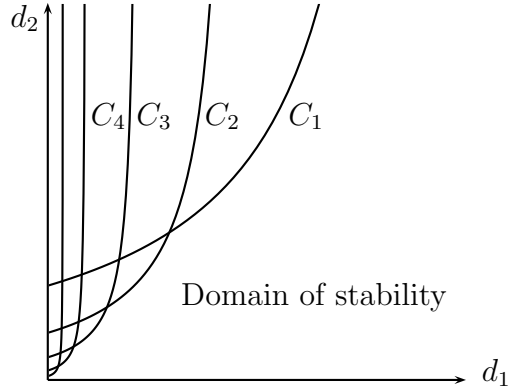
\begin{figure}[ht]
\centering
\begin{pspicture}(-1,0)(6,5.5)
\rput(5.9,.1){$d_1$}\rput(-.3,4.8){$d_2$}
\psline{->}(0,0)(0,5)\psline{->}(0,0)(5.5,0)
\begin{psclip}{\psframe[linestyle=none](0,0)(5.5,5)}
\psplot{0}{.199}{\myhyperbola{0.2}}
\psplot{0}{.49}{\myhyperbola{0.5}}\rput(.8,3.5){$C_4$}
\psplot{0}{1.199}{\myhyperbola{1.2}}\rput(1.4,3.5){$C_3$}
\psplot{0}{2.499}{\myhyperbola{2.5}}\rput(2.3,3.5){$C_2$}
\psplot{0}{4.99}{\myhyperbola{5}}\rput(3.4,3.5){$C_1$}
\end{psclip}
\rput(3.5,1.1){Domain of stability}
\end{pspicture}
\caption{The hyperbolas $C_k$}\label{f:hyperbolas}
\end{figure}
Then the following holds in each of the following two cases.
\begin{enumerate}
\item $H_\alpha=L_2(\Omega,\C^2)$ and one of the following holds:
\begin{enumerate}
\item\label{ii:reactL2}
$\gamma\in[1/2,1)$ and either $d=1$, $\rho\le1$,
or $d=2$, $\rho<1$, or $d\ge3$, $\rho\le2/d$;
\item\label{ii:reactLp}
$D(A)$ is continuously embedded into $L_p(\Omega,\C)$,
$\rho<1-\frac2p$, and $\gamma\in(0,1)$ is sufficiently large;
\item\label{ii:reactHgamma}
$\rho\le1$, $\gamma\in(1/2,1)$, and $H_\gamma$ is continuously embedded into
$L_\infty(\Omega,\C)$;
\item\label{ii:reactW22}
$D(A)$ is continuously embedded into $W^{2,2}(\Omega,\C)$, and
either $d\le3$, $\rho\le1$, $\gamma\in(d/4,1)$,
or $d\ge4$, $\rho<4/d$, and $\gamma\in(0,1)$ is sufficiently large.
\end{enumerate}
\item $H_\alpha=W^{1,2}(\Omega,\C^2)$ and one of the following holds:
\begin{enumerate}
\item $d\le2$, $\rho>0$, $\gamma\in[0,1/2)$;
\item $d\ge3$, $\rho<4/(d-2)$, $\gamma\in[\max\Curly{0,\gamma_0},1/2)$,
where $\gamma_0$ is defined in~\eqref{e:gamma0} with $\sigma=\rho+1$.
\end{enumerate}
\end{enumerate}
For each $\varepsilon>0$ there is $\delta>0$ such that for each
$u_0\in H_\alpha$ with $\Norm{u_0}\le\delta$ there is a unique
$\gamma$-mild solution $u\in C([0,\infty),H_\alpha)$
of~\eqref{e:rctdiff},~\eqref{e:bdrys} with $u(0,\cdot)=u_0$,
$\Norm{u(t,\parameter)}_{H_\alpha}\le\varepsilon$ for all $t>0$
and $\Norm{u(t,\parameter)}_{H_\alpha}\to0$ exponentially as
$t\to\infty$.

We first note that~\iiref{ii:reactW22} is actually a special case
of~\iiref{ii:reactLp} and~\iref{ii:reactHgamma} by the Sobolev
embedding theorems and~\cite[Theorem~1.6.1]{Henry}, respectively.
Since $f_i$ is independent of $x$ and $t$, hypothesis~\eqref{e:g0est}
follows with $\sigma=\rho+1$ from~\eqref{e:fuvLip} and from the definition
of $f_i'$. Note also that the symmetry of $A$ implies
$D(A)=D(A^*)$ and $H_\gamma=\Hcc{\gamma}$.
The existence and uniqueness assertion follows from Proposition~\ref{p:Lipop}
or from Remark~\ref{r:CembedLip} in case~\iref{ii:reactHgamma}.
For the stability assertion, we apply Theorem~\ref{t:dgl}
or Remark~\ref{r:Cembedt} in case~\iref{ii:reactHgamma} with $r=\infty$
and $\sigma=1+r$. In view of Proposition~\ref{p:weakev},
it thus suffices to verify that there is $\lambda_0>0$ such that every
eigenvalue $\lambda$ of $A-B$ satisfies $\Real\lambda\ge\lambda_0$.
Under condition~\eqref{e:DS} the latter was verified
in~\cite{VaethQuittnerProc}. It can be shown by a similar calculation
that if $d_i>0$ violate~\eqref{e:DS} then there is an eigenvalue $\lambda$
of $A-B$ with $\Real\lambda\le0$ ($\lambda=0$ if
$(d_1,d_2)\in\bigcup_{k=1}^\infty C_k$). In this sense, the domain of stability
sketched in Figure~\ref{f:hyperbolas} is maximal.
\end{example}

Note that~\iiref{ii:reactW22} involves a strictly weaker requirement
concerning $\rho$ than~\iref{ii:reactL2} for every $d\ge2$. The embedding
required for~\iiref{ii:reactW22} holds in case $\Gamma_D=\emptyset$ or
$\Gamma_N=\emptyset$ if $\partial\Omega$ is sufficiently smooth.

The result obtained in~\cite{VaethQuittnerProc} concerning
Example~\ref{x:hyperbola} did not cover the case
$H_\alpha=L_2(\Omega,\C^2)$. Moreover, even in the case
$H_\alpha=W^{1,2}(\Omega,\C^2)$ and $d\ge3$,
the result in~\cite{VaethQuittnerProc} essentially needed
the more restrictive hypothesis $\rho\le2/(d-2)$ which is
(almost) by the factor~$2$ worse than our above requirement for that case.

\appendix
\section{On the Characterization of Kato Operators}

As an application of Theorem~\ref{t:Kato}, we obtain now a sufficient
criterion for Kato operators. In fact, in the following we give a necessary and
sufficient condition under which the particular scalar product~\eqref{e:bM}
is $A$-Kato.

Throughout this section, we consider the setting of Section~\ref{s:Hilbert}.
Recall that Proposition~\ref{p:Aassoc} implies in particular that
$A^{-1}\colon H\to H$ is bounded. It is well known (see
e.g.~\cite[Theorem~III.5.30]{Kato}) that this implies that also
$(A^*)^{-1}\colon H\to H$ exists and is bounded and is actually the
(bounded) Hilbert-space adjoint $(A^{-1})^*$, i.e.
\begin{equation}\label{e:Aminus}
(A^*)^{-1}=(A^{-1})^*\text.
\end{equation}

\begin{definition}
We call $A$ \emph{quasi-symmetric} if there are constants
$\alpha>-1$ and $\beta,M\ge0$ with
\begin{equation}\label{e:halfsymmAs}
\Real\Scalr{(A^*)^{-1}(Au+Mu),u}\ge\alpha\Abs u^2\quad\text{and}\quad
\Abs{(A^*)^{-1}Au}\le\beta\Abs u\quad\text{for all $u\in D(A)$.}
\end{equation}
If $M\ge0$ is given, we call $A$ \emph{$M$-quasi-symmetric} if there are
constants $\alpha>-1$, $\beta\ge0$ with~\eqref{e:halfsymmAs}.
\end{definition}

\begin{remark}
The larger $M$ is, the less restrictive condition~\eqref{e:halfsymmAs}
becomes. Indeed,~\eqref{e:Aminus} implies
\begin{equation}\label{e:goodpart}
\begin{aligned}
\Real\Scalr{(A^*)^{-1}u,u}&=\Real\Scalr{u,A^{-1}u}=
\Real\Scalr{A(A^{-1}u),A^{-1}u}\ge c\Norm{A^{-1}u}^2\ge0
\qquad\text{for all $u\in H$.}
\end{aligned}
\end{equation}
\end{remark}

\begin{remark}
If $A$ is symmetric, then~\eqref{e:goodpart} implies that $A$ is
$M$-quasi-symmetric with every $M\ge0$.
\end{remark}

Roughly speaking, estimates~\eqref{e:halfsymmAs} mean indeed that
$A$ is quantitatively almost symmetric in the sense that $(A^*)^{-1}A$
does not differ too much from the identity in a quantitative manner,
namely that it is ``almost'' accretive and bounded in $H$
(on the subspace $D(A)$).
The restriction $\alpha>-1$ may appear very strange at a first glance,
but it is the correct hypothesis for the following result:

\begin{proposition}\label{p:halfsymmetric}
For every $M\ge0$ the following assertions are equivalent.
\begin{enumerate}
\item $A$ is $M$-quasi-symmetric.
\item There are $\alpha>-1$ and $\widetilde\beta>0$ with
\begin{equation}\label{e:halfsymmA}
\Real\Scalr{Au+Mu,A^{-1}u}\ge\alpha\Abs u^2\quad\text{and}\quad
\Abs{\Scalr{Au,A^{-1}u}}\le\widetilde\beta\Abs u^2
\end{equation}
for all $u\in D(A)$.
\item The formula~\eqref{e:bM} defines an $A$-Kato scalar product on $V$.
\end{enumerate}
The relation of the largest possible constants $\alpha$ in~\eqref{e:halfsymmAs}
and~\eqref{e:halfsymmA} and $c_1$ in Proposition~\ref{p:a0} is given by
$2c_1=1+\alpha$.
\end{proposition}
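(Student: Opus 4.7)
The plan is to prove (1) $\Leftrightarrow$ (2) $\Leftrightarrow$ (3), arranging the argument so that the constant relation $2c_1 = 1+\alpha$ falls out directly from one explicit computation of $b_M(u,A^{-1}u)$ for $u \in D(A)$. The two main tools will be the identity $(A^*)^{-1} = (A^{-1})^*$ from~\eqref{e:Aminus} for (1) $\Leftrightarrow$ (2), and the sesquilinear structure~\eqref{e:bM} of $b_M$ together with the defining identity $a(v,\varphi) = \Scalr{Av,\varphi}$ of $A$ for (2) $\Leftrightarrow$ (3).

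For (1) $\Leftrightarrow$ (2), I would first note that $(A^*)^{-1} = (A^{-1})^*$ gives, for all $u \in D(A)$ and $v \in H$,
\[
\Scalr{(A^*)^{-1}v,u} = \Scalr{v,A^{-1}u}.
\]
Plugging in $v = Au+Mu$ identifies the two accretivity inequalities term-by-term with the same $\alpha$, and plugging in $v = Au$ gives $\Scalr{(A^*)^{-1}Au,u} = \Scalr{Au,A^{-1}u}$. Cauchy--Schwarz then yields (1) $\Rightarrow$ (2). For the converse, assuming the quadratic form bound $\Abs{\Scalr{Au,A^{-1}u}} \le \widetilde\beta\Abs u^2$, I would apply polarization to the sesquilinear form $T(u,v) \coloneqq \Scalr{(A^*)^{-1}Au,v}$ on $D(A)\times D(A)$ to obtain $\Abs{T(u,v)} \le 2\widetilde\beta\Abs u\Abs v$, and then extend in $v$ by density of $D(A)$ in $H$ to conclude $\Abs{(A^*)^{-1}Au} \le 2\widetilde\beta\Abs u$, recovering (1).

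For (2) $\Leftrightarrow$ (3) and the constant correspondence, the heart of the matter is the identity obtained for $u \in D(A)$ (so that $A^{-1}u \in D(A) \subseteq V$ as well) by using $a(u,A^{-1}u) = \Scalr{Au,A^{-1}u}$ and $a(A^{-1}u,u) = \Scalr{A(A^{-1}u),u} = \Abs u^2$:
\[
b_M(u,A^{-1}u) = \tfrac12\bigl(\Scalr{Au+Mu,A^{-1}u} + \Abs u^2\bigr).
\]
Taking real parts gives $\Real b_M(u,A^{-1}u) = \tfrac12\bigl(\Real\Scalr{Au+Mu,A^{-1}u} + \Abs u^2\bigr)$, so the accretivity bound $\Real b_M(u,A^{-1}u) \ge c_1\Abs u^2$ is precisely equivalent to $\Real\Scalr{Au+Mu,A^{-1}u} \ge (2c_1-1)\Abs u^2$, pinning $\alpha = 2c_1-1$ and matching $\alpha > -1$ to $c_1 > 0$. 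The modulus bound $\Abs{b_M(u,A^{-1}u)} \le c_3\Abs u^2$ translates analogously to the $\widetilde\beta$-bound of (2), once one absorbs the $\tfrac12\Abs u^2$ term and the elementary estimate $\Abs{\Scalr{Mu,A^{-1}u}} \le M\Norm{A^{-1}}_{H\to H}\Abs u^2$. Extension of these pointwise inequalities from $D(A)$ to all of $V$, which is permitted by the definition of an $A$-Kato scalar product (cf.~Proposition~\ref{p:a0}), is supplied by the density of $D(A)$ in $V$ (Proposition~\ref{p:Aassoc}) together with the $V$-continuity of each side, which is built in via~\eqref{e:continuous} and the boundedness of $A^{-1}\colon V\to V$.

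The main obstacle I anticipate is purely bookkeeping: tracking the constants through the polarization step of (1) $\Leftrightarrow$ (2), where one loses a universal factor of at most~$2$, and confirming that the additive $\tfrac12\Abs u^2$ in the identity for $b_M(u,A^{-1}u)$ is exactly what shifts $c_1$ to $(1+\alpha)/2$. No ingredient deeper than~\eqref{e:Aminus}, the defining relation of $A$ in terms of $a$, and Proposition~\ref{p:a0} should be needed.
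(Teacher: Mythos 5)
Your proposal is correct and follows essentially the same route as the paper: the key identity $2b_M(u,A^{-1}v)=\Scalr{(A^*)^{-1}(Au+Mu),v}+\Scalr{u,v}$ (which the paper computes in two variables at once, reading off both \iref{i:bbdd}$\iff$(1) and \iref{i:bsbdd}$\iff$(2) and then citing Proposition~\ref{p:a0}) is exactly your diagonal computation plus your adjoint identity, and the constant relation $2c_1=1+\alpha$ emerges the same way. Your separate polarization argument for (1)$\iff$(2) merely re-does work already contained in the \iref{i:bbdd}$\iff$\iref{i:bsbdd} equivalence of Proposition~\ref{p:a0}, so nothing is gained or lost.
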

\begin{proof}
For every $u,v\in D(A)$, we obtain from~\eqref{e:bM}, the definition of $A$,
and~\eqref{e:Aminus} that
\begin{equation}\label{e:bMcalc}
\begin{gathered}
2b_M(u,A^{-1}v)=a(u,A^{-1}v)+\overline{a(A^{-1}v,u)}+M\cdot\Scalr{u,A^{-1}v}\\
=\Scalr{Au+Mu,A^{-1}v}+\overline{\Scalr{v,u}}
=\Scalr{(A^*)^{-1}(Au+Mu),v}+\Scalr{u,v}\text.
\end{gathered}
\end{equation}
Hence, if~\eqref{e:bc1c2} or~\eqref{e:bc1c3} hold, then~\eqref{e:halfsymmAs}
or~\eqref{e:halfsymmA} hold with $\alpha\coloneqq2c_1-1$ and some
$\beta,\widetilde\beta>0$, respectively. Conversely, if~\eqref{e:halfsymmAs}
or~\eqref{e:halfsymmA} holds, then~\eqref{e:bMcalc} shows
that~\eqref{e:bc1c2} or~\eqref{e:bc1c3} hold with
$c_1\coloneqq(\alpha+1)/2$ and some $c_2,c_3>0$, respectively.
\end{proof}

\begin{theorem}\label{t:quasiKato}
If $A$ is quasi-symmetric, then $A$ is a Kato operator.
\end{theorem}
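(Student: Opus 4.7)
The plan is to chain together the two heavy pieces of machinery already established, namely Proposition~\ref{p:halfsymmetric} and Theorem~\ref{t:Kato}, so that essentially no new computation is needed. The route will be: quasi-symmetric $\Rightarrow$ the concrete scalar product $b_M$ of \eqref{e:bM} is an $A$-Kato scalar product on $V$ $\Rightarrow$ $A$ is a Kato operator.

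First, unpacking the hypothesis, if $A$ is quasi-symmetric then by definition there exists some $M \ge 0$ for which $A$ is $M$-quasi-symmetric, i.e.\ there are $\alpha>-1$ and $\beta\ge0$ making \eqref{e:halfsymmAs} hold. We may (and do) fix such an $M$. Recall from Remark~\ref{r:bM} that for every $M\ge0$ the symmetric form $b_M$ of \eqref{e:bM} is automatically a scalar product on $V$ generating a norm equivalent to $\Norm\parameter$, so the only nontrivial point is that $b_M$ be \emph{$A$-Kato}, i.e.\ that it satisfy the relevant inequalities \eqref{e:bc1c2} (or equivalently \eqref{e:bc1c3}) with a \emph{positive} constant $c_1>0$.

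This is precisely the content of Proposition~\ref{p:halfsymmetric}: the equivalence between the $M$-quasi-symmetry of $A$ and the fact that $b_M$ is an $A$-Kato scalar product on $V$ is stated there, together with the exact relation $2c_1 = 1+\alpha$ between the positivity constants. Since our $\alpha>-1$, we get $c_1>0$, so $b_M$ really is an $A$-Kato scalar product.

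Finally, an $A$-Kato scalar product on $V$ exists, which is assertion~\iref{i:Aprod} of Theorem~\ref{t:Kato}. By the equivalence \iref{i:AKato}$\iff$\iref{i:Aprod} of that theorem, this is equivalent to $A$ being a Kato operator, completing the proof. The only place where there is any content at all is the step that quotes Proposition~\ref{p:halfsymmetric}, and even there the content is just the algebraic identity \eqref{e:bMcalc} between $b_M(u,A^{-1}v)$ and $\Scalr{(A^*)^{-1}(Au+Mu),v}+\Scalr{u,v}$ that links the two formulations. There is no substantial obstacle; the result is essentially a corollary of the characterizations already proved.
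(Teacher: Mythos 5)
Your proposal is correct and follows exactly the same route as the paper's own (one-line) proof: Proposition~\ref{p:halfsymmetric} shows that quasi-symmetry is equivalent to $b_M$ being an $A$-Kato scalar product, and then assertion~\iref{i:Aprod} of Theorem~\ref{t:Kato} gives that $A$ is a Kato operator. Your write-up merely spells out the details the paper leaves implicit.
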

\begin{proof}
In view of Proposition~\ref{p:halfsymmetric}, the assertion follows
from Theorem~\ref{t:Kato}.
\end{proof}


\begin{thebibliography}{10}

\bibitem{AgranovichMixed}
Agranovich, M.~S., \emph{Mixed problems on a {Lipschitz} domain for strongly
  elliptic second-order systems}, Funkcional. Anal. i Prilo\v{z}en.
  \textbf{45} (2011), no.~2, 1--22, Engl. transl.: Funct. Anal. Appl.
  \textbf{45} (2011), no. 2, 81--98.

\bibitem{AgranovichGarding}
\bysame, \emph{Spectral problems in {Lipschitz} domains}, Sovrem. Mat. Fundam.
  Napravl. \textbf{39} (2011), 11--35, Engl. transl.: J. Math. Sci.
  \textbf{190} (2013), no. 1, 8--33.

\bibitem{Agranovich}
Agranovich, M.~S. and Selitskii, A.~M., \emph{Fractional powers of operators
  corresponding to coercive problems in Lipschitz domains}, Funkcional. Anal.
  i Prilo\v{z}en. \textbf{47} (2013), no.~2, 2--17, Engl. transl.: Funct.
  Anal. Appl. \textbf{47} (2013), no. 2, 83--95.

\bibitem{AmannExtrapol}
Amann, H., \emph{Nonhomogeneous linear and quasilinear elliptic and parabolic
  boundary value problems}, Function Spaces, Differential Operators and
  Nonlinear Analysis (Schmeisser, H.-J. and Triebel, H., eds.), Teubner,
  Springer, Leipzig, Wiesbaden, 1993, 9--126.

\bibitem{Amann}
\bysame, \emph{Linear and quasilinear parabolic problems}, vol.~I,
  Birkh\"{a}user, Basel, Boston, Berlin, 1995.

\bibitem{VaethAnalysis}
Appell, J. and V\"{a}th, M., \emph{Elemente der Funktionalanalysis},
  Vieweg \& {Sohn}, Braunschweig, Wiesbaden, 2005.

\bibitem{ArendtSurvey}
Arendt, W., \emph{Semigroups and evolution equations: Functional calculus,
  regularity and kernel estimates}, Handbook of Differential Equations.
  Evolutionary Equations (Dafermos, C.~M. and Feireisl, E., eds.), vol.~1,
  Elsevier, Amsterdam, 2004, 1--85.

\bibitem{ArendtPerturb}
Arendt, W. and Batty, C. J.~K., \emph{Forms, functional calculus, cosine
  functions and perturbations}, Perspectives in Operator Theory. Papers of the
  Workshop on Operator Theory, Warsaw, Poland, April 19--May 3, 2004
  (Warsaw) (Arendt, W., Batty, C. J.~K., Mbekhta, M., Tomilov, Y., and
  Zem\'{a}nek, J., eds.), Banach Center Publ., vol.~75, Polish Acad Sci., 2007,
  17--38.

\bibitem{EgertKatoMixed}
Egert, M., Haller-Dintelmann, R., and Tolksdorf, P., \emph{The {Kato}
  squareroot problem for mixed boundary conditions}, J. Funct. Anal.
  \textbf{267} (2014), no.~5, 1419--1461.

\bibitem{FiedlerVishik}
Fiedler, B. and Vishik, M.~I., \emph{Quantitative homogenization of analytic
  semigroups and reaction-diffusion equations with Diophantine spatial
  frequencies}, Adv. Differential Equations \textbf{6} (2001), no.~11,
  1377--1408.

\bibitem{HallerRehberg}
Haller-Dintelmann, R. and Rehberg, J., \emph{Maximal parabolic regularity for
  divergence operators including mixed boundary conditions}, J. Differential
  Equations \textbf{247} (2009), 1354--1396.

\bibitem{Henry}
Henry, D., \emph{Geometric theory of semilinear parabolic equations}, Lect.
  Notes Math., no. 840, Springer, Berlin, New York, 1981.

\bibitem{KatoPowersI}
Kato, T., \emph{Fractional powers of dissipative operators}, J. Math. Soc.
  Japan \textbf{13} (1961), 246--274.

\bibitem{KatoPowersII}
\bysame, \emph{Fractional powers of dissipative operators II}, J. Math. Soc.
  Japan \textbf{14} (1962), 242--248.

\bibitem{Kato}
\bysame, \emph{Perturbation theory for linear operators}, Springer, New York,
  1966.

\bibitem{VaethQuittner}
Kim, I.-S. and V\"{a}th, M., \emph{The Krasnosel'ski\u{\i}-Quittner formula
  and instability of a reaction-diffusion system with unilateral obstacles},
  Dynamics of Partial Differential Equations \textbf{11} (2014), no.~3,
  229--250.

\bibitem{KomatsuII}
Komatsu, H., \emph{Fractional powers of operators, II Interpolation
  spaces}, Pacific J. Math. \textbf{21} (1967), no.~1, 89--111.

\bibitem{KrasTop}
Krasnoselski\u{\i}, M.~A., \emph{Topological methods in the theory of nonlinear
  integral equations \textup{in Russian}}, Gostehizdat, Moscow, 1956, Engl.
  transl.: Pergamon Press, Oxford 1964.

\bibitem{Lunardi}
Lunardi, A., \emph{Analytic semigoups and optimal regularity in parabolic
  problems}, Birkh\"{a}user, Basel, Boston, Berlin, 1994.

\bibitem{MaziaSob}
Mazja, V.~G., \emph{Sobolev spaces}, Springer, Berlin, Heidelberg, 1985.

\bibitem{McLean}
McLean, W., \emph{Strongly elliptic systems and boundary integral equations},
  Univ. Press, Cambridge, 2000.

\bibitem{Ouhabaz}
Ouhabaz, E.~M., \emph{Analysis of heat equations on domains}, Princeton Univ.
  Press, Princeton, Oxford, 2005.

\bibitem{Pazy}
Pazy, A., \emph{Semigroups of linear operators and applications to partial
  differential equations}, Springer, New York, Berlin, Heidelberg, 1992.

\bibitem{Shamin}
Shamin, R.~V., \emph{Spaces of initial data for differential equations in
  Hilbert spaces and the Kato problem}, {Ulmer Seminare. Funktionalanalysis
  und Differentialgleichungen.} \textbf{7} (2002), 375--388.

\bibitem{Smoller}
Smoller, J., \emph{Shock waves and reaction diffusion equations}, Springer, New
  York, 1983.

\bibitem{Triebel}
Triebel, H., \emph{Interpolation theory, function spaces, differential
  operators}, North-Holland, Amsterdam, New York, Oxford, 1978.

\bibitem{VaethIdeal}
V\"{a}th, M., \emph{Ideal spaces}, Lect. Notes Math., no. 1664, Springer,
  Berlin, Heidelberg, 1997.

\bibitem{VaethSupAtomI}
\bysame, \emph{Continuity and differentiability of multivalued superposition
  operators with atoms and parameters I}, J. Anal. Appl. \textbf{31} (2012),
  93--124.

\bibitem{VaethQuittnerProc}
\bysame, \emph{Instability of Turing type for a reaction-diffusion system
  with unilateral obstacles modeled by variational inequalities}, Math. Bohem.
  (Prague, 2013), vol. 139, Proceedings of Equadiff 13, no.~2, 2014, 195--211.

\end{thebibliography}
\end{document}